\theoremstyle{plain}
\newtheorem{thm}{Theorem}[section]
\newtheorem{cor}[thm]{Corollary}
\newtheorem*{thm*}{Theorem}
\newtheorem*{cor*}{Corollary}
\newtheorem{lem}[thm]{Lemma}
\newtheorem{prop}[thm]{Proposition}
\theoremstyle{definition}
\newtheorem{dfn}[thm]{Definition}
\newtheorem{ex}[thm]{Example}
\newtheorem{ques}[thm]{Question}
\newtheorem{cvn}[thm]{Convention}
\newtheorem{rem}[thm]{Remark}
\newcommand{\NN}{\mathbb{N}}
\newcommand{\ZZ}{\mathbb{Z}}
\newcommand{\PP}{\mathbb{P}}
\def\Aut{\operatorname{Aut}}
\def\Coker{\operatorname{Coker}}
\def\dim{\operatorname{dim}}
\def\Ext{\operatorname{Ext}}
\def\gldim{\operatorname{gldim}}
\def\Hom{\operatorname{Hom}}
\def\id{\operatorname{id}}
\def\Im{\operatorname{Im}}
\def\injdim{\operatorname{injdim}}
\def\Ker{\operatorname{Ker}}
\def\Proj{\operatorname{Proj}}
\def\mod{\operatorname{\mathsf{mod}}}
\def\Mod{\operatorname{\mathsf{Mod}}}
\def\grmod{\operatorname{\mathsf{grmod}}}
\def\GrMod{\operatorname{\mathsf{GrMod}}}
\def\BiGrMod{\operatorname{\mathsf{BiGrMod}}}
\def\QBiGr{\operatorname{\mathsf{QBiGr}}}
\def\Tors{\operatorname{\mathsf{Tors}}}
\def\tors{\operatorname{\mathsf{tors}}}
\def\CM{\operatorname{\mathsf{CM}}}
\def\QGr{\operatorname{\mathsf{QGr}}}
\def\qgr{\operatorname{\mathsf{qgr}}}
\def\QMod{\operatorname{\mathsf{QMod}}}
\def\qmod{\operatorname{\mathsf{qmod}}}
\def\uCM{\operatorname{\underline{\mathsf{CM}}}}
\def\LTors{\operatorname{\mathsf{LTors}}}
\def\lgrmod{\operatorname{\mathsf{lgrmod}}}
\def\lltors{\operatorname{\mathsf{lltors}}}
\def\qlgr{\operatorname{\mathsf{qlgr}}}
\def\BiLTors{\operatorname{\mathsf{BiLTors}}}
\def\lbigrmod{\operatorname{\mathsf{lbigrmod}}}
\def\lbiltors{\operatorname{\mathsf{lbiltors}}}
\def\qlbigr{\operatorname{\mathsf{qlbigr}}}
\def\Db{\mathsf{D^b}}
\def\L{\operatorname{\mathsf{L}}}
\def\l{\operatorname{\mathsf{l}}}
\def\<{\langle}
\def\>{\rangle}
\numberwithin{equation}{section}
\begin{document}

\title[Twisted Segre products]
{Twisted Segre products}

\author{Ji-Wei He}
\address{School of Mathematics,
Hangzhou Normal University,
Hangzhou, Zhejiang 311121, China}
\email{jwhe@hznu.edu.cn}

\author{Kenta Ueyama}
\address{Department of Mathematics, Faculty of Education,
Hirosaki University,
1 Bunkyocho, Hirosaki, Aomori 036-8560, Japan}
\email{k-ueyama@hirosaki-u.ac.jp}

\keywords{
Twisted Segre product,
noncommutative graded isolated singularity,
densely graded algebra,
noncommutative quadric surface,
maximal Cohen-Macaulay module.}

\subjclass[2020]{16W50, 14A22, 16S38, 16E65, 16G50.}
%16W50: Graded rings and modules (associative rings and algebras)
%14A22: Noncommutative algebraic geometry
%16S38: Rings arising from noncommutative algebraic geometry
%16E65: Homological conditions on associative rings (generalizations of regular, Gorenstein, Cohen-Macaulay rings, etc.)
%16G50 Cohen-Macaulay modules in associative algebras

\begin{abstract}
We introduce the notion of the twisted Segre product $A\circ_\psi B$ of $\mathbb Z$-graded algebras $A$ and $B$ with respect to a twisting map $\psi$.
It is proved that if $A$ and $B$ are noetherian Koszul Artin-Schelter regular algebras and $\psi$ is a twisting map such that the twisted Segre product $A\circ_\psi B$ is noetherian, then $A\circ_\psi B$ is a noncommutative graded isolated singularity. To prove this result, the notion of densely (bi-)graded algebras is introduced.
Moreover, we show that the twisted Segre product $A\circ_\psi B$ of $A=k[u,v]$ and $B=k[x,y]$ with respect to a diagonal twisting map $\psi$ is a noncommutative quadric surface (so in particular it is noetherian), and we compute the stable category of graded maximal Cohen-Macaulay modules over it.
\end{abstract}

\maketitle

%\tableofcontents

\section{Introduction}
Let $k$ be an algebraically closed field of characteristic $0$, and let $\PP^{d}$ be the $d$-dimensional projective space over $k$.

On the geometric side, the \emph{Segre embedding} is defined as the map
\begin{align*}
\varphi:\PP^{n-1} \times \PP^{m-1} &\to \PP^{nm-1}\\
((a_1, a_2,\dots, a_n), (b_1, b_2\dots, b_m)) &\mapsto (a_1b_1, a_2b_1,\dots, a_{n-1}b_m, a_nb_m).
\end{align*}
The map $\varphi$ is injective, and
the image of $\varphi$ is a subvariety of $\PP^{nm-1}$.
The \emph{Segre product} $X \circ Y$ of two projective
varieties $X \subseteq \PP^{n-1}$ and $Y \subseteq \PP^{m-1}$ is defined by the image of $X \times Y$ in $\PP^{nm-1}$ under the Segre embedding $\varphi$.

On the algebraic side, the \emph{Segre product} $A \circ B$ of two (not necessarily commutative) $\mathbb Z$-graded $k$-algebras $A=\bigoplus_{i \in \ZZ}A_i$ and $B=\bigoplus_{i \in \ZZ}B_i$ is defined to be the $\mathbb Z$-graded $k$-algebra $A \circ B := \bigoplus_{i \in \ZZ} (A_i \otimes_k B_i)$.

The algebraic side and the geometric side are related as follows:
if $X \subseteq \PP^{n-1}$ and $Y \subseteq \PP^{m-1}$ are projective varieties with the homogeneous coordinate rings $A$ and $B$, respectively, then the Segre product $A\circ B$ is the homogeneous coordinate ring for the Segre product $X \circ Y \subseteq \PP^{nm-1}$.
It is well-known that Segre products are essential in both algebraic geometry and commutative ring theory (see \cite{Har, GW}).

Let us consider the simplest case.
Let $A=k[u,v]$ and $B=k[x,y]$ be graded polynomial rings in two variables of degree one,
that is, copies of the homogeneous coordinate ring of the projective line $\PP^1$.
The Segre embedding
$\PP^1 \times \PP^1 \to \PP^3; ((a_1, a_2), (b_1, b_2)) \mapsto (a_1b_1, a_2b_1, a_1b_2, a_2b_2)$
embeds $\PP^1 \times \PP^1$ as a smooth quadric surface
$Q=V(XW-YZ)$ in $\PP^3= \Proj k[X,Y,Z,W]$, and
the Segre product $A \circ B = k[X, Y, Z, W]/(XW-YZ)$ is the homogeneous coordinate ring of $Q$, where $X= u\otimes x, Y=v\otimes x, Z=u\otimes y, W=v\otimes y$.

The purpose of this work is to generalize the notion of the Segre product of graded algebras for the development of noncommutative algebraic geometry.

Again, let us consider the simplest case.
Since (right and left) noetherian Koszul Artin-Schelter regular (abbreviated AS-regular) algebras are considered as nice noncommutative generalizations of polynomial algebras
in noncommutative algebraic geometry,
a natural noncommutative generalization of the Segre product $k[u,v] \circ k[x,y]$ is to replace $k[u,v]$ and $k[x,y]$ by $2$-dimensional noetherian Koszul AS-regular algebras.
However, this is not interesting in the following sense.
\begin{prop}[{\cite[Lemma 2.12]{VR}}] \label{prop.spas}
If $S$ and $T$ are $2$-dimensional noetherian Koszul AS-regular algebras, then the category of graded right modules over $S \circ T$
is equivalent to the category of graded right modules over $k[x,y] \circ k[u,v]$.
\end{prop}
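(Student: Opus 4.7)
The strategy is to reduce the equivalence to a well-known fact about Zhang twists. Every $2$-dimensional noetherian Koszul AS-regular algebra is, up to isomorphism, either a quantum plane $k\<x,y\>/(xy-qyx)$ with $q\in k^\times$ or the Jordan plane $k\<x,y\>/(xy-yx-y^2)$, and a direct check shows that each of these is a Zhang twist of the commutative polynomial ring $k[x,y]$: for the quantum plane one takes the graded automorphism $\sigma(x)=x$, $\sigma(y)=qy$, and for the Jordan plane $\sigma(x)=x$, $\sigma(y)=y+x$. By Zhang's theorem, passing to a Zhang twist induces an equivalence of graded module categories.

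Write $S=k[u,v]^\sigma$ and $T=k[x,y]^\tau$ for suitable graded automorphisms $\sigma$ and $\tau$ obtained from this classification. Since $\sigma$ and $\tau$ preserve each graded component, $\sigma\otimes\tau$ is a graded automorphism of $k[u,v]\otimes k[x,y]$ that stabilizes every summand $k[u,v]_i\otimes k[x,y]_i$, and thus restricts to a graded automorphism $\rho$ of the Segre product $k[u,v]\circ k[x,y]$. The key step is then to identify
\[
S\circ T\;\cong\;\bigl(k[u,v]\circ k[x,y]\bigr)^\rho
\]
as graded algebras. This reduces to a direct computation on homogeneous elements: for $s_1\otimes t_1\in(S\circ T)_i$ and $s_2\otimes t_2\in(S\circ T)_j$, both the multiplication in $S\circ T$ (coming from the twisted products of $S$ and $T$) and the twisted multiplication on $(k[u,v]\circ k[x,y])^\rho$ (using $\rho^i=\sigma^i\otimes\tau^i$) produce $\bigl(s_1\sigma^i(s_2)\bigr)\otimes\bigl(t_1\tau^i(t_2)\bigr)$. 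Hence the identity map on the underlying graded vector space is an isomorphism of graded algebras. Applying Zhang's theorem once more yields $\GrMod(S\circ T)\simeq\GrMod\bigl(k[u,v]\circ k[x,y]\bigr)$, which is the claimed equivalence (up to harmlessly swapping the two tensor factors).

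The only genuinely nontrivial input is the classification of $2$-dimensional noetherian Koszul AS-regular algebras together with the fact that each is a Zhang twist of $k[x,y]$; both points are standard but should be cited carefully. Once these are granted, compatibility of Zhang twisting with Segre products is purely a bookkeeping verification on homogeneous elements, and no deeper ring-theoretic input is required.
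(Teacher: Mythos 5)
Your proposal is correct, and it is essentially the argument behind the result the paper simply cites: the paper gives no proof of Proposition \ref{prop.spas} beyond referring to \cite[Lemma 2.12]{VR}, whose proof likewise rests on the facts that every $2$-dimensional noetherian Koszul AS-regular algebra (quantum or Jordan plane) is a Zhang twist of $k[x,y]$ and that twisting by $\sigma\otimes\tau$ is compatible with forming Segre products, so that $S\circ T$ is a twist of $k[u,v]\circ k[x,y]$ and \cite[Theorem 1.1]{Z} yields the equivalence of graded module categories. The degree-matching in the Segre product, which you verify, is exactly what makes $\rho^i=\sigma^i\otimes\tau^i$ act correctly on $(k[u,v]\circ k[x,y])_i$, so no further input is needed.
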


In order to obtain a proper noncommutative generalization
(up to graded module category equivalence),
in this paper, we introduce and discuss the notion of \emph{twisted Segre product}.

Let $A$ and $B$ be $\mathbb Z$-graded algebras. We define a \emph{twisting map} $\psi:B\otimes A\to A\otimes B$ in Section \ref{sect-s}. A twisting map is indeed a smash product structure as introduced in \cite{CMZ} with more restrictions. Given a twisting map $\psi:B\otimes A\to A\otimes B$, we introduce the \emph{twisted Segre product} $A\circ_\psi B$ in Section \ref{sect-s}. The relations between the smash product $A\#_\psi B$ and $A\circ_\psi B$ is discussed in Section \ref{sect-d}.

The main result of the paper is the following theorem.

\begin{thm}[{Theorem \ref{thm-d4}}]\label{thm.imain}
Let $A$ and $B$ be noetherian Koszul AS-regular algebras, and let $\psi:B\otimes A \to A\otimes B$ be a twisting map.
Assume that the twisted Segre product $A\circ_\psi B$ is noetherian.
Then $A\circ_\psi B$ is a graded isolated singularity; that is,
the category $\qgr A\circ_\psi B$ has finite global dimension.
\end{thm}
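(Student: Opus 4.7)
The strategy is to lift the problem from the diagonal subalgebra $A\circ_\psi B$ up to the full twisted smash product $A\#_\psi B$, exploit the bigraded structure on the latter, and then use the fact that $A\#_\psi B$ inherits AS-regularity from $A$ and $B$. View $A\#_\psi B$ as a $\ZZ^2$-graded algebra with $(A\#_\psi B)_{i,j}=A_i\otimes B_j$; then $A\circ_\psi B=\bigoplus_{n\ge 0}(A\#_\psi B)_{n,n}$ is its diagonal, densely bigraded in the sense of Section \ref{sect-d}. Since $A$ and $B$ are noetherian Koszul AS-regular and $\psi$ is a twisting map, $A\#_\psi B$ should itself be a bigraded Koszul AS-regular algebra whose bigraded global dimension equals $\gldim A+\gldim B<\infty$.

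Next, I would apply the densely bigraded machinery of Section \ref{sect-d} to establish a categorical equivalence
\[
\qgr(A\circ_\psi B)\;\simeq\;\qbigr(A\#_\psi B),
\]
where $\qbigr(A\#_\psi B)$ denotes the Serre quotient of bigraded noetherian $A\#_\psi B$-modules by the bi-torsion subcategory. The noetherianity hypothesis on $A\circ_\psi B$ is essential here: it is what guarantees that restriction of bigraded $A\#_\psi B$-modules along the diagonal, together with the bigraded extension functor, descend to mutually inverse equivalences between the relevant Serre quotients, and that the two torsion theories match up correctly.

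With the equivalence in hand, finite global dimension of $\qgr(A\circ_\psi B)$ is reduced to finite global dimension of $\qbigr(A\#_\psi B)$. But every finitely generated bigraded $A\#_\psi B$-module admits a finite bigraded projective resolution, since $A\#_\psi B$ has finite bigraded global dimension; applying the exact Serre quotient functor produces a finite projective resolution in $\qbigr(A\#_\psi B)$, so this quotient category has finite global dimension as well. Transporting the bound back along the equivalence yields exactly the ``graded isolated singularity'' conclusion for $A\circ_\psi B$.

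The principal difficulty lies in the second step: matching the graded torsion on $A\circ_\psi B$ with the bi-torsion on bigraded $A\#_\psi B$-modules through the densely bigraded correspondence. Concretely, one must verify that a bigraded $A\#_\psi B$-module $M$ is bi-torsion if and only if its diagonal $\bigoplus_n M_{n,n}$ is torsion over $A\circ_\psi B$, and that every finitely generated graded $A\circ_\psi B$-module lifts to a bigraded noetherian $A\#_\psi B$-module. This is delicate because $\psi$ genuinely mixes the two gradings, so the diagonal and the bigraded algebra are not related by a formal change of grading. A secondary obstacle is ensuring that $A\#_\psi B$ is itself noetherian and bigraded AS-regular once noetherianity has been imposed only on the diagonal $A\circ_\psi B$; I expect this to follow from a transfer argument exploiting the density of the bigrading.
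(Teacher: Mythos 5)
Your overall route is the paper's own: pass to the smash product $A\#_\psi B$, which is Koszul of finite global dimension $d=\gldim A+\gldim B$ because bijectivity of $\psi$ makes $\sigma_\psi$ invertible (\cite[Theorem 2]{WS}, \cite[Theorem 2.11]{SZL}), and then transport finiteness through a Dade-type equivalence between a quotient category of bigraded $A\#_\psi B$-modules and $\qgr A\circ_\psi B$ (Theorem \ref{thm-d3}). However, your last step contains a genuine error: an exact Serre quotient functor does not carry projective resolutions to projective resolutions, since images of projectives are almost never projective in the quotient --- already for $A=k[x_0,\dots,x_n]$ the image of the free module in $\qgr A\simeq\coh\PP^n$ is $\mathcal{O}_{\PP^n}$, and $\coh\PP^n$ has no nonzero projective objects at all. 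So ``finite projective resolutions descend'' proves nothing about the global dimension of $\QBiGr_{\L}A\#_\psi B$. The mechanism the paper uses is the dual one: the projection $\BiGrMod A\#_\psi B\to\QBiGr_{\L}A\#_\psi B$ is exact and preserves \emph{injective} objects (\cite[Corollary 5.4]{P}), and the bigraded global dimension of $A\#_\psi B$ is bounded by its ungraded global dimension $d$ (\cite[I.2.7]{NVO}); hence injective resolutions of length at most $d$ descend and every object of the quotient has finite injective dimension.

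The equivalence itself you only flag as ``the principal difficulty'' without proving it, and two of your expectations about it are off. First, the diagonal and the bigraded algebra \emph{are} related by a formal change of grading: because $\psi(B_i\otimes A_j)\subseteq A_j\otimes B_i$ by (\ref{tm1}), setting $S_{(i,j)}=A_{i+j}\otimes B_j$ regrades $A\#_\psi B$ so that $A\circ_\psi B$ is exactly the degree-zero part $S_0$ for the first grading; the twisting causes no interference here. Second, the torsion class on the bigraded side must be the \emph{locally} torsion one ($xS_0$ finite dimensional, equivalently every component of $xS$ finite dimensional, as in Lemma \ref{lem-d2}), not total finite-dimensionality: since $S_i\neq 0$ for all $i\in\ZZ$, cyclic bigraded modules are essentially never finite dimensional, so with a naive ``bi-torsion'' class the matching you ask for ($M$ torsion if and only if its diagonal is torsion over $A\circ_\psi B$) fails. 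With the correct class, that matching and the lifting $M\mapsto M_0\otimes_{S_0}S$ are precisely what the densely (bi)graded Dade theorem supplies (Theorems \ref{thm-d1} and \ref{thm-d2}, via Lemmas \ref{lem-d1}--\ref{lem-d3} and Proposition \ref{prop-d1}); the noetherian hypothesis on $A\circ_\psi B$ enters only to make $S_0$ noetherian there, and, contrary to your ``secondary obstacle,'' neither noetherianity nor bigraded AS-regularity of $A\#_\psi B$ itself is ever needed.
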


Since $\qgr A\circ_\psi B$ is considered to be the category of coherent sheaves on the (imaginary) noncommutative projective scheme associated to $A\circ_\psi B$, this theorem can be regarded as a noncommutative version of the fact that $\PP^{n-1} \times \PP^{m-1}$ is smooth.
Noncommutative graded isolated singularities play an important role in Cohen-Macaulay representation theory (see \cite{Ue, SV, MUs, MU, HY}).

To prove Theorem \ref{thm.imain}, we introduce the concept of \emph{densely graded algebras} for $\mathbb Z$-graded algebras and for $\mathbb Z \times \mathbb Z$-bigraded algebras in Section \ref{sect-d}. Then we prove a version of Dade's Theorem for densely graded algebras (see, Theorems \ref{thm-d1} and \ref{thm-d2}). Applying a version of Dade's Theorem to the bigraded smash products $A\#_\psi B$ induces an equivalence between quotient categories of $A\#_\psi B$ and $A\circ_\psi B$ (see, Theorem \ref{thm-d3}).

Furthermore, in light of the usual Segre products of commutative rings, the following questions naturally arise.
\begin{ques}
Let $A$ and $B$ be noetherian Koszul AS-regular algebras of dimension $d\geq 2$ and $d'\geq 2$, respectively.
Let $\psi:B\otimes A\to A\otimes B$ be a twisting map.
\begin{enumerate}
\item Is $A\circ_\psi B$ noetherian?
\item Is $A\circ_\psi B$ Koszul?
%\item Is $A\circ_\psi B$ a domain?
\item  If $d=d'$, then is $A\circ_\psi B$ AS-Gorenstein?
\item If $d=d'=2$, then is $A\circ_\psi B$ a noncommutative quadric surface? That is, are there a noetherian Koszul AS-regular algebra $S$ of dimension $4$ and a regular normal element $f \in S_2$ such that
$A\circ_\psi B \cong S/(f)$?
\end{enumerate}
\end{ques}

In general, twisted Segre products of two noetherian Koszul AS-regular algebras are complicated, and we cannot give an answer to the above questions at present. However, we can obtain the following result as a partial answer.

\begin{thm}[{Theorem \ref{thm.rel}\textrm{(ii)}}] \label{thm.imain2}
Let $A=k[u,v], B=k[x,y]$ and let $\psi:B\otimes  A\to A\otimes B$ be a twisting map. Assume that $\psi$ is diagonal.
Then $A\circ_{\psi} B$ is a noncommutative quadric surface.
In particular, it is noetherian, Koszul, and AS-Gorenstein.
\end{thm}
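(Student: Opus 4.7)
The plan is to present $A\circ_\psi B$ as a quotient of an explicit $4$-dimensional skew polynomial ring by a normal regular quadric, which is the definition of a noncommutative quadric surface. Since $\psi$ is diagonal, it is determined by four nonzero scalars $\lambda_{ij}\in k^\times$ via $\psi(x_i\otimes u_j)=\lambda_{ij}\,u_j\otimes x_i$, where $u_1=u$, $u_2=v$, $x_1=x$, $x_2=y$; compatibility of $\psi$ with the multiplications on $A=k[u,v]$ and $B=k[x,y]$ is automatic because both are commutative polynomial rings. Hence $A\#_\psi B$ is the skew polynomial algebra on $u,v,x,y$ with the six rules $x_iu_j=\lambda_{ij}u_jx_i$.

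Mimicking the classical Segre presentation, set $X=u\otimes x$, $Y=v\otimes x$, $Z=u\otimes y$, $W=v\otimes y$; these form a basis of $A_1\otimes B_1$ and generate $A\circ_\psi B$. Direct multiplication inside $A\#_\psi B$ then yields six skew-commutation relations among $X,Y,Z,W$, one for each unordered pair, with coefficients expressible as ratios of the $\lambda_{ij}$, together with one Segre-type quadratic relation $XW=\mu\,YZ$ where $\mu=\lambda_{12}/\lambda_{11}$ (recovering the classical $XW=YZ$ when all $\lambda_{ij}=1$). Let $S$ be the algebra defined on $X,Y,Z,W$ by only the six skew-commutation relations; since every skew polynomial ring on four generators is noetherian, Koszul, and AS-regular of dimension four, $S$ is a natural candidate for the ambient algebra and $f:=XW-\mu\,YZ\in S_2$ for the defining quadric.

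The proof is completed by two verifications. First, $f$ is a normal regular element of $S$: normality reduces to checking that for each generator $g\in\{X,Y,Z,W\}$, the two monomials $XW$ and $YZ$ skew-commute with $g$ by the \emph{same} scalar (so that $fg=\nu_g gf$ for some $\nu_g\in k^\times$), which boils down to a short identity among ratios of the $\lambda_{ij}$; regularity is then immediate because $S$ is a domain. Second, the obvious surjective graded map $S/(f)\twoheadrightarrow A\circ_\psi B$ is an isomorphism by Hilbert-series comparison: both sides have Hilbert series $(1+t)/(1-t)^3$, since $\dim_k(A\circ_\psi B)_n=\dim_k(A_n\otimes B_n)=(n+1)^2$, while the short exact sequence $0\to S(-2)\xrightarrow{f}S\to S/(f)\to 0$ gives $H_{S/(f)}(t)=(1-t^2)/(1-t)^4=(1+t)/(1-t)^3$. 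The ``in particular'' statement then follows from standard results: quotients of noetherian Koszul AS-regular algebras by normal regular elements of positive degree are noetherian, Koszul, and (with a degree shift) AS-Gorenstein. The main obstacle in this plan is the normality check for $f$: one must verify that, with the predicted value $\mu=\lambda_{12}/\lambda_{11}$, the two monomials $XW$ and $YZ$ skew-commute with every generator of $S$ by the \emph{same} scalar, which is precisely the compatibility that makes $f$ normal.
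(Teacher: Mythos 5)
There is a genuine gap at the very first step: your reduction of a diagonal twisting map to four scalars $\lambda_{ij}$ with $\psi(x_i\otimes u_j)=\lambda_{ij}\,u_j\otimes x_i$ is not what ``diagonal'' means in this paper, and it does not cover the general case. Diagonality (Definition \ref{def-diag}) says only that the algebra-homomorphism-valued matrix $\sigma_\psi$ is diagonal, i.e.\ $\psi(x\otimes a)=\sigma_{11}(a)\otimes x$ and $\psi(y\otimes a)=\sigma_{22}(a)\otimes y$ for graded automorphisms $\sigma_{11},\sigma_{22}$ of $k[u,v]$; on degree one these are given by two invertible matrices $C,Q$ subject only to $CQ=QC$ (Proposition \ref{prop-dtwmc}). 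Commuting matrices over $k$ can be simultaneously triangularized but not, in general, simultaneously diagonalized (e.g.\ $C$ a Jordan block, $Q$ the identity), and a change of basis in $B$ cannot fix this. Your ansatz therefore treats only the simultaneously diagonalizable subfamily, which is exactly the special case worked out in Example \ref{ex.nqs}, not the full statement; the ``Jordan-type'' twisting maps produce genuinely different algebras (the relation $a_{11}XY-a_{22}YX-a_{21}X^2$ with $a_{21}\neq 0$ is of Jordan-plane type, not skew-commutative).

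Consequently the core of your argument breaks down outside that subfamily: the six quadratic relations among $X,Y,Z,W$ are no longer pure skew-commutation relations (they acquire the terms involving $a_{21},b_{21}$ as in Theorem \ref{thm.rel}(i)), so the ambient algebra $S$ is not a skew polynomial ring and its AS-regularity is not free; the paper has to realize $S=k\<X,Y,Z,W\>/(f_1,\dots,f_6)$ as a double Ore extension of $k\<X,Y\>/(f_1)$ and invoke \cite{ZZ1,ZZ2} (Lemma \ref{lem.ASrel}). Likewise the candidate quadric is not $XW-\mu YZ$ but $f_7=a_{11}XW-a_{22}YZ-a_{21}XZ$, and its normality requires the nontrivial computation of Lemma \ref{lem.nor} using the commutation constraint \eqref{eq.comm}, not just a comparison of two monomial twist scalars. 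Your Hilbert-series comparison $(1+t)(1-t)^{-3}$ and the final ``in particular'' deductions are the same as the paper's and are fine once AS-regularity of $S$ and normality of the quadric are established; what is missing is precisely those two points in the general (non-semisimple $C,Q$) case.
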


Unlike the situation in Proposition \ref{prop.spas}, we can see that $A\circ_{\psi} B$ in the above theorem is a proper extension of $A\circ B$ up to equivalence of their graded module categories (see, Example \ref{ex.nqs}).

The stable category of graded maximal Cohen-Macaulay modules over a noncommutative quadric hypersurface (especially when it is an isolated singularity) is an interesting subject to study (see \cite{SV, HY, MU, HU, HMY}).
In the setting of Theorem \ref{thm.imain2}, $A\circ_{\psi}B$ is a noncommutative quadric surface and an isolated singularity by Theorems \ref{thm.imain} and \ref{thm.imain2},
so we calculate the stable category of graded maximal Cohen-Macaulay modules over $A\circ_{\psi}B$, denoted by $\uCM^{\mathbb Z}(A\circ_{\psi}B)$.

\begin{thm} [{Corollary \ref{cor-sing}}]
Let $A$, $B$, and $\psi$ be as in Theorem \ref{thm.imain2}.
Then there is an equivalence of triangulated categories \[\uCM^{\mathbb Z} (A\circ_\psi B)\cong \Db(\mod k \times k),\]
where $\mod k \times k$ is the category of finite dimensional right modules over the semisimple algebra $k \times k$, and $\Db(\mod k \times k)$ the bounded derived category of $\mod k \times k$.
\end{thm}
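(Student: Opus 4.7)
The plan is to reduce the statement to the general theory of stable categories of graded maximal Cohen--Macaulay modules over noncommutative quadric hypersurfaces. By Theorem~\ref{thm.imain2} we may write $A\circ_\psi B \cong S/(f)$, where $S$ is a noetherian Koszul AS-regular algebra of global dimension $4$ and $f\in S_2$ is a regular normal element; combining this with Theorem~\ref{thm.imain}, the quotient $A\circ_\psi B$ is moreover a graded isolated singularity. Thus we are exactly in the setting where the Smith--Van den Bergh-type equivalence for noncommutative quadric hypersurfaces is available.

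Next, I would invoke the established triangulated equivalence
\[
\uCM^{\mathbb Z}(S/(f)) \cong \Db(\mod C(S,f))
\]
obtained in this context in \cite{SV} and refined in \cite{HY, MU, HU, HMY}, where $C(S,f)$ is a certain finite-dimensional ``reduced noncommutative Clifford algebra'' attached to the pair $(S,f)$. This converts the original problem into the purely finite-dimensional task of identifying $C(S,f)$ as an abstract $k$-algebra.

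The key remaining step, which I expect to be the main obstacle, is to show that $C(S,f)\cong k\times k$. Using the explicit description supplied by the proof of Theorem~\ref{thm.imain2}, $S$ can be realized as a skew polynomial ring in four variables $X,Y,Z,W$ whose commutation scalars are determined by the diagonal data of $\psi$, while $f$ is a twisted analogue of the classical Segre relation $XW-YZ$. One then computes $C(S,f)$ by putting the matrix of $f$ (with respect to a basis of $S_1$) into normal form: in the diagonal case this matrix remains antidiagonal up to nonzero scalars, which is the noncommutative counterpart of a split hyperbolic quadratic form. Feeding this into the general recipe for the reduced Clifford algebra of \cite{SV, MU, HU} yields a two-dimensional commutative semisimple $k$-algebra, and since $k$ is algebraically closed of characteristic $0$, the only such algebra is $k\times k$.

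Combining these two steps produces the asserted equivalence $\uCM^{\mathbb Z}(A\circ_\psi B)\cong \Db(\mod k\times k)$. The main difficulty is the last step: one must check that the noncommutative deformation introduced by a diagonal $\psi$ does not move $C(S,f)$ away from the split hyperbolic type, and in particular does not introduce extra nilpotents or collapse the two summands. Given the rigidity of the antidiagonal normal form under diagonal twisting, this verification should be short but requires careful bookkeeping in the Clifford-type construction, and is the only point where the diagonal hypothesis on $\psi$ is genuinely used.
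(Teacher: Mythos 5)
Your overall framing coincides with the paper's: Theorem \ref{thm.rel} realizes $A\circ_\psi B$ as $S/(f_7)$ with $S$ a $4$-dimensional noetherian Koszul AS-regular algebra and $f_7$ regular normal, and Proposition \ref{prop.C(A)} converts the problem into identifying the finite-dimensional algebra $C(A\circ_\psi B)=(A\circ_\psi B)^![w^{-1}]_0$. The gap lies precisely in the identification you defer. First, the intermediate claim is wrong as stated: $C(A\circ_\psi B)$ is not ``a two-dimensional commutative semisimple algebra''; it is an $8$-dimensional algebra, and the paper proves in Theorem \ref{thm-c} that it is isomorphic to $M_2(k)\times M_2(k)$, which is only Morita equivalent (not isomorphic) to $k\times k$. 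The final equivalence $\uCM^{\ZZ}(A\circ_\psi B)\cong \Db(\mod k\times k)$ does survive, but via $\Db(\mod M_2(k)\times M_2(k))\cong \Db(\mod k\times k)$, not via the argument ``the only two-dimensional semisimple algebra over $k$ is $k\times k$.''

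Second, and more substantially, the step you call ``careful bookkeeping'' is the entire content of the proof, and the shortcut you propose does not apply. When $a_{21}\neq 0$ or $b_{21}\neq 0$, the ambient algebra $S$ is not a skew polynomial ring with scalar commutation relations but a double Ore extension with non-scalar terms such as $a_{21}X^2$ in $f_1$ (Lemma \ref{lem.ASrel}), and $f_7=a_{11}XW-a_{22}YZ-a_{21}XZ$ carries a cross term, so there is no off-the-shelf ``matrix of the quadratic form in antidiagonal normal form'' to feed into a Clifford-type recipe. The paper instead computes the Koszul dual $(A\circ_\psi B)^!$ (Lemma \ref{lem.Kos}), exhibits the regular normal element $w$ with $(A\circ_\psi B)^!/(w)\cong S^!$ together with its normalizing automorphism $\nu$ (Lemma \ref{lem.nord}), and then writes down an explicit isomorphism $C(A\circ_\psi B)\cong M_2(k)\times M_2(k)$ by checking a multiplication table in the localization (Theorem \ref{thm-c}). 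Note also that appealing to the isolated-singularity theorem only gives, via Proposition \ref{prop.isol}, that $C(A\circ_\psi B)$ is semisimple; an $8$-dimensional semisimple algebra over $k$ can have anywhere from two to eight simple blocks, and the block count is exactly what determines $\Db(\mod C(A\circ_\psi B))$ up to equivalence. So the decisive computation is missing from your proposal, and the heuristic offered in its place is not accurate in the generality of a diagonal twisting map.
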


% % % % % % % % %
\section{Preliminaries}
% % % % % % % % %
Throughout, $k$ is an algebraically closed field of characteristic $0$. All algebras and vector spaces considered in this paper are over $k$, and all unadorned tensor products $\otimes$ are taken over $k$. An algebra is called \emph{noetherian} if it is right and left noetherian.

For an algebra $A$, we write $\Mod A$ for the category of right $A$-modules,
and $\mod A$ for the full subcategory of $\Mod A$ consisting of finitely generated modules.
For a $\mathbb Z$-graded algebra $A$, we write $\GrMod A$ for the category of graded right $A$-modules with degree zero $A$-module homomorphisms,
and $\grmod A$ for the full subcategory of $\GrMod A$ consisting of finitely generated modules.

A $\mathbb Z$-graded algebra $A=\bigoplus_{i\in\mathbb Z}A_i$ is called a \emph{connected} graded algebra if $A_i=0$ for all $i<0$ and $A_0=k$. A \emph{quadratic algebra} is a connected graded algebra which is generated in degree $1$ with quadratic relations; that is, $A\cong T(V)/(R)$, where $V$ is a finite dimensional vector space, $T(V)$ is the tensor algebra generated by $V$, and $R\subseteq V\otimes V$.
The \emph{quadratic dual} of a quadratic algebra $A=T(V)/(R)$ is defined to be $A^!=T(V^*)/(R^\bot)$, where $V^*$ is the dual vector space of $V$ and $R^\bot\subseteq V^*\otimes V^*$ is the orthogonal complement of $R$.
A connected graded algebra $A$ is called a \emph{Koszul algebra} if the trivial graded module $k \in \GrMod A$ has a linear resolution
$$\cdots\longrightarrow F^{-n}\longrightarrow\cdots \longrightarrow F^{-1}\longrightarrow F^0\longrightarrow k\longrightarrow0,$$
where $F^{-n}$ is a graded free module generated in degree $n$.
It is known that a Koszul algebra $A$ must be a quadratic algebra
and its quadratic dual $A^!$ is also a Koszul algebra (see \cite{Pr}).

Let $A$ be a $\mathbb Z$-graded algebra. For $M \in \GrMod A$ and $i \in \mathbb Z$, we define the \emph{shift} of $M$ by $i$ to be the graded right $A$-module $M(i)=\bigoplus_{j \in \mathbb Z}M(i)_j$, where $M(i)_j = M_{i+j}$.
For $M, N \in \GrMod A$, we define
\[ \Ext_A^j(M,N) = \bigoplus _{i\in \mathbb Z}\Ext^j_{\GrMod A}(M, N(i)).\]

A noetherian connected graded algebra $A$ is called an \emph{Artin-Schelter Gorenstein algebra} of dimension $d$ if the following conditions are satisfied (see \cite{AS}):
\begin{enumerate}
  \item  the injective dimensions $\injdim {}_AA=\injdim A_A=d<\infty$;
  \item $\Ext^i_A(k,A)=0$ for $i\neq d$ and $\Ext_A^d(k,A)\cong k(\ell)$ for some $\ell \in \mathbb Z$;
  \item  the left version of ${\rm(ii)}$ is satisfied.
\end{enumerate}
If further, $\gldim A=d$, then $A$ is called an \emph{Artin-Schelter regular algebra}. Below, we  will simply write ``AS'' for ``Artin-Schelter''.

\begin{rem}
The standard definition of Artin-Schelter regularity has the hypothesis of finite Gelfand-Kirillov (GK) dimension. However it is unnecessary here, since it follows from the noetherian property by \cite[Theorem 2.4]{SZ}.
\end{rem}

Let $A$ be a noetherian AS-Gorenstein algebra.
A finitely generated graded module $M\in \grmod A$ is called \emph{maximal Cohen-Macaulay} if $\Ext^i_A(M, A)=0$ for all $i \geq  1$.
The full subcategory of $\grmod A$ consisting of graded maximal Cohen-Macaulay modules is denoted by $\CM^{\ZZ}(A)$. The category $\CM^{\ZZ}(A)$ is a Frobenius category.
The \emph{stable category} of graded maximal Cohen-Macaulay modules, denoted by $\uCM^{\ZZ}(A)$, has the same objects as $\CM^{\ZZ}(A)$,
and the morphism space is given by
\[ \Hom_{\uCM^{\ZZ}(A)}(M, N) = \Hom_{\CM^{\ZZ}(A)}(M,N)/P(M,N), \]
where $P(M,N)$ consists of degree zero $A$-module homomorphisms factoring through a graded projective module. By \cite{Hap}, $\uCM^{\ZZ}(A)$ canonically has a structure of triangulated category.

Let $S$ be a noetherian Koszul AS-regular algebra of dimension $d$, and let $f\in S_2$ be a regular normal element of degree $2$. Then we call the quotient algebra $A=S/(f)$ a \emph{noncommutative quadric hypersurface}.
It is well known that a noncommutative quadric hypersurface $A=S/(f)$ is a noetherian Koszul AS-Gorenstein algebra of dimension $d-1$.
In particular, if $d=4$, then $A=S/(f)$ is called a \emph{noncommutative quadric surface}.

Let $S$ be a $d$-dimensional noetherian Koszul AS-regular algebra, $f\in S_2$ a regular normal element of degree $2$, and $A=S/(f)$.
Then there exists a unique regular normal element $w \in A_2^!$ up to scalar such that $A^!/(w) = S^!$ by \cite[Corollary 1.4]{ST}.
Since $w$ is regular normal, there exists a unique graded algebra automorphism $\nu_w$ of $A^!$ such that $aw=w\nu_w(a)$ for $a\in A^!$. We call $\nu_w$ the \emph{normalizing automorphism} of $w$.
Then we define
\[ C(A) = A^![w^{-1}]_0, \]
where $A^![w^{-1}]:=\{aw^{-i}\mid a\in A^!, i\in \NN\}$ is a $\mathbb Z$-graded algebra with
\begin{itemize}
\item (addition)\  $aw^{-i}+bw^{-j}=(aw^j+bw^i)w^{-i-j}$;
\item (multiplication)\ $(aw^{-i})(bw^{-j})=a\nu_w^i(b)w^{-i-j}$;
\item (grading)\ $\deg(aw^{-i})=\deg a-2i$.
\end{itemize}

For a noncommutative quadric hypersurface $A=S/(f)$, it is known that $C(A)$ is essential to compute the stable category $\uCM^{\ZZ}(A)$.

\begin{prop}[{\cite[Lemma 4.13]{MU}; see also \cite[Lemma 5.1, Proposition 5.2]{SV} for the case where $f$ is central}]
\label{prop.C(A)}
Let $S$ be a $d$-dimensional noetherian Koszul AS-regular algebra, $f\in S_2$ a regular normal element of degree $2$, and $A=S/(f)$.
Then
%\begin{enumerate}
%\item $\dim_k C(A)=2^{d-1}$.
%\item
there exists an equivalence of triangulated categories
\[\uCM^{\ZZ}(A) \overset{\cong}{\longrightarrow} \Db(\mod C(A)),\]
where $\Db(\mod C(A))$ is the bounded derived category of $\mod C(A)$.
%\end{enumerate}
\end{prop}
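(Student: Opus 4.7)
The plan is to combine three classical tools: Buchweitz's identification of the stable Cohen-Macaulay category with the singularity category, Koszul duality between $A$ and its quadratic dual $A^!$, and Dade's theorem applied to the strongly graded algebra $A^![w^{-1}]$.

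First, I would invoke the graded version of Buchweitz's theorem: since $A$ is noetherian AS-Gorenstein, there is an equivalence of triangulated categories
\[ \uCM^{\ZZ}(A) \;\cong\; \Db(\grmod A)/\thick(\text{finitely generated graded projectives}). \]
So the task reduces to realizing this graded singularity category $\D^{gr}_{\sg}(A)$ as $\Db(\mod C(A))$.

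Next, I would use Koszul duality. Because $A$ is Koszul AS-Gorenstein, the quadratic dual $A^!$ is a finite-dimensional graded Frobenius algebra, and the Koszul functor induces a derived equivalence between suitable subcategories of $\D(\grmod A)$ and $\D(\grmod A^!)$ (as in Beilinson-Ginzburg-Soergel). Under this equivalence, the thick subcategory of perfect complexes over $A$ is matched with the thick subcategory of complexes on which multiplication by the normal element $w \in A^!_2$ acts locally nilpotently in cohomology; equivalently, these are precisely the objects killed by formally inverting $w$. Passing to Verdier quotients on both sides should then yield
\[ \D^{gr}_{\sg}(A) \;\cong\; \Db(\grmod A^![w^{-1}]). \]
Finally, because $w$ is a homogeneous normal invertible element of degree $2$ in $A^![w^{-1}]$ with normalizing automorphism $\nu_w$, the $\mathbb Z$-graded algebra $A^![w^{-1}]$ is strongly graded. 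Dade's theorem therefore gives an equivalence of abelian categories $\grmod A^![w^{-1}] \simeq \mod C(A)$ via the degree-zero functor $M \mapsto M_0$, which derives to an equivalence $\Db(\grmod A^![w^{-1}]) \simeq \Db(\mod C(A))$. Chaining the three equivalences yields the statement.

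The principal obstacle is the middle step: identifying, under Koszul duality, the thick subcategory of perfect $A$-complexes with the thick subcategory of $w$-torsion $A^!$-complexes. This is precisely where the relation $A^!/(w) = S^!$ (Corollary 1.4 of \cite{ST}) and the regularity of $S$ are essential. On the one hand, objects annihilated by a power of $w$ lie in $\grmod S^!$ and, via Koszul duality applied to the regular algebra $S$, correspond to bounded complexes of graded projective $S$-modules; pushing down along $S \to A$ makes these perfect over $A$. Conversely, the Koszul dual of a finitely generated graded projective $A$-module is finite-dimensional over $k$ and hence $w$-torsion. Making both directions precise (carefully tracking the roles of the automorphism $\nu_w$ in the normal case, rather than the purely central case of \cite{SV}) is the real work of the proof.
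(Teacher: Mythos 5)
Note first that the paper does not prove this proposition at all: it is quoted verbatim from \cite[Lemma 4.13]{MU} (and \cite[Lemma 5.1, Proposition 5.2]{SV} for central $f$), so the only comparison available is with those cited proofs. Your outline is essentially their route: the graded Buchweitz equivalence $\uCM^{\ZZ}(A)\cong \Db(\grmod A)/\thick(\text{proj})$, a Koszul-duality identification of this singularity category with the quotient of $\Db(\grmod A^!)$ by the complexes with finite-dimensional ($=w$-torsion) cohomology, i.e.\ with $\Db$ of the localization $A^![w^{-1}]$ (equivalently $\Db(\qgr A^!)$, since $A^!/(w)\cong S^!$ is finite dimensional), and finally Dade's theorem for the strongly graded algebra $A^![w^{-1}]$ to land in $\Db(\mod C(A))$. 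So in structure your proposal matches the known proof, and you correctly identify that the Koszul-duality step, carried out for $w$ merely normal (tracking $\nu_w$) rather than central, is where the actual work lies; that step is only sketched here.

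One statement you make is wrong and internally inconsistent with the rest of your argument: $A^!$ is \emph{not} a finite-dimensional graded Frobenius algebra. For $A=S/(f)$ it is the quadratic dual $S^!$ that is finite dimensional Frobenius, while $A^!$ is an infinite-dimensional noetherian algebra containing the regular normal element $w$ of degree $2$ with $A^!/(w)\cong S^!$ (this is \cite[Corollary 1.4]{ST}); were $A^!$ finite dimensional, $w$ would be nilpotent and inverting it would kill the algebra, so $C(A)=A^![w^{-1}]_0$ would be zero. This does not derail the strategy, but it matters for the Koszul duality you invoke: the Beilinson--Ginzburg--Soergel equivalence in the form you cite needs one side finite dimensional, whereas here both $A$ and $A^!$ are infinite dimensional, so one must use the version with appropriately bounded-below/above locally finite derived categories and then argue that it descends to the bounded singularity category. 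Making that precise is exactly the content of \cite[Lemma 4.13]{MU}, which your sketch defers to rather than reproduces.
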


Let $A$ be a noetherian $\mathbb Z$-graded algebra.
A homogeneous element $x$ of a graded module $M \in \GrMod A$ is called a \emph{torsion element} if $\dim_k xA<\infty$.
A graded module $M \in \GrMod A$ is called a {\it torsion module} if every homogeneous element of $M$ is a torsion element.
Write $\Tors A$ for the full subcategory of $\GrMod A$ consisting of all torsion modules.
It is well-known that $\Tors A$ is a Serre subcategory of $\GrMod A$.
Hence we have the quotient category
\[ \QGr A:=\GrMod A/\Tors A.\]
We write $\tors A:= \Tors A \cap \grmod A$ and $\qgr A:=\grmod A/\tors A$.
The category $\QGr A$ (resp. $\qgr A$) is considered as the category of quasi-coherent sheaves (resp. coherent sheaves) on the (imaginary) noncommutative projective scheme associated to $A$ (see \cite{AZ}).

\begin{dfn}[{\cite[Definition 2.2]{Ue}}]
A noetherian $\mathbb Z$-graded algebra is a \emph{noncommutative graded isolated singularity} if $\qgr A$ has finite global dimension.
\end{dfn}

For a noncommutative quadric hypersurface $A=S/(f)$, the following characterization of when $A$ is a noncommutative graded isolated singularity is known.

\begin{prop}[{\cite[Theorem 5.5]{MU}; see also \cite[Theorem 6.3]{HY} for the case where $f$ is central}]
\label{prop.isol}
Let $S$ be a $d$-dimensional noetherian Koszul AS-regular algebra, $f\in S_2$ a regular normal element of degree $2$, and $A=S/(f)$.
Then $A$ is a noncommutative graded isolated singularity if and only if $C(A)$ is a semisimple
algebra.
\end{prop}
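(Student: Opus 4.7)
The plan is to use the derived equivalence in Proposition~\ref{prop.C(A)} to translate the isolated singularity property of $A$ into a condition on the finite-dimensional algebra $C(A)$, and then exploit a self-injective structure on $C(A)$ to upgrade that condition to semisimplicity.

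First, I would establish (or invoke from Ueyama's work) the general characterization that for a noetherian AS-Gorenstein algebra $A$, the algebra $A$ is a graded isolated singularity if and only if the stable category $\uCM^{\ZZ}(A)$ is Hom-finite over $k$. The forward direction uses that finite global dimension of $\qgr A$ forces the stable $\Ext$-groups between graded maximal Cohen-Macaulay modules to be finite-dimensional; the reverse is an Auslander-type argument running through finite-dimensional approximations. Combining this with Proposition~\ref{prop.C(A)}, which gives $\uCM^{\ZZ}(A)\cong \Db(\mod C(A))$, reduces the problem to showing that $\Db(\mod C(A))$ is Hom-finite if and only if $C(A)$ is semisimple.

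Next I would verify that $C(A)$ is finite-dimensional. Since $A=S/(f)$ is noetherian Koszul AS-Gorenstein, $A^!$ is a finite-dimensional Frobenius algebra with grading supported in degrees $0,\ldots,d-1$. Using the multiplication and grading rules for $A^![w^{-1}]$ recalled in the preliminaries, one checks that $C(A)=A^![w^{-1}]_0$ is spanned by finitely many classes of the form $aw^{-i}$ with $\deg a=2i$, and hence is finite-dimensional. For a finite-dimensional $k$-algebra this immediately yields the equivalence: $\Db(\mod C(A))$ is Hom-finite if and only if $\gldim C(A)<\infty$.

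The crucial step is then to show that $C(A)$ is self-injective, in fact Frobenius. The idea is to transport the Nakayama form on $A^!$ through the $w$-localization, using the normalizing automorphism $\nu_w$ to track the twist, and verify that its restriction to degree zero yields a non-degenerate associative pairing on $C(A)$. Once self-injectivity is in hand, a standard argument concludes: if a finite-dimensional self-injective $k$-algebra $R$ has $\gldim R=n\geq 1$, then for any simple module $S$ the $n$-th syzygy $\Omega^n S$ is projective, so by self-injectivity the short exact sequence $0\to \Omega^n S\to P^{-(n-1)}\to \Omega^{n-1}S\to 0$ splits, forcing $\Omega^{n-1}S$ to be projective and contradicting minimality. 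Thus $\gldim C(A)<\infty$ is equivalent to $C(A)$ being semisimple, and chaining the equivalences proves the proposition. The principal obstacle is establishing the Frobenius structure on $C(A)$: in the central case $C(A)$ reduces to a (generalized) Clifford algebra whose Frobenius structure is classical, but in the normal non-central case one must propagate the twist by $\nu_w$ carefully through the localization and degree-zero truncation to ensure that the induced pairing remains non-degenerate and associative.
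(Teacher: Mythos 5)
The paper does not actually prove Proposition \ref{prop.isol}; it is quoted verbatim from \cite[Theorem 5.5]{MU} (and \cite[Theorem 6.3]{HY} for $f$ central), so your proposal can only be measured against the cited arguments. Your overall architecture --- pass through the equivalence $\uCM^{\ZZ}(A)\cong\Db(\mod C(A))$ of Proposition \ref{prop.C(A)}, reduce to a finiteness statement about $C(A)$, and use a self-injective structure on $C(A)$ to convert ``finite global dimension'' into ``semisimple'' --- is indeed the right shape and matches the philosophy of those proofs. But as written the argument has concrete gaps. First, the claim that $A^!$ is a finite-dimensional Frobenius algebra supported in degrees $0,\dots,d-1$ is false: $A=S/(f)$ has infinite global dimension, so $A^!=\Ext_A(k,k)$ is infinite-dimensional; indeed $w\in A^!_2$ is a \emph{regular} element with $A^!/(w)\cong S^!$, so $A^!$ contains the polynomial subalgebra $k[w]$. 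The conclusion $\dim_k C(A)<\infty$ is still true, but it must be obtained differently, e.g.\ from the exact sequences $0\to A^!_{n-2}\xrightarrow{\cdot w}A^!_n\to S^!_n\to 0$, which show that the directed system $A^!_{2i}\xrightarrow{\cdot w}A^!_{2i+2}$ computing $A^![w^{-1}]_0$ stabilizes with $\dim_k C(A)=\sum_j\dim_k S^!_{2j}=2^{d-1}$.

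Second, ``Hom-finiteness'' is the wrong invariant on both sides and makes the central equivalence vacuous. For finitely generated graded modules over a locally finite connected graded algebra, $\Hom_{\GrMod A}(M,N)$ is always finite-dimensional (a map is determined by the images of finitely many generators in finite-dimensional graded pieces), so $\uCM^{\ZZ}(A)$ is Hom-finite whether or not $A$ is an isolated singularity; likewise each $\Hom_{\Db(\mod R)}(X,Y[n])$ is finite-dimensional for any finite-dimensional algebra $R$, regardless of $\gldim R$. The condition that actually detects $\gldim(\qgr A)<\infty$ is finiteness of the \emph{total} stable Hom $\bigoplus_{i\in\ZZ}\Hom_{\uCM^{\ZZ}(A)}(M,N(i))$, and on the other side finiteness of $\bigoplus_{n}\Ext^n_{C(A)}(X,Y)$. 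Matching these two under Proposition \ref{prop.C(A)} is not formal: the degree shift $(1)$ is not the suspension, and one needs to know how $(1)$ is transported through the equivalence --- for a quadric hypersurface this rests on Eisenbud periodicity $\Omega^2M\cong M(-2)$ (up to a twist by the normalizing automorphism), which identifies $(-2)$ with $[2]$ on $\uCM^{\ZZ}(A)$. None of this appears in your outline. Finally, the self-injectivity of $C(A)$ in the non-central case, which you correctly flag as the crux (without it, finite global dimension does not imply semisimplicity), is only announced as a plan; until the non-degenerate associative pairing on $A^![w^{-1}]_0$ is actually constructed, the chain of equivalences does not close.
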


% % % % % % % % % % % % % % % % % %
\section{Twisted Segre products}\label{sect-s}
% % % % % % % % % % % % % % % % % %

In this section, we give the definition of a twisted Segre product.
We first recall some conventional notations from \cite{CMZ}. Let $V$ and $U$ be vector spaces, and let $\psi:U\otimes V\to V\otimes U$ be a linear map. For $u\in U,v\in V$, we write
\[\psi(u\otimes v)=v_\psi\otimes u^\psi=v_\Psi\otimes u^\Psi,\]
where the summation is understood, so we omit the summation symbols.

Let $M=\bigoplus_{i\in \mathbb Z} M_i$ and $N=\bigoplus_{i\in\mathbb Z}N_i$ be $\mathbb Z$-graded vector spaces. Then we define a $\mathbb Z$-graded vector space $M\circ N$ by
\[M\circ N=\bigoplus_{i\in\mathbb Z}(M_i\otimes N_i).\]

Let $A$ and $B$ be $\mathbb Z$-graded algebras. We call a bijective linear map $\psi:B\otimes A\to A\otimes B$ a {\it twisting map} if the following conditions are satisfied:
\begin{eqnarray}
\label{tm1}  &\psi(B_i\otimes A_j)\subseteq A_j\otimes B_i  & \lefteqn{\forall i,j\in\mathbb Z;}\\
 \label{tm2} &\psi(1\otimes a)=a\otimes 1 &\lefteqn{\forall a\in A;}\\
 \label{tm3}  &\psi(b\otimes 1)=1\otimes b &\lefteqn{\forall b\in B;}\\
\label{tm4}   &\psi(bc\otimes a)=a_{\psi\Psi}\otimes b^\Psi c^\psi &\lefteqn{\forall b,c\in B, \forall a\in A;}\\
\label{tm5}   &\psi(b\otimes ad)=a_{\psi}d_\Psi\otimes b^{\psi\Psi} &\lefteqn{\forall b\in B, \forall a,d\in A.}
\end{eqnarray}

\begin{rem}\label{remnew}
(1) The conditions (\ref{tm2})--(\ref{tm5}) have appeared in \cite{CIMZ} and \cite{CMZ}.

(2) The conditions (\ref{tm4}) and (\ref{tm5}) say that we have the following commutative diagrams
\begin{align*}
\xymatrix@C=3pc@R=2pc{
B \otimes B \otimes A \ar[r]^{\id_{B}\otimes \psi} \ar[d]^{m_{B}\otimes \id_A}  & B \otimes A \otimes B \ar[r]^{\psi \otimes \id_{B}}  & A \otimes B \otimes B \ar[d]^{\id_A\otimes m_{B}}\\
B \otimes A \ar[rr]^{\psi}  &&A \otimes B,
}
\end{align*}
\begin{align*}
\xymatrix@C=3pc@R=2pc{
B \otimes A \otimes A \ar[r]^{\psi\otimes \id_{A}} \ar[d]^{\id_{B}\otimes m_{A}}  & A \otimes B \otimes A \ar[r]^{\id_{A}\otimes \psi}  & A \otimes A \otimes B \ar[d]^{m_{A}\otimes \id_B}\\
B \otimes A \ar[rr]^{\psi}  &&A \otimes B,
}
\end{align*}
where $m_A$ and $m_B$ are the multiplications of $A$ and $B$, respectively.
\end{rem}

With a twisting map $\psi: B\otimes A\to A\otimes B$, one may define a product on $A\circ B$ as follows: for $n,m\in\mathbb Z$, $a\in A_n,c\in A_m$, and $b\in B_n,d\in B_m$,
\begin{equation}\label{tsp}
  (a\otimes b)(c\otimes d)=ac_\psi\otimes b^\psi d.
\end{equation}
Then we have the following proposition, whose proof is based on \cite[Theorem 2.5]{CIMZ} (see also \cite[Theorem 7 in Chapter 2]{CMZ}).

\begin{prop}\label{prop-tsp} Let $A$ and $B$ be $\mathbb Z$-graded algebras, and let $\psi:B\otimes A\to A\otimes B$ be a twisting map. With the product defined by (\ref{tsp}), $A\circ B$ is a $\mathbb Z$-graded algebra.
\end{prop}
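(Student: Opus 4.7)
The plan is to check the three ring axioms for the operation defined by (\ref{tsp}): compatibility with the grading, a two-sided identity, and associativity. The graded condition is immediate from (\ref{tm1}): for $a\in A_n$, $b\in B_n$, $c\in A_m$, $d\in B_m$ one has $\psi(b\otimes c)\in A_m\otimes B_n$, so $ac_\psi\otimes b^\psi d\in A_{n+m}\otimes B_{n+m}=(A\circ B)_{n+m}$. The unit is $1_A\otimes 1_B$: condition (\ref{tm2}) gives $\psi(1_B\otimes c)=c\otimes 1_B$, so $(1_A\otimes 1_B)(c\otimes d)=c\otimes d$, and (\ref{tm3}) gives $\psi(b\otimes 1_A)=1_A\otimes b$, so $(a\otimes b)(1_A\otimes 1_B)=a\otimes b$.

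The substantive step is associativity. I would pick homogeneous elements $a\otimes b$, $c\otimes d$, $e\otimes f$ and expand both sides via (\ref{tsp}). The left-hand side $((a\otimes b)(c\otimes d))(e\otimes f)=(ac_\psi\otimes b^\psi d)(e\otimes f)$ requires a second application of $\psi$ to $(b^\psi d)\otimes e$, and by (\ref{tm4}) (the first hexagon of Remark~\ref{remnew}) this factors as: first apply $\psi$ to $d\otimes e$, producing $e_\Psi\otimes d^\Psi$, then apply $\psi$ to $b^\psi\otimes e_\Psi$, producing $(e_\Psi)_\Phi\otimes(b^\psi)^\Phi$; assembling gives $ac_\psi(e_\Psi)_\Phi\otimes(b^\psi)^\Phi d^\Psi f$. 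The right-hand side $(a\otimes b)((c\otimes d)(e\otimes f))=(a\otimes b)(ce_\Psi\otimes d^\Psi f)$ requires $\psi$ applied to $b\otimes(ce_\Psi)$, and by (\ref{tm5}) (the second hexagon of Remark~\ref{remnew}) this factors as: first apply $\psi$ to $b\otimes c$, producing $c_\psi\otimes b^\psi$, then apply $\psi$ to $b^\psi\otimes e_\Psi$, producing $(e_\Psi)_\Phi\otimes(b^\psi)^\Phi$; this yields exactly the same expression $ac_\psi(e_\Psi)_\Phi\otimes(b^\psi)^\Phi d^\Psi f$, so the two products agree.

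The only real obstacle is notational: three independent invocations of $\psi$ have to be tracked through nested Sweedler-style tags without confusing their outputs. To keep the bookkeeping honest I would rewrite each invocation as a composition of tensor maps and read the hexagons of Remark~\ref{remnew} as identities of maps $B\otimes B\otimes A\to A\otimes B$ and $B\otimes A\otimes A\to A\otimes B$, from which the equality of the two iterates of $\psi$ required by the associativity equation is transparent. As a cleaner alternative, the proposition can be deduced from \cite[Theorem~2.5]{CIMZ}: that result, combined with (\ref{tm1}), makes $A\otimes B$ into a $\mathbb Z\times\mathbb Z$-bigraded algebra, of which $A\circ B=\bigoplus_i A_i\otimes B_i$ is visibly the diagonal graded subalgebra.
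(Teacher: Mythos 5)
Your proposal is correct, and its substance overlaps with the paper's proof while being organized differently. The paper's argument is essentially a citation: it observes that conditions (\ref{tm2})--(\ref{tm5}) are exactly the hypotheses of \cite[Theorem 2.5]{CIMZ}, concludes associativity of the product from that theorem, and then notes that (\ref{tm1}) makes the product respect the grading of $A\circ B$. Your ``cleaner alternative'' at the end is therefore precisely the paper's route (and your remark that $A\circ B$ is the diagonal graded subalgebra of the bigraded $A\#_\psi B$ makes explicit the restriction step that the paper leaves implicit). What you do differently in your main argument is to reprove the associativity statement from scratch: the unit checks via (\ref{tm2}), (\ref{tm3}) and the three-fold expansion in which (\ref{tm4}) resolves $\psi\bigl((b^\psi d)\otimes e\bigr)$ on the left and (\ref{tm5}) resolves $\psi\bigl(b\otimes(ce_\Psi)\bigr)$ on the right, both collapsing to $ac_\psi(e_\Psi)_\Phi\otimes(b^\psi)^\Phi d^\Psi f$, are exactly the computation underlying the cited theorem, and your bookkeeping of the nested tags is consistent with (\ref{tm4}) and (\ref{tm5}) as stated. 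Your suggestion to recast the verification at the level of the hexagon diagrams of Remark \ref{remnew}(2) is also the right way to make the suppressed summations rigorous. The direct computation buys self-containedness at the cost of length; the paper's citation is shorter but outsources the only nontrivial point.
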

\begin{proof} As mentioned in Remark \ref{remnew}(1), the conditions (\ref{tm2})--(\ref{tm5}) are equivalent to the conditions in \cite[Theorem 2.5(3)]{CIMZ}. Hence, by \cite[Theorem 2.5]{CIMZ}, $A\circ B$ is an associative algebra. Note that $A\circ B$ is a graded vector space with homogeneous component $(A\circ B)_n=A_n\otimes B_n$ for $n\in \mathbb Z$. The condition (\ref{tm1}) ensures that the product defined by (\ref{tsp}) preserves the grading of $A\circ B$.
\end{proof}

Henceforth, we write
\[A\circ_\psi B\]
for the graded algebra obtained in the above proposition, and we call $A\circ_\psi B$ the \emph{twisted Segre product} of $A$ and $B$ with respect to $\psi$. If $\psi$ is the flip map, i.e., $\psi(b\otimes a)=a\otimes b$ for all $a\in A$ and $b\in B$, then $A\circ_\psi B$ is the usual Segre product of $A$ and $B$, so we simply write $A\circ B$ instead of $A\circ_\psi B$.

\begin{rem}\label{rem-tsp}
(1) Let $A$ and $B$ be connected graded algebras which are generated in degree $1$. Assume $\psi:B\otimes A\to A\otimes B$ is a twisting map. By Conditions (\ref{tm1}), (\ref{tm4}), and (\ref{tm5}), one sees that $\psi$ is determined by its restriction to $B_1\otimes A_1$.

(2) Let $V$ and $U$ be finite dimensional vector spaces, and let $\psi_0:U\otimes V\to V\otimes U$ be a bijective linear map. By Conditions (\ref{tm4}) and (\ref{tm5}), we see that $\psi_0$ induces uniquely a twisting map
\[\psi_T:T(U)\otimes T(V)\longrightarrow T(V)\otimes T(U).\]
Hence we have the twisted Segre product $T(V)\circ_{\psi_T} T(U)$ with respect to $\psi_T$.
We can define a unique algebra homomorphism
\begin{equation}\label{eq-tenalg1}
  \Xi:T(V\otimes U)\longrightarrow T(V)\circ_{\psi_T} T(U)
\end{equation}
such that $\Xi(v\otimes u)=v\otimes u$ for $v\in V,u\in U$.
Since $\psi_{T}$ is a bijective map, it follows that $\Xi$ is indeed an isomorphism of graded algebras. Therefore, we obtain
\begin{equation}\label{eq-tenalg2}
  T(V\otimes U)\cong T(V)\circ_{\psi_T} T(U)\cong  T(V)\circ T(U).
\end{equation}

(3) Let $A$ and $B$ be graded algebras, and let $\psi:B\otimes A\to A\otimes B$ be a twisting map. By \cite[Theorem 7 in Chapter 2]{CMZ}, we obtain a \emph{smash product} of $A$ and $B$, denoted by $A\#_\psi B$, whose underlying vector space is equal to $A\otimes B$ and whose multiplication is defined by
$(a\otimes b)(c\otimes d)=ac_\psi\otimes b^\psi d$, for $a,c\in A$ and $b,d\in B$. Then we see that $A\circ_\psi B$ is an (ungraded) subalgebra of $A\#_\psi B$.
Note that $A\#_\psi B$ is also called a \emph{twisted tensor product} of $A$ and $B$ in some literature (\cite{CSV, CG, JPS, WS, SZL} for example).
\end{rem}

The next lemma is straightforward.

\begin{lem}\label{lem-tspmorp} Let $\psi:B\otimes A\to A\otimes B$ and $\psi':B'\otimes A'\to A'\otimes B$ be two twisting maps. Assume $f:A\to A'$ and $g:B\to B'$ are algebra homomorphisms such that the following diagram commutes
$$\xymatrix{
  B\otimes A \ar[d]_{g\otimes f} \ar[r]^{\psi} & A\otimes B \ar[d]^{f\otimes g} \\
  B'\otimes A' \ar[r]^{\psi'} & A'\otimes B'.   }$$
Then $h_{fg}: A\circ_\psi B\to A'\circ_{\psi'}B'$ defined by $h_{fg}(a \otimes b)=f(a) \otimes g(b)$ is an algebra homomorphism.
\end{lem}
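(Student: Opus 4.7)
The statement is essentially a routine functoriality check, so my plan is to verify directly that $h_{fg}$ respects the unit and the multiplication formula \eqref{tsp}. First I would note that, since $f$ and $g$ are graded algebra homomorphisms, the map $h_{fg}= f\otimes g$ restricts on each homogeneous component to $A_n\otimes B_n \to A'_n\otimes B'_n$ and is therefore a well-defined degree-preserving linear map $A\circ_\psi B\to A'\circ_{\psi'} B'$. Unit preservation is immediate: $h_{fg}(1_A\otimes 1_B)=f(1_A)\otimes g(1_B)=1_{A'}\otimes 1_{B'}$.

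The only real content is the multiplicativity of $h_{fg}$. For homogeneous $a\in A_n$, $c\in A_m$, $b\in B_n$, $d\in B_m$, I would compute both sides of the desired identity. On one hand,
\[
h_{fg}\bigl((a\otimes b)(c\otimes d)\bigr)=h_{fg}(ac_\psi\otimes b^\psi d)=f(a)f(c_\psi)\otimes g(b^\psi)g(d),
\]
using that $f$ and $g$ are algebra homomorphisms. On the other hand,
\[
h_{fg}(a\otimes b)\,h_{fg}(c\otimes d)=(f(a)\otimes g(b))(f(c)\otimes g(d))=f(a)f(c)_{\psi'}\otimes g(b)^{\psi'}g(d).
\]
Therefore multiplicativity reduces to the equality
\[
f(c_\psi)\otimes g(b^\psi)=f(c)_{\psi'}\otimes g(b)^{\psi'},
\]
which is exactly the relation $(f\otimes g)\circ\psi=\psi'\circ(g\otimes f)$ expressed in the Sweedler-type notation of Section \ref{sect-s}. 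This is precisely the commutativity of the given diagram, evaluated at $b\otimes c\in B\otimes A$.

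There is no real obstacle here: the only thing to be careful about is matching the two notational conventions, namely writing the hypothesis $(f\otimes g)\psi=\psi'(g\otimes f)$ in the $v_\psi\otimes u^\psi$ form introduced at the beginning of Section \ref{sect-s}, and then recognizing that this is exactly what appears when one compares $f(c_\psi)\otimes g(b^\psi)$ with $f(c)_{\psi'}\otimes g(b)^{\psi'}$. Once the bookkeeping is done, the proof is a three-line calculation, which is why the authors call the lemma straightforward.
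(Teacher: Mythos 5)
Your proof is correct and is exactly the routine verification the paper has in mind: the authors state the lemma without proof ("straightforward"), and your direct check that multiplicativity reduces, via the Sweedler-type notation, to the hypothesis $(f\otimes g)\circ\psi=\psi'\circ(g\otimes f)$ is the intended argument.
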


 Let $V$ and $U$ be finite dimensional vector spaces.
 Fix a basis $x_1,\dots, x_n$ for $V$ and a basis $y_1,\dots, y_m$ for $U$ so that $T(V) = k\<x_1,\dots, x_n\>$ and $T(U) = k\<y_1,\dots, y_m\>$. Let $A=k\<x_1,\dots, x_n\>/I_A$ and $B=k\<y_1,\dots, y_m\>/I_B$ be connected graded algebras generated in degree $1$, and let $\psi:B\otimes A\to A\otimes B$ be a twisting map.
 Then we have
\begin{align*}
\psi( y_i \otimes a) = \sum_{j=1}^{m} a_{ij} \otimes y_j
\end{align*}
for some $a_{ij} \in A$.
By Condition (\ref{tm5}) and the linear independence of $y_1,\dots, y_m$, one can check that
\begin{equation}\label{eq-sigma}
\sigma_\psi = (\sigma_{ij}) : A \longrightarrow M_m(A)
\end{equation}
defined by $\sigma_{ij}(a)=a_{ij}$ is an algebra homomorphism.
Moreover, since $\psi$ is bijective, $\sigma_\psi$  is \emph{invertible} in the following sense (see \cite[\S 5.5]{JPS}, \cite[Definition 4]{WS}): there is an algebra homomorphism $\tau=(\tau_{ij}): A \to M_m(A)$ such that
\[
\sum_{k=1}^{m}\tau_{ki}(\sigma_{jk}(a))=\sum_{k=1}^{m}\sigma_{ki}(\tau_{jk}(a))= \delta_{ij}a
\]
for any $a \in A$, where $\delta_{ij}$ denotes the Kronecker delta.
The invertibility of $\sigma_\psi$ plays an important role in the study of smash products $A\#_\psi B$ (see \cite{JPS, WS, SZL} for example).

In Section \ref{sec-2poly}, we will focus on the following special case.

\begin{dfn}\label{def-diag}
Let $A=k\<x_1,\dots, x_n\>/I_A$ and $B=k\<y_1,\dots, y_m\>/I_B$ be connected graded algebras generated in degree $1$. Let $\psi:B\otimes A\to A\otimes B$ be a twisting map, and $\sigma_\psi=(\sigma_{ij}): A \to M_m(A)$ the algebra homomorphism given in
(\ref{eq-sigma}).
We say that $\psi$ is \emph{diagonal} if $\sigma_{ij}=0$ for $i \neq j$.
% % %Moreover, if $\psi$ is diagonal, then we say that the twisted Segre product $A\circ_\psi B$ is \emph{diagonal}.
\end{dfn}

The flip map $\psi:B\otimes A\to A\otimes B$ is clearly an example of a diagonal twisting map.

% % % % % % % % % % % % % % % % % % % % % % %
\section{Quotient categories of twisted Segre products}\label{sect-d}
% % % % % % % % % % % % % % % % % % % % % % %

In this section, we will prove that if a twisted Segre product of two noetherian Koszul AS-regular algebras is
noetherian, then it is a noncommutative graded isolated singularity. We need some preparations.

\subsection{Densely graded algebras}

This subsection is devoted to giving a version of Dade's Theorem for densely $\mathbb Z$-graded algebras.

\begin{dfn}
Let $S=\bigoplus_{n\in \mathbb Z}S_n$ be a $\mathbb Z$-graded algebra. We say that $S$ is a \emph{densely graded algebra} if $\dim_k(S_n/S_iS_{n-i})<\infty$ for all $n$ and $i$.
\end{dfn}
The concept of densely graded algebras was firstly introduced in \cite{HVOZ} with more restrictions. Densely graded algebras are generalizations of strongly graded algebras. We have the following properties.

\begin{lem}\label{lem-d1}
Let $S$ be a $\mathbb Z$-graded algebra. Assume that $S_0$ is a noetherian algebra and $S_i$ is finitely generated both as a left $S_0$-module and a right $S_0$-module for every $i\in\mathbb Z$.
Then we have the following properties:
 \begin{enumerate}
   \item  $S$ is a densely graded algebra if and only if $\dim_k(S_0/S_iS_{-i})<\infty$ for all $i$.
   \item Assume that $S$ is a densely graded algebra, and $M=\bigoplus_{n\in\mathbb Z}M_n$ is a graded right $S$-module such that $M_n$ is a finitely generated $S_0$-module for every $n$. Then $M_n$ is finite dimensional over $k$ for every $n$ if and only if $M_0$ is finite dimensional over $k$.
 \end{enumerate}
\end{lem}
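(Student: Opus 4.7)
For part (i), the ``only if'' direction is immediate by setting $n=0$ in the densely graded condition. For the converse, I assume $\dim_k(S_0/S_iS_{-i})<\infty$ for all $i$ and fix arbitrary $n,i\in\mathbb{Z}$. The plan is to exploit the associative inclusion $(S_iS_{-i})S_n = S_i(S_{-i}S_n)\subseteq S_iS_{n-i}$, which yields a canonical surjection $S_n/(S_iS_{-i})S_n \twoheadrightarrow S_n/S_iS_{n-i}$, reducing the problem to bounding the source. I identify the source with $(S_0/S_iS_{-i})\otimes_{S_0}S_n$; since $S_n$ is finitely generated as a left $S_0$-module, any surjection $S_0^r\twoheadrightarrow S_n$ tensors up to exhibit $(S_0/S_iS_{-i})\otimes_{S_0}S_n$ as a quotient of $(S_0/S_iS_{-i})^r$, which is finite dimensional by hypothesis.

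For part (ii), the ``only if'' direction is trivial. For the nontrivial direction, assume $M_0$ is finite dimensional and fix $n\neq 0$. The strategy is to sandwich $M_n$ between two finite dimensional pieces by means of the filtration $M_n\supseteq M_n\cdot(S_{-n}S_n)\supseteq 0$. The top quotient $M_n/M_n(S_{-n}S_n)$ is finitely generated as a right module over $S_0/S_{-n}S_n$ (since $M_n$ is finitely generated over $S_0$), and the latter algebra is finite dimensional by part (i), so the quotient is finite dimensional over $k$. For the bottom piece, I use the inclusion $M_n(S_{-n}S_n)=(M_nS_{-n})S_n\subseteq M_0\cdot S_n$, reducing the task to showing that $M_0\cdot S_n$ is finite dimensional. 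Choosing generators $S_n=\sum_{j=1}^{p}S_0 s_j$ as a left $S_0$-module and using associativity of the right action gives $M_0\cdot S_n=\sum_{j}M_0 s_j$, and each $M_0 s_j$ is the $k$-linear image of $M_0$ under right multiplication by $s_j$, hence has dimension at most $\dim_k M_0<\infty$.

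The main point to flag is that the two one-sided finiteness hypotheses on $S_n$ play distinct and asymmetric roles, and the proof will not go through without both. In part (i), left finite generation of $S_n$ over $S_0$ is what controls the tensor product $(S_0/S_iS_{-i})\otimes_{S_0}S_n$. In part (ii), right finite generation of $M_n$ over $S_0$ controls the top of the filtration through the $S_0/S_{-n}S_n$-module structure, whereas left finite generation of $S_n$ over $S_0$ controls the bottom piece via the identity $M_0\cdot S_n=\sum_j M_0 s_j$. Once one has noticed this bookkeeping and the key inclusion $M_n(S_{-n}S_n)\subseteq M_0 S_n$, both parts are short; the only real content is choosing the right filtration.
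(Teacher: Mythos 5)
Your proof is correct and follows essentially the same argument as the paper's: in (i) you tensor the finite-dimensional quotient $S_0/S_iS_{-i}$ against the finitely generated $S_0$-module $S_n$ (multiplying on the left where the paper multiplies on the right and then reindexes), and in (ii) you use the same filtration $M_n\supseteq M_n(S_{-n}S_n)$, bounding the bottom piece by $M_0S_n$ where the paper bounds $(M_nS_{-n})S_n$ directly via the same choice of left generators of $S_n$. The differences are only cosmetic; the hypotheses are used in the same places.
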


\begin{proof}
(i) Assume $\dim_k(S_0/S_iS_{-i})<\infty$ for all $i$. For every $i\in\mathbb Z$, we have an exact sequence
\begin{equation}\label{eq-d2}
  0\longrightarrow S_iS_{-i}\longrightarrow S_0\longrightarrow S_0/S_iS_{-i}\longrightarrow 0.
\end{equation}
Applying the functor $S_n\otimes_{S_0}-$ to the above sequence, we obtain the following exact sequence
\begin{equation}\label{eq-d1}
  S_n\otimes_{S_0} (S_iS_{-i})\longrightarrow S_n\longrightarrow S_n\otimes_{S_0}(S_0/S_iS_{-i})\longrightarrow 0.
\end{equation}
Since $S_n$ is finitely generated as a right $S_0$-module and $S_0/S_iS_{-i}$ is finite dimensional, it follows that $S_n\otimes_{S_0}(S_0/S_iS_{-i})$ is finite dimensional. Note that the image of the left morphism in the exact sequence (\ref{eq-d1}) is equal to $S_n(S_iS_{-i})$. Hence $S_n/(S_n(S_iS_{-i}))$ is finite dimensional. Since $S_nS_i\subseteq S_{n+i}$, it follows $S_n(S_iS_{-i})\subseteq S_{n+i}S_{-i}$ and hence $S_n/(S_{n+i}S_{-i})$ is a quotient module of $S_n/(S_n(S_iS_{-i}))$. Therefore $S_n/(S_{n+i}S_{-i})$ is finite dimensional for every $i$.

(ii) Assume that $M_0$ is finite dimensional. Similar to the proof of (i), if we apply the functor $M_n\otimes_{S_0}-$ to the exact sequence
\[0\longrightarrow S_{-n}S_{n}\longrightarrow S_0\longrightarrow S_0/S_{-n}S_{n}\longrightarrow 0,\]
we obtain that $M_n/(M_nS_{-n}S_n)$ is finite dimensional. Since $M_nS_{-n}\subseteq M_0$, it follows that $M_nS_{-n}$ is finite dimensional. Since $S_n$ is a finitely generated $S_0$-module, it follows that $M_nS_{-n}S_n$ is finite dimensional. Hence $M_n$ is finite dimensional.
\end{proof}

\begin{cvn}\label{cvn-den}
Throughout the rest of this subsection, we always assume that $S$ is a densely graded algebra, $S_0$ is noetherian, and $S_i$ is finitely generated both as a left $S_0$-module and as a right $S_0$-module for every $i\in\mathbb Z$.
\end{cvn}

A homogeneous element $x$ of a graded module $M \in \GrMod S$ is called a \emph{locally torsion element} if the graded submodule $xS$ is locally finite; i.e., $\dim_k (xS)_n<\infty$ for all $n\in\mathbb Z$. A graded module $M \in \GrMod S$ is called a \emph{locally torsion module} if every homogeneous element of $M$ is a locally torsion element. Let $\LTors S$ be the full subcategory of $\GrMod S$ consisting of locally torsion modules. Since $S_0$ is noetherian and $S$ is densely graded, $\LTors S$ is a Serre subcategory of $\GrMod S$. Hence we have the quotient category
\[\QGr_{\L} S:=\GrMod S/\LTors S.\]
Let $\pi_{\L}:\GrMod S\to \QGr_{\L} S$ be the projection functor. It admits a right adjoint functor $\omega_{\L}:\QGr_{\L} S\to \GrMod S$.

We write $\lgrmod S$ for the full subcategory of $\GrMod S$ consisting of graded modules $M$ such that $M_i$ is a finitely generated $S_0$-module for all $i$, and let $\lltors S$ be the full subcategory of $\lgrmod S$ consisting of all locally torsion modules.
%Note that for an object $M \in \lgrmod S$, $M\in \lltors S$ if and only if $M_0$ is finite dimensional by Lemma \ref{lem-d1}(ii).
We also have the quotient category
\[\qlgr_{\l} S:=\lgrmod S/\lltors S.\]

An element $y$ of a module $N \in \Mod S_0$ is called a {\it torsion element} if $y(S_0)$ is finite dimensional. If every element of $N \in \Mod S_0$ is a torsion element, then $N$ is called a {\it torsion module}.
We write $\Tors S_0$ (resp. $\tors S_0$) for the full subcategory of $\Mod S_0$ (resp. $\mod S_0$) consisting of all torsion modules.
Note that, for a module $N \in \Mod S_0$, $N \in \tors S_0$ if and only if $N$ is finite dimensional.
We have the quotient categories
\[ \QMod S_0:=\Mod S_0/\Tors S_0 \quad \text{and} \quad
\qmod S_0:=\mod S_0/\tors S_0.\]
We refer to \cite[Chapter 4]{P} for detailed properties of the quotient categories. We use $\pi_0:\Mod S_0 \to \QMod S_0$ to denote the projection functor and $\omega_0:\QMod S_0 \to \Mod S_0$ to denote its right adjoint functor.

\begin{lem} \label{lem-d2}
Let $M$ be a graded right $S$-module. Then
the following are equivalent:
\begin{enumerate}
\item $M$ is a locally torsion $S$-module;
\item $M_i$ is a torsion $S_0$-module for every $i$;
\item For every homogeneous element $x \in M$, $xS_0$ is finite dimensional;
\item $M_0$ is a torsion $S_0$-module.
\end{enumerate}
\end{lem}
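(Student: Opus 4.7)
The strategy is to reduce the four conditions to a single nontrivial implication, exploiting the dense grading only where it is indispensable. First I would record the trivial observations: (ii) $\Leftrightarrow$ (iii) is immediate from the definition of a torsion $S_0$-module, since every homogeneous element of $M$ lies in some $M_i$; the implication (i) $\Rightarrow$ (iii) holds because, for homogeneous $x$ of degree $d$, one has $(xS)_d = xS_0$, which is finite-dimensional by local finiteness of $xS$; and (ii) $\Rightarrow$ (iv) is the case $i=0$. What remains is to establish (ii) $\Rightarrow$ (i) and (iv) $\Rightarrow$ (ii).

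For (ii) $\Rightarrow$ (i), given a homogeneous element $x \in M_j$, I need $\dim_k xS_{n-j} < \infty$ for every $n$. Using Convention \ref{cvn-den}, write $S_{n-j} = \sum_{\gamma=1}^{q} S_0 v_\gamma$ as a left $S_0$-module. Then $xS_{n-j} = \sum_\gamma (xS_0) v_\gamma$, and each summand is the image of the $k$-linear map $r \mapsto rv_\gamma$ on the finite-dimensional space $xS_0$, so the total is finite-dimensional.

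The main obstacle is (iv) $\Rightarrow$ (ii), and this is the only place where the densely graded hypothesis is essential. Given $y \in M_i$, I would apply $y\cdot(-)$ to the exact sequence
\[ 0 \longrightarrow S_{-i}S_i \longrightarrow S_0 \longrightarrow S_0/S_{-i}S_i \longrightarrow 0, \]
whose cokernel is finite-dimensional by Lemma \ref{lem-d1}(i) (applied with $i$ replaced by $-i$). This yields $yS_0 / y(S_{-i}S_i)$ finite-dimensional, reducing the problem to showing $(yS_{-i})S_i$ is finite-dimensional. Since $S_{-i}$ is finitely generated as a right $S_0$-module, $yS_{-i} \subseteq M_0$ is a sum of finitely many cyclic $S_0$-submodules of $M_0$, so hypothesis (iv) gives $\dim_k yS_{-i} < \infty$. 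Finally, picking a basis $z_1, \ldots, z_r$ of $yS_{-i}$ and writing $S_i = \sum_\alpha S_0 u_\alpha$ on the left, each $z_\beta S_i = \sum_\alpha (z_\beta S_0) u_\alpha$ is finite-dimensional by a second application of (iv), so $(yS_{-i})S_i$ is finite-dimensional and (ii) follows.
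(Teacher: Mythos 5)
Your proof is correct. It differs from the paper's in how the nontrivial implication is organized: the paper also dispatches (i)$\Rightarrow$(ii) and (ii)$\Leftrightarrow$(iii)$\Rightarrow$(iv) as easy, but then proves (iv)$\Rightarrow$(i) in a single step by passing to the cyclic graded submodule $xS$, observing that its homogeneous components are finitely generated over $S_0$ and that $(xS)_0$, being a finitely generated torsion submodule of $M_0$, is finite dimensional, and then invoking Lemma \ref{lem-d1}(ii). You instead split the work into (iv)$\Rightarrow$(ii) and (ii)$\Rightarrow$(i) and never use Lemma \ref{lem-d1}(ii): your (iv)$\Rightarrow$(ii) essentially inlines the density argument of that lemma at the level of the cyclic module $yS_0$, surjecting $S_0/S_{-i}S_i$ onto $yS_0/y(S_{-i}S_i)$ and bounding $y(S_{-i}S_i)=(yS_{-i})S_i$ by two applications of (iv) together with the right finite generation of $S_{-i}$ and the left finite generation of $S_i$. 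What your route buys is the explicit observation that the densely graded hypothesis enters only in (iv)$\Rightarrow$(ii), while your (ii)$\Rightarrow$(i) needs nothing beyond the finite generation of each $S_m$ as a left $S_0$-module; what the paper's route buys is brevity, since Lemma \ref{lem-d1}(ii) is already available. One cosmetic point: the finite dimensionality of $S_0/S_{-i}S_i$ is immediate from the definition of densely graded (take $n=0$ there), so your citation of Lemma \ref{lem-d1}(i) is harmless but uses only its trivial direction.
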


\begin{proof}
Since $xS_0=(xS)_i$ for every $x \in M_i$, we have $\textrm{(i)}\Rightarrow \textrm{(ii)}$.
Moreover, it is easy to see that $\textrm{(iii)} \Leftrightarrow \textrm{(ii)} \Rightarrow \textrm{(iv)}$ hold, so we only need to prove $\textrm{(iv)}\Rightarrow \textrm{(i)}$.
Assume $M_0$ is a torsion $S_0$-module. Let $x\in M_n$ be a homogeneous element. Since, by assumption, $S_{i}$ is a finitely generated $S_0$-module for all $i$, it follows that each homogeneous component of $xS$ is a finitely generated $S_0$-module. On the other hand, $(xS)_0$ is an $S_0$-submodule of $M_0$, so $(xS)_0$ is a torsion $S_0$-module. Since $(xS)_0$ is a finitely generated $S_0$-module, it follows that $(xS)_0$ is finite dimensional. By Lemma \ref{lem-d1}(ii), each homogeneous component of $xS$ is finite dimensional. Hence $x$ is a locally torsion element.
\end{proof}

By Lemma \ref{lem-d2},
we see that, for an object $M \in \lgrmod S$, $M\in \lltors S$ if and only if $M_0$ is finite dimensional.

\begin{cor}\label{cor-d1}
Let $M$ be a graded right $S$-module. Let $\mu:M_0\otimes_{S_0} S \to M$ be the graded right $S$-module morphism induced by the right $S$-action. Then both $\Ker(\mu)$ and $\Coker(\mu)$ are objects in $\LTors S$.
\end{cor}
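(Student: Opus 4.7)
The plan is to reduce the assertion to Lemma \ref{lem-d2}, which tells us that a graded right $S$-module lies in $\LTors S$ as soon as its degree-zero component is a torsion $S_0$-module. I will exhibit an even stronger property: the degree-zero components of both $\Ker(\mu)$ and $\Coker(\mu)$ vanish.

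First I would note that $M_0 \otimes_{S_0} S$ inherits a $\ZZ$-grading from $S$, with $n$-th component $M_0 \otimes_{S_0} S_n$ (since $M_0$ sits in degree zero as an $S_0$-module), and that $\mu(m \otimes s) = ms$ is a morphism of graded right $S$-modules with respect to this grading. Its degree-zero restriction is the canonical isomorphism
\[
\mu_0 \colon M_0 \otimes_{S_0} S_0 \xrightarrow{\ \sim\ } M_0, \qquad m \otimes s_0 \longmapsto ms_0,
\]
whose inverse sends $m$ to $m \otimes 1$. Consequently $(\Ker \mu)_0 = 0 = (\Coker \mu)_0$, which are trivially torsion $S_0$-modules. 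Applying the implication $(\textrm{iv})\Rightarrow(\textrm{i})$ of Lemma \ref{lem-d2} then yields $\Ker(\mu), \Coker(\mu) \in \LTors S$.

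There is no real obstacle: the corollary is essentially a one-step consequence of Lemma \ref{lem-d2}. The only minor verifications are that $\mu$ is graded (immediate from the grading $(M_0 \otimes_{S_0} S)_n = M_0 \otimes_{S_0} S_n$) and that Lemma \ref{lem-d2} applies to not-necessarily-finitely-generated modules (its proof only ever invokes the cyclic submodule $xS$, whose homogeneous components are automatically finitely generated over $S_0$ by Convention \ref{cvn-den}).
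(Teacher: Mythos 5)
Your argument is correct and is essentially the paper's own proof: both identify $(M_0\otimes_{S_0}S)_0\cong M_0$, observe that $\mu_0$ is an isomorphism so that $(\Ker\mu)_0=(\Coker\mu)_0=0$, and then invoke the implication $(\textrm{iv})\Rightarrow(\textrm{i})$ of Lemma \ref{lem-d2}. Your extra remarks (gradedness of $\mu$ and applicability of Lemma \ref{lem-d2} to non-finitely-generated modules) are accurate but not needed beyond what the paper already does.
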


\begin{proof}
Note that $(M_0\otimes_{S_0}S)_0\cong M_0$. Hence the restriction of $\mu$ to the degree zero component of $M_0\otimes_{S_0} S$ is an isomorphism. Therefore the degree zero component of $\Ker(\mu)$ and that of $\Coker(\mu)$ are equal to zero. By Lemma \ref{lem-d2}, both $\Ker(\mu)$ and $\Coker(\mu)$ are objects in $\LTors S$.
\end{proof}

\begin{cor}\label{cor-d2}
Let $f:X\to Y$ be a morphism in $\Mod S_0$ such that $\Ker(f), \Coker(f) \in \Tors S_0$. Then $\pi_{\L}(f\otimes_{S_0}S):\pi_{\L}(X\otimes_{S_0} S) \to \pi_{\L}(Y\otimes_{S_0}S)$ is an isomorphism in $\QGr_{\L} S$.
\end{cor}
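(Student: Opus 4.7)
The plan is to show that both $\Ker(f\otimes_{S_0}\id_S)$ and $\Coker(f\otimes_{S_0}\id_S)$ lie in $\LTors S$, which is enough because $\LTors S$ is a Serre subcategory of $\GrMod S$ and hence $\pi_{\L}$ inverts precisely the morphisms with kernel and cokernel in $\LTors S$. Throughout, I will detect membership in $\LTors S$ via the degree zero component, as allowed by Lemma \ref{lem-d2}. The cokernel is the easy half: right exactness of $-\otimes_{S_0}S$ gives $\Coker(f\otimes_{S_0}\id_S)\cong\Coker(f)\otimes_{S_0}S$, whose degree zero component is $\Coker(f)\otimes_{S_0}S_0\cong\Coker(f)\in\Tors S_0$, so $\Coker(f\otimes_{S_0}\id_S)\in\LTors S$.

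For the kernel, I would factor $f$ as $X\overset{p}{\twoheadrightarrow}\Im(f)\overset{g}{\hookrightarrow}Y$, so that $f\otimes_{S_0}\id_S=(g\otimes_{S_0}\id_S)\circ(p\otimes_{S_0}\id_S)$ with $p\otimes_{S_0}\id_S$ surjective. A direct diagram chase then yields the exact sequence
\[0\longrightarrow\Ker(p\otimes_{S_0}\id_S)\longrightarrow\Ker(f\otimes_{S_0}\id_S)\longrightarrow\Ker(g\otimes_{S_0}\id_S)\longrightarrow 0,\]
so by the Serre property it suffices to place both outer terms in $\LTors S$. From the right exact sequence $\Ker(f)\otimes_{S_0}S\to X\otimes_{S_0}S\to\Im(f)\otimes_{S_0}S\to 0$, one sees that $\Ker(p\otimes_{S_0}\id_S)$ is a quotient of $\Ker(f)\otimes_{S_0}S$, whose degree zero component is $\Ker(f)\in\Tors S_0$. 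The long exact $\operatorname{Tor}$ sequence associated to $0\to\Im(f)\to Y\to\Coker(f)\to 0$ presents $\Ker(g\otimes_{S_0}\id_S)$ as a quotient of $\operatorname{Tor}_1^{S_0}(\Coker(f),S)$, whose degree zero component is $\operatorname{Tor}_1^{S_0}(\Coker(f),S_0)=0$ because $S_0$ is flat over itself. Two more applications of Lemma \ref{lem-d2} then finish the argument.

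The only mildly delicate point is the vanishing of the degree zero part of $\operatorname{Tor}_1^{S_0}(\Coker(f),S)$; this is immediate from the observation that tensoring a projective $S_0$-resolution of $\Coker(f)$ with $S$ and then extracting the degree zero component recovers the original resolution tensored with $S_0$. Otherwise the proof is a straightforward assembly of Lemma \ref{lem-d2} with the Serre property of $\LTors S$, so I do not anticipate any real obstacle.
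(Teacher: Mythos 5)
Your proposal is correct, but it is considerably more roundabout than the paper's argument for the kernel, even though both rest on the same key tool, namely the degree-zero criterion of Lemma \ref{lem-d2}. The paper simply observes that kernels and cokernels in $\GrMod S$ are computed componentwise and that in degree zero the map $\varphi=f\otimes_{S_0}S$ is just $f$ itself, because $(X\otimes_{S_0}S)_0\cong X$ and $(Y\otimes_{S_0}S)_0\cong Y$; hence $\Ker(\varphi)_0\cong\Ker(f)$ and $\Coker(\varphi)_0\cong\Coker(f)$ are torsion $S_0$-modules, and Lemma \ref{lem-d2} immediately puts $\Ker(\varphi)$ and $\Coker(\varphi)$ in $\LTors S$. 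Your treatment of the cokernel is essentially this. For the kernel you apparently worried that $-\otimes_{S_0}S$ is not left exact, so you factored $f$ through its image, produced the exact sequence $0\to\Ker(p\otimes_{S_0}S)\to\Ker(\varphi)\to\Ker(g\otimes_{S_0}S)\to0$, and controlled the outer terms via a quotient of $\Ker(f)\otimes_{S_0}S$ and a quotient of $\operatorname{Tor}_1^{S_0}(\Coker(f),S)$, whose degree-zero part vanishes; all of these steps are valid (the Serre property of $\LTors S$ and the componentwise grading of the tensored resolution do what you claim). But the non-exactness of $-\otimes_{S_0}S$ is irrelevant here: you only need the degree-zero component of $\Ker(\varphi)$, and that is the kernel of the degree-zero component of $\varphi$, i.e.\ of $f$. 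What your longer route buys is information about how all of $\Ker(\varphi)$ is built from $\Ker(f)\otimes_{S_0}S$ and a Tor term, which could be useful in finer situations, but for the statement at hand the paper's three-line argument suffices.
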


\begin{proof} Let $\varphi=f\otimes_{S_0}S$. We only need to prove $\Ker(\varphi),\Coker(\varphi)\ in \LTors S$. Note that $(X\otimes_{S_0} S)_0\cong X$ and $(Y\otimes_{S_0} S)_0\cong Y$. Hence $\Ker(\varphi)_0\cong\Ker(f)$ and $\Coker(\varphi)_0\cong \Coker(f)$ in $\Mod S_0$. Therefore, $\Ker(\varphi), \Coker(\varphi)\in\LTors S$ by Lemma \ref{lem-d2}.
\end{proof}

\begin{lem}\label{lem-d3}
Let $N\in\Mod S_0$ be a torsion-free module; that is, $N$ contains no torsion elements. Then $\omega_0\pi_0(N)\cong \omega_{\L}\pi_{\L}(\omega_0\pi_0(N)\otimes_{S_0}S)_0$.
\end{lem}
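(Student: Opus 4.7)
The plan is to set $N' := \omega_0\pi_0(N)$, $M := N'\otimes_{S_0} S$, and $Y := \omega_\L\pi_\L(M)$, and then prove directly that $Y_0 \cong N'$. First I would identify $M_0$: since $(N'\otimes_{S_0}S)_0 \cong N'\otimes_{S_0}S_0 = N'$, we have $M_0 = N'$. The localization unit $\eta_M\colon M \to Y$ has kernel and cokernel in $\LTors S$, so taking degree-zero components (an exact operation) and invoking Lemma \ref{lem-d2}, the induced morphism $N' = M_0 \to Y_0$ has kernel and cokernel in $\Tors S_0$. Because $N'$ is $\pi_0$-closed and hence torsion-free as an $S_0$-module, the kernel vanishes, yielding a short exact sequence $0 \to N' \xrightarrow{i} Y_0 \to T \to 0$ in $\Mod S_0$ with $T \in \Tors S_0$.

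The critical second step is to prove that $Y_0$ is itself torsion-free as an $S_0$-module. Given $y \in Y_0$ with $yS_0$ finite-dimensional, I would consider the cyclic graded $S$-submodule $yS \subseteq Y$. Its degree-zero component $(yS)_0 = yS_0$ is finite-dimensional, and each higher component $yS_n$ is finitely generated as a right $S_0$-module since $S_n$ is finitely generated as a right $S_0$-module by Convention \ref{cvn-den}. Lemma \ref{lem-d2} then gives $yS \in \LTors S$. But $Y = \omega_\L\pi_\L(M)$ is $\L$-closed and so contains no nonzero $\LTors$ submodule, forcing $y = 0$.

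To conclude, since $T \in \Tors S_0$, the inclusion $i$ becomes an isomorphism in $\QMod S_0$, so applying $\omega_0$ produces an isomorphism $\omega_0\pi_0(i)\colon N' = \omega_0\pi_0(N') \xrightarrow{\sim} \omega_0\pi_0(Y_0)$. Because $Y_0$ is torsion-free by the previous step, the unit $\eta_{Y_0}\colon Y_0 \to \omega_0\pi_0(Y_0)$ is injective, and naturality of the unit gives the identity $\eta_{Y_0} \circ i = \omega_0\pi_0(i)$. Since the right-hand side is an isomorphism and $\eta_{Y_0}$ is injective, both $\eta_{Y_0}$ and $i$ must be isomorphisms, whence $Y_0 \cong N'$.

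I expect the torsion-freeness step in the middle paragraph to be the main obstacle: it is precisely where $\L$-closedness inside $\GrMod S$ has to be transported down to a torsion-freeness statement inside $\Mod S_0$, and this transport relies essentially on the finite generation assumption of Convention \ref{cvn-den} via Lemma \ref{lem-d2}. The first and third paragraphs are then a formal kernel/cokernel analysis and a diagram chase using the universal property of the adjunction $\pi_0 \dashv \omega_0$.
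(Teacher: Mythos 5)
Your proof is correct, but its second half follows a genuinely different route from the paper's. Both arguments start identically: the unit $\nu\colon \omega_0\pi_0(N)\otimes_{S_0}S\to\omega_{\L}\pi_{\L}(\omega_0\pi_0(N)\otimes_{S_0}S)$ has kernel and cokernel in $\LTors S$, so by Lemma \ref{lem-d2} its degree-zero part has torsion kernel and cokernel in $\Mod S_0$, and the kernel vanishes because $\omega_0\pi_0(N)$ is torsion-free. To kill the cokernel, the paper embeds $\omega_0\pi_0(N)$ into the injective envelope $E$ of $N$, extends the inclusion along $\nu_0$ to a map $\theta\colon \omega_{\L}\pi_{\L}(\cdot)_0\to E$ using injectivity of $E$, shows $\theta$ is injective, and then concludes $\Coker(\nu_0)=0$ because it embeds into the torsion-free module $E/\omega_0\pi_0(N)$. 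You avoid the injective envelope altogether: you first prove that $Y_0=\omega_{\L}\pi_{\L}(\cdot)_0$ is torsion-free over $S_0$ (a degree-zero element $y$ with $\dim_k yS_0<\infty$ generates, via Lemma \ref{lem-d2} and the finite generation in Convention \ref{cvn-den}, a locally torsion submodule $yS$ of the $\LTors$-torsion-free module $\omega_{\L}\pi_{\L}(\cdot)$, and $y\in yS$ since $1\in S_0$), and then finish by a purely formal localization argument: $\pi_0(i)$ is invertible, $\eta_{N'}$ is invertible because $N'=\omega_0\pi_0(N)$ is closed, and $\eta_{Y_0}$ is injective by torsion-freeness, so naturality forces $i$ to be an isomorphism. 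What your route buys: besides dispensing with $E$ and the auxiliary map $\theta$, it explicitly justifies the torsion-freeness of $\omega_{\L}\pi_{\L}(\cdot)_0$, a fact the paper's proof invokes (when it asserts $\Ker(\theta)$ lies in a torsion-free module) without argument; what the paper's route buys is that it stays entirely inside $\Mod S_0$ once $\theta$ is built, needing no appeal to closedness of $N'$ beyond its construction inside $E$. Two cosmetic points: the naturality identity should read $\eta_{Y_0}\circ i=\omega_0\pi_0(i)\circ\eta_{N'}$ (harmless, since you identify $N'$ with $\omega_0\pi_0(N')$), and the torsion-freeness of $N'$ is most directly cited as the standard fact that the section functor $\omega_0$ takes values in torsion-free modules.
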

\begin{proof} Let $E$ be the injective envelope of $N$. Then $\omega_0\pi_0(N)$ is isomorphic to the maximal submodule $M$ of $E$ which contains $N$ and satisfies that $M/N$ is a torsion module. We identify $\omega_0\pi_0(N)$ with $M$, and write $\iota:\omega_0\pi_0(N)\to E$ for the inclusion map. Then $E/\iota(\omega_0\pi_0(N))$ is torsion-free. Let $\nu:\omega_0\pi_0(N)\otimes_{S_0}S\to \omega_{\L}\pi_{\L}(\omega_0\pi_0(N)\otimes_{S_0}S)$ be the adjunction map. Then both $\Ker(\nu)$ and $\Coker(\nu)$ are objects in $\LTors S$
(see \cite[Proposition 4.3 in Chapter 4]{P}).
 By Lemma \ref{lem-d2}, $\Ker(\nu)_0$ and $\Coker(\nu)_0$ are torsion $S_0$-modules. Since $\Ker(\nu)_0$ is a submodule of $(\omega_0\pi_0(N)\otimes_{S_0}S)_0\cong \omega_0\pi_0(N)$ which is torsion-free, it follows that $\Ker(\nu)_0=0$. Let $\nu_0$ be the restriction of $\nu$ to the degree zero component $(\omega_0\pi_0(N)\otimes_{S_0}S)_0\cong \omega_0\pi_0(N)$. Then there is a right $S_0$-module morphism $\theta:\omega_{\L}\pi_{\L}(\omega_0\pi_0(N)\otimes_{S_0}S)_0\to E$ such that $\iota=\theta\circ\nu_0$. Since $\iota$ is injective, $\Ker(\theta)\cap\Im(\nu_0)=0$. Hence $\Ker(\theta)$ is isomorphic to a submodule of $\Coker(\nu_0)$. Since $\Coker(\nu_0)=\Coker(\nu)_0$ is a torsion module, it follows that $\Ker(\theta)$ is also a torsion module. On the other hand, $\Ker(\theta)$ is a submodule of the torsion-free module $\omega_{\L}\pi_{\L}(\omega_0\pi_0(N)\otimes_{S_0}S)_0$. It follows that $\Ker(\theta)=0$, and hence $\theta$ is an injective morphism. Therefore $\theta$ induces an injective morphism $\overline{\theta}:\Coker(\nu)_0\to E/\iota(\omega_0\pi_0(N))$. Since $E/\iota(\omega_0\pi_0(N))$ is torsion-free, it follows $\Coker(\nu)_0=0$. Therefore $\nu_0$ is indeed an isomorphism.
\end{proof}

Now we ready to state the following version of Dade's Theorem for densely graded algebras, which first appeared in \cite{HVOZ} with the assumption that $S$ is a $G$-graded algebra, where $G$ is a finite group.

\begin{thm}\label{thm-d1}
Let $S$ be as Convention \ref{cvn-den}.
Then the natural functor $(-)_0: \GrMod S \longrightarrow \Mod S_0; M\mapsto M_0$ induces the following equivalences of abelian categories:
\[(-)_0: \QGr_{\L} S\overset{\cong}{\longrightarrow} \QMod S_0
\quad \text{and}\quad
(-)_0: \qlgr_{\l} S\overset{\cong}{\longrightarrow} \qmod S_0.\]
\end{thm}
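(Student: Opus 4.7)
The plan is to localize the adjoint pair $(-\otimes_{S_0} S,(-)_0)$ between $\Mod S_0$ and $\GrMod S$ at the two relevant Serre subcategories. Lemma~\ref{lem-d2} identifies $\LTors S$ with the class of graded modules whose degree-zero component is a torsion $S_0$-module, and Corollaries~\ref{cor-d1} and~\ref{cor-d2} furnish exactly the naturality needed for the unit and counit to become isomorphisms. The one subtle point is that $-\otimes_{S_0} S$ is only right exact, so well-definedness of the putative quasi-inverse on morphisms requires a short argument.

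I would first observe that $(-)_0:\GrMod S\to \Mod S_0$ is exact and sends $\LTors S$ into $\Tors S_0$ by Lemma~\ref{lem-d2}. The universal property of Serre quotients then produces an exact functor $\bar F:\QGr_{\L} S\to \QMod S_0$ with $\bar F\pi_{\L}\cong \pi_0(-)_0$. I would define the candidate quasi-inverse on objects by $\bar G(\pi_0 N):=\pi_{\L}(N\otimes_{S_0} S)$, regarding $N$ as concentrated in degree $0$; since $(N\otimes_{S_0} S)_0\cong N$, Lemma~\ref{lem-d2} guarantees that $N\in\Tors S_0$ implies $N\otimes_{S_0} S\in\LTors S$, so $\bar G$ makes sense on objects. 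To extend to morphisms I would represent a map $\pi_0 X\to \pi_0 Y$ by an $S_0$-linear $f:X'\to Y/Y''$ with $X/X'$ and $Y''$ in $\Tors S_0$, apply $-\otimes_{S_0} S$, and invoke Corollary~\ref{cor-d2} to see that the comparison arrows arising from $X'\hookrightarrow X$ and $Y\twoheadrightarrow Y/Y''$ are isomorphisms in $\QGr_{\L} S$; this gives a morphism $\bar G(\pi_0 X)\to \bar G(\pi_0 Y)$ independent of the chosen representative.

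With $\bar F$ and $\bar G$ in hand, I would verify the two natural isomorphisms. First, $\bar F\bar G(\pi_0 N)=\pi_0((N\otimes_{S_0} S)_0)=\pi_0 N$, naturally in $N$. Second, $\bar G\bar F(\pi_{\L} M)=\pi_{\L}(M_0\otimes_{S_0} S)$, and the multiplication map $\mu:M_0\otimes_{S_0} S\to M$ from Corollary~\ref{cor-d1} has $\Ker(\mu),\Coker(\mu)\in\LTors S$, so $\pi_{\L}(\mu)$ is a natural isomorphism $\bar G\bar F(\pi_{\L} M)\cong \pi_{\L} M$. This establishes the first equivalence $\QGr_{\L} S\simeq \QMod S_0$.

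For the second equivalence I would check that $\bar F$ and $\bar G$ preserve the correct finiteness condition. If $M\in\lgrmod S$, then $M_0$ is finitely generated over $S_0$ by definition, so $\bar F$ restricts. Conversely, for $N\in\mod S_0$, each homogeneous component $(N\otimes_{S_0} S)_n = N\otimes_{S_0} S_n$ is a finitely generated right $S_0$-module, because $N$ is finitely generated over $S_0$ and $S_n$ is finitely generated as a right $S_0$-module by Convention~\ref{cvn-den}; hence $N\otimes_{S_0} S\in \lgrmod S$ and $\bar G$ restricts as well. The first equivalence therefore descends to $\qlgr_{\l} S\simeq \qmod S_0$. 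The main technical obstacle I anticipate is confirming that $\bar G$ is well-defined on morphisms in the face of the non-exactness of $-\otimes_{S_0} S$: although new kernels can appear when tensoring, the degree-zero component of every such kernel lands in a torsion $S_0$-module (either $X/X'$, $Y''$, or zero), and so by Lemma~\ref{lem-d2} these kernels all lie in $\LTors S$, which is precisely what Corollary~\ref{cor-d2} packages.
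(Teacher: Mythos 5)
Your argument is correct, and it takes a genuinely different route from the paper. You build an explicit quasi-inverse: since $\pi_{\L}(-\otimes_{S_0}S)$ sends every $S_0$-linear map with torsion kernel and cokernel to an isomorphism (Corollary \ref{cor-d2}), it descends to a functor $\bar G:\QMod S_0\to \QGr_{\L} S$, and the two comparisons $\bar F\bar G\cong\id$ (from $(N\otimes_{S_0}S)_0\cong N$) and $\bar G\bar F\cong\id$ (from $\pi_{\L}(\mu)$, Corollary \ref{cor-d1}) finish the job; the restricted statement follows because $S_n$ is finitely generated as a right $S_0$-module, so $-\otimes_{S_0}S$ carries $\mod S_0$ into $\lgrmod S$. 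The paper instead proves essential surjectivity and then full faithfulness by a chain of $\Hom$-isomorphisms built from the adjoint pairs $(\pi_0,\omega_0)$, $(\pi_{\L},\omega_{\L})$, $(-\otimes_{S_0}S,(-)_0)$, the crucial input being Lemma \ref{lem-d3}, which compares the two section functors on torsion-free modules (and whose proof uses injective envelopes). Your route avoids Lemma \ref{lem-d3} and the section functors altogether, at the price of the descent-and-naturality bookkeeping you flag: well-definedness of $\bar G$ on morphisms and naturality of the two comparison isomorphisms at the level of the quotient categories. Those checks are routine roof manipulations, and they can be eliminated entirely by citing the standard fact that a Serre quotient $\mathcal A/\mathcal T$ is the localization of $\mathcal A$ at the class of morphisms whose kernel and cokernel lie in $\mathcal T$, with precomposition by the quotient functor fully faithful on functor categories; with that one sentence, Corollary \ref{cor-d2} gives $\bar G$ for free and Corollary \ref{cor-d1} gives $\bar G\bar F\cong\id$ for free, so your proof is, if anything, shorter than the paper's, while the paper's $\Hom$-computation has the side benefit of exhibiting the equivalence as compatible with the section functors.
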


\begin{proof}
Recall that $\pi_0:\Mod S_0\longrightarrow \QMod S_0$ is the projection functor, and $\omega_0:\QMod S_0\longrightarrow\Mod S_0$ is its right adjoint functor.
Let $-\otimes_{S_0} S:\Mod S_0\longrightarrow \GrMod S$ be the canonical functor.
Let
\[F=(-\otimes_{S_0}S)\circ \omega_0: \QMod S_0\longrightarrow \GrMod S
 \quad \text{and} \quad
 G=\pi_0 \circ (-)_0: \GrMod S \longrightarrow \QMod S_0.\]
 Note that the functor $G$ is exact. By Lemma \ref{lem-d2}, for an object $M\in\GrMod S$, $G(M)=\pi_0(M_0)=0$ if and only if $M_0\in\Tors S_0$ if and only if $M\in \LTors S$. Therefore, by \cite[Corollary 3.11 in Chapter 4]{P}, $G$ induces an exact functor (also denoted by $(-)_0$ by abuse of notation)
 \[(-)_0:\QGr_{\L} S\longrightarrow\QMod S_0.\]
 It is easy to see that $GF$ is natural isomorphic to the identity functor $\id_{\QMod S_0}$. Hence $G$ is essentially surjective. Therefore $(-)_0:\QGr_{\L} S\longrightarrow\QMod S_0$ is essentially surjective.

Recall that $\pi_{\L}:\GrMod S\longrightarrow \QGr_{\L} S$ is the projection functor, and $\omega_{\L}$ is its right adjoint functor. We next prove that
\[ \Hom_{\QGr_{\L} S}(\pi_{\L}(M),\pi_{\L}(N))\cong \Hom_{\QMod S_0}(\pi_0(M_0),\pi_0(N_0))\]
for $M,N\in \GrMod S$.
Since $\omega_0$ is right adjoint to $\pi_0$, we have a natural isomorphism
\begin{equation}\label{eq-d3}
  \Hom_{\QMod S_0}(\pi_0(M_0),\pi_0(N_0))\cong\Hom_{\Mod S_0}(M_0,\omega_0\pi_0(N_0)).
\end{equation}
%We have the following natural isomorphism
%\begin{eqnarray}
%\Hom_{\GrMod S}(M_0\otimes_{S_0}S,\omega\pi(N_0)\otimes_{S_0}S)&\cong&\Hom_{\Mod S_0}(M_0,(\omega\pi(N_0)\otimes_{S_0}S)_0)\\
%\nonumber&\cong&\Hom_{\Mod S_0}(M_0,\omega\pi(N_0)).
%\end{eqnarray}
By Corollary \ref{cor-d1}, we have $\pi_{\L}(M)\cong \pi_{\L}(M_0\otimes_{S_0}S)$ and $\pi_{\L}(N)\cong \pi_{\L}(N_0\otimes_{S_0}S)$.
Let $\nu_{N_0}:N_0\to \omega_0\pi_0(N_0)$ be the adjunction morphism. Then both $\Ker(\nu_{N_0})$ and $\Coker(\nu_{N_0})$ are in $\Tors S_0$ (see \cite[Proposition 4.3 in Chapter 4]{P}). By Corollary \ref{cor-d2}, we obtain a natural isomorphism $\pi_{\L}(N_0\otimes_{S_0}S)\cong\pi_{\L}(\omega_0\pi_0(N_0)\otimes_{S_0}S)$.  Hence we have the following natural isomorphisms
\begin{eqnarray}
% \nonumber to remove numbering (before each equation)
  \Hom_{\QGr_{\L} S}(\pi_{\L}(M),\pi_{\L}(N))
  &\cong&\Hom_{\QGr_{\L} S}(\pi_{\L}(M_0\otimes_{S_0}S),\pi_{\L}(N_0\otimes_{S_0}S)) \\
  &\cong&\Hom_{\QGr_{\L} S}(\pi_{\L}(M_0\otimes_{S_0}S),\pi_{\L}(\omega_0\pi_0(N_0)\otimes_{S_0}S))\\
  &\cong&  \Hom_{\GrMod S}(M_0\otimes_{S_0}S,\omega_{\L}\pi_{\L}(\omega_0\pi_0(N_0)\otimes_{S_0}S))\\
  \label{eq-d4.1}
  &\cong&\Hom_{\Mod S_0}\left(M_0,\omega_{\L}\pi_{\L}(\omega_0\pi_0(N_0)\otimes_{S_0}S)_0\right)\\
 \label{eq-d4.2}
 &\cong&\Hom_{\Mod S_0}\left(M_0,\omega_0\pi_0(N_0)\right),
\end{eqnarray}
where
the isomorphism (\ref{eq-d4.1}) follows from the fact that $-\otimes_{S_0}S$ is left adjoint to the functor $(-)_0$, and
the isomorphism (\ref{eq-d4.2}) follows from Lemma \ref{lem-d3}. Combining the isomorphisms from (\ref{eq-d3}) to (\ref{eq-d4.2}), we obtain the desired natural isomorphism. In summarizing, we obtain that $(-)_0:\QGr_{\L} S\longrightarrow \QMod S_0$ is an equivalence.

Since $\LTors S \cap \lgrmod S= \lltors S$, it follows $\qlgr_{\l} S$ is a full subcategory of $\QGr_{\L} S$. It is clear that the functor $(-)_0:\QGr_{\L} S\longrightarrow\QMod S_0$ sends the objects of the subcategory $\qlgr_{\l} S$ to the objects of $\qmod S_0$, and for each object $\mathcal{N}$ in $\qmod S_0$, there is an object $\mathcal M$ in $\qlgr_{\l} S$ such that $(\mathcal{M})_0=\mathcal{N}$. Hence $(-)_0:\qlgr_{\l} S \longrightarrow\qmod S_0$ is an equivalence.
\end{proof}

\subsection{Dade's Theorem for densely bigraded algebras}

In this subsection, we consider bigraded algebras. Let $S=\bigoplus_{i,j\in\mathbb Z}S_{(i,j)}$ be a $\mathbb Z\times\mathbb Z$-bigraded algebra.
%For the sake of convenience, for an element $a\in S_{(i,j)}$, we call $i$ the {\it external degree} of $a$ and $j$ the {\it internal degree} %of $a$.
Write $S_i=\bigoplus_{j\in\mathbb Z}{S_{(i,j)}}$ for each $i\in\mathbb Z$. Then $S=\bigoplus_{i\in\mathbb Z}S_i$ can be viewed as a $\mathbb Z$-graded algebra.
We call $S$ a \emph{densely bigraded algebra} if $S$, viewed as a $\mathbb Z$-graded algebra, is a densely graded algebra.

\begin{cvn} \label{cvn-biden}
Throughout the rest of this subsection, we always assume that $S$ is a densely bigraded algebra such that $S_0:=\bigoplus_{j\in\mathbb Z}S_{(0,j)}$ is a noetherian algebra, and $S_i:=\bigoplus_{j\in\mathbb Z}S_{(i,j)}$ is finitely generated both as a graded left $S_0$-module and as a graded right $S_0$-module for every $i\in \mathbb Z$.
\end{cvn}

Denote by $\BiGrMod S$  the category of all bigraded right $S$-modules, and by $\lbigrmod S$ the full subcategory of $\BiGrMod S$ consisting of all bigraded right $S$-modules $M=\bigoplus_{i,j \in \mathbb Z}M_{(i,j)}$ such that, for every $i\in\mathbb Z$, $M_i:=\bigoplus_{j\in\mathbb Z}M_{(i,j)}$ is finitely generated as a graded right $S_0:=\bigoplus_{j\in\mathbb Z}S_{(0,j)}$ module.

A homogeneous element $x$ of a bigraded module $M \in \BiGrMod S$ is called a \emph{locally torsion} element, if the graded right $S_0$-module $xS_0$ is finite dimensional.
A bigraded module $M \in \BiGrMod S$ is called a \emph{locally torsion} bigraded $S$-module, if all homogeneous elements of $M$ are locally torsion elements.
Write $\BiLTors S$ for the full subcategory of $\BiGrMod S$ consisting of all locally torsion bigraded $S$-modules, and $\lbiltors S$ for the full subcategory of $\lbigrmod S$ consisting of locally torsion objects.
Now $\BiLTors S$ is a Serre subcategory of $\BiGrMod S$ and $\lbiltors S$ is a Serre subcategory of $\lbigrmod S$. Hence we have the quotient categories
\[ \QBiGr_{\L} S:=\BiGrMod S/\BiLTors S \quad \text{and} \quad
\qlbigr_{\l} S :=\lbigrmod S/\lbiltors S.\]

We remark that all the results in the previous subsection have corresponding bigraded versions. Especially, we have the following version of Dade's Theorem for densely bigraded algebras. Since the proof of this result is similar to that of Theorem \ref{thm-d1}, we omit the proof.

\begin{thm}\label{thm-d2}
Let $S$ be as Convention \ref{cvn-biden}.
Then the natural functor $(-)_0: \BiGrMod S \longrightarrow \GrMod S_0; M\mapsto M_0$ induces the following equivalences of abelian categories:
$$(-)_0: \QBiGr_{\L} S\overset{\cong}{\longrightarrow} \QGr S_0
\quad \text{and} \quad
(-)_0: \qlbigr_{\l} S \overset{\cong}{\longrightarrow} \qgr S_0.$$
\end{thm}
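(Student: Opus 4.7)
The plan is to mimic the proof of Theorem \ref{thm-d1} step by step, replacing the ungraded $S_0$-module category with the graded one. The key observation is that all the ingredients used in the $\ZZ$-graded proof —  the local torsion characterization via the degree-zero component, the adjunction $(-\otimes_{S_0}S) \dashv (-)_0$, and the various $\LTors$-isomorphisms — have direct counterparts when we pass from $\Mod S_0$ to $\GrMod S_0$ and from $\GrMod S$ to $\BiGrMod S$.

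More concretely, I would first set up the analogous functors: the projection $\pi_0:\GrMod S_0 \to \QGr S_0$ with right adjoint $\omega_0$, the projection $\pi_{\L}:\BiGrMod S \to \QBiGr_{\L} S$ with right adjoint $\omega_{\L}$, and the canonical functor $-\otimes_{S_0} S:\GrMod S_0 \to \BiGrMod S$ (which is left adjoint to $(-)_0$). Then I would establish the bigraded analogues of the preparatory lemmas: (a) the bigraded version of Lemma \ref{lem-d2}, showing that for $M \in \BiGrMod S$, being locally torsion is equivalent to $M_0$ being a torsion object in $\GrMod S_0$ (here ``torsion'' in $\GrMod S_0$ means every homogeneous element generates a finite-dimensional $S_0$-submodule); (b) the bigraded Corollary \ref{cor-d1}, giving that the canonical map $\mu:M_0\otimes_{S_0}S \to M$ has kernel and cokernel in $\BiLTors S$; (c) the bigraded Corollary \ref{cor-d2}; and (d) the bigraded Lemma \ref{lem-d3}, giving $\omega_0\pi_0(N)\cong\omega_{\L}\pi_{\L}(\omega_0\pi_0(N)\otimes_{S_0}S)_0$ for torsion-free $N\in\GrMod S_0$. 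Each of these follows by repeating the argument verbatim once one notices that the density hypothesis and the finite generation of each $S_i$ as a graded $S_0$-module are exactly what is needed.

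Granting these bigraded lemmas, essential surjectivity of $(-)_0:\QBiGr_{\L} S \to \QGr S_0$ follows from the natural isomorphism $(N\otimes_{S_0} S)_0\cong N$, and fully faithfulness follows from the same chain of natural isomorphisms used in Theorem \ref{thm-d1}:
\begin{align*}
\Hom_{\QBiGr_{\L} S}(\pi_{\L}M,\pi_{\L}N)
&\cong \Hom_{\QBiGr_{\L} S}(\pi_{\L}(M_0\otimes_{S_0}S),\pi_{\L}(\omega_0\pi_0(N_0)\otimes_{S_0}S))\\
&\cong \Hom_{\BiGrMod S}(M_0\otimes_{S_0}S,\omega_{\L}\pi_{\L}(\omega_0\pi_0(N_0)\otimes_{S_0}S))\\
&\cong \Hom_{\GrMod S_0}(M_0,\omega_{\L}\pi_{\L}(\omega_0\pi_0(N_0)\otimes_{S_0}S)_0)\\
&\cong \Hom_{\GrMod S_0}(M_0,\omega_0\pi_0(N_0))\\
&\cong \Hom_{\QGr S_0}(\pi_0M_0,\pi_0N_0).
\end{align*}
The second equivalence, between $\qlbigr_{\l} S$ and $\qgr S_0$, then follows by restriction, once one checks (via the finite generation conditions in Convention \ref{cvn-biden}) that $(-)_0$ sends $\lbigrmod S$ into $\grmod S_0$ and that every object of $\qgr S_0$ lifts to an object of $\qlbigr_{\l} S$ through $-\otimes_{S_0}S$.

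The only nontrivial point — and the step I would be most careful about — is the bigraded analogue of Lemma \ref{lem-d1}(ii) underpinning Lemma \ref{lem-d2}: one must verify that if $M_0$ is a torsion object of $\GrMod S_0$ (meaning locally finite dimensional over $k$), then every homogeneous $x\in M_n$ has $xS_0$ locally finite dimensional. This is where the precise meaning of ``torsion in $\GrMod S_0$'' has to be reconciled with ``locally torsion in $\BiGrMod S$''; concretely, one pulls back the argument of Lemma \ref{lem-d1}(ii) by tensoring the exact sequence $0\to S_{-n}S_n\to S_0\to S_0/S_{-n}S_n\to 0$ on the left with $M_n$, uses the graded version of finite generation of $S_n$ over $S_0$ to conclude that $M_n(S_{-n}S_n)$ has finite-codimensional image, and then applies $M_nS_{-n}\subseteq M_0$ together with the hypothesis that $M_0$ is a torsion graded $S_0$-module to deduce that $x S_0$ is finite dimensional. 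Once this is in place, the proof is formally identical to that of Theorem \ref{thm-d1} and can legitimately be omitted as the paper does.
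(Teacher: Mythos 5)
Your proposal is correct and is exactly the argument the paper intends: the paper itself omits the proof of Theorem \ref{thm-d2}, saying only that it is ``similar to that of Theorem \ref{thm-d1}'', and your mutatis mutandis transfer of Lemmas \ref{lem-d1}--\ref{lem-d3} and Corollaries \ref{cor-d1}, \ref{cor-d2} to the bigraded setting is precisely that similarity made explicit. One small wording caution: in the bigraded setting a locally torsion element $x$ must have $xS_0$ \emph{finite dimensional}, not merely locally finite, and your key step should be phrased as applying Lemma \ref{lem-d1}(ii) to the $\mathbb Z$-graded (first-grading) module $xS$ rather than to $M_n$ itself, which then gives $(xS)_n=xS_0$ finite dimensional as soon as $(xS)_0=xS_{-n}$ is (the latter being finitely generated and torsion inside $M_0$).
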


\subsection{Smash products}

In this subsection, $A=\bigoplus_{i\in\mathbb N}A_i$ and $B=\bigoplus_{i\in\mathbb N}B_i$ are connected graded algebras such that $\dim A_i<\infty$ and $\dim B_i<\infty$ for all $i\in\mathbb N$. Assume that $A$ and $B$ are generated in degree 1.

Let $\psi:B\otimes A\to A\otimes B$ be a twisting map. Consider the smash product $A\#_\psi B$ (see Remark \ref{rem-tsp}(3)).
Let $S=A\#_\psi B$. We may endow the following bigrading on $S$:
\[S_{(i,j)}=A_{i+j}\otimes B_j \quad \text{for}\  i,j\in\mathbb Z.\]
In particular $S_0:=\bigoplus_{j\in\mathbb N}S_{(0,j)}$ is isomorphic to the twisted Segre product $A\circ_\psi B$.

\begin{prop}\label{prop-d1}
Let $S=A\#_\psi B$ be as above.
\begin{enumerate}
  \item $S$ is a densely bigraded algebra.
  \item Let $S_i:=\bigoplus_{j\in\mathbb N} S_{(i,j)}$ for all $i\in\mathbb Z$. Then $S_i$ is finitely generated both as a graded left $S_0$-module and as a graded right $S_0$-module.
\end{enumerate}
\end{prop}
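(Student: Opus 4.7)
The plan is to exploit that the twisting map $\psi$ is bijective and preserves the $\mathbb{Z}\times\mathbb{Z}$-bigrading, which lets every multiplication in $A\#_\psi B$ be rewritten, up to a $k$-linear isomorphism, as a separate product of $A$-factors and of $B$-factors. Concretely, on bigraded pieces the multiplication map factors as
\[
(A_p\otimes B_q)\otimes(A_r\otimes B_s)\xrightarrow{\ \id\otimes\psi\otimes\id\ } A_p\otimes A_r\otimes B_q\otimes B_s\xrightarrow{\ m_A\otimes m_B\ } A_{p+r}\otimes B_{q+s},
\]
where the first arrow is an isomorphism and the second is surjective whenever $p,q,r,s\geq 0$, since $A$ and $B$ are connected and generated in degree $1$. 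Almost everything in the proof will flow from this observation.

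For \textrm{(i)}, I would fix $n,i\in\mathbb{Z}$ and show that all but finitely many bigraded components $S_{(n,\ell)}=A_{n+\ell}\otimes B_\ell$ of $S_n$ lie in $S_iS_{n-i}$. Writing $\ell=j+k$ with $j\geq\max(0,-i)$ and $k\geq\max(0,i-n)$ (possible exactly when $\ell\geq L:=\max(0,-i)+\max(0,i-n)$), the multiplication $(A_{i+j}\otimes B_j)\otimes(A_{n-i+k}\otimes B_k)\to A_{n+\ell}\otimes B_\ell$ is surjective by the displayed factorization, the non-negativity of $i+j$ and $n-i+k$ being guaranteed by the choice of $j,k$. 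Hence $S_{(n,\ell)}\subseteq S_iS_{n-i}$ for every $\ell\geq L$, so $S_n/S_iS_{n-i}$ is a quotient of the finite direct sum $\bigoplus_{\max(0,-n)\leq\ell<L}A_{n+\ell}\otimes B_\ell$, which is finite-dimensional by the local finiteness of $A$ and $B$.

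For \textrm{(ii)}, I would take as candidate generators the lowest-second-grading piece of $S_i$: for $i\geq 0$ this is $S_{(i,0)}=A_i\otimes B_0\cong A_i$, and for $i=-m<0$ it is $S_{(-m,m)}=A_0\otimes B_m\cong B_m$, both finite-dimensional. In the first case the explicit formulas
\[
(a\otimes 1)(c\otimes d)=ac\otimes d,\qquad (c\otimes d)(a\otimes 1)=ca_\psi\otimes d^\psi
\]
(using $\psi(1\otimes c)=c\otimes 1$ for the first) show that right multiplication by $S_0=A\circ_\psi B$ on $A_i\otimes 1$ surjects onto every bigraded piece $A_{i+j}\otimes B_j$ via $A_iA_j=A_{i+j}$, while left multiplication does so by the same convert-and-multiply argument as in \textrm{(i)}. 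The case $i<0$ is symmetric under swapping the roles of $A$ and $B$, now using $\psi(d\otimes 1)=1\otimes d$ on the relevant side.

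The main obstacle is mostly bookkeeping: one has to verify, in each sign configuration of $n$ and $i$, that the shifted indices $i+j$ and $n-i+k$ appearing in the arguments are non-negative, so that degree-$1$-generation of $A$ and $B$ makes $m_A$ and $m_B$ surjective on the relevant graded pieces. Once this is tracked, the bijectivity of $\psi$ carries the rest of the proof, and no substantive difficulty remains.
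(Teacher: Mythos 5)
Your proof is correct and takes essentially the same route as the paper: reduce everything to the bigraded components $S_{(i,j)}=A_{i+j}\otimes B_j$, show that all but finitely many components of $S_n$ lie in $S_iS_{n-i}$, and for (ii) exhibit exactly the generators $A_i\otimes B_0$ (for $i\ge 0$) and $A_0\otimes B_{-i}$ (for $i<0$), using the bijectivity of $\psi$ on one side and the unital conditions on the other. The only cosmetic difference is that in part (i) you invoke the bijectivity of $\psi$ (via the isomorphism $\id\otimes\psi\otimes\id$) to make products of arbitrary nonzero bigraded components full, whereas the paper gets by with the conditions (\ref{tm2})--(\ref{tm3}) applied to elements of the form $a\otimes 1$ and $1\otimes b$; both arguments are valid.
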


\begin{proof}
(i) It is easy to see that $S$ is a bigraded algebra. For $i,s\in\mathbb Z$, $S_i=\bigoplus_{j\in\mathbb Z}A_{i+j}\otimes B_j$ and $S_s=\bigoplus_{t\in\mathbb Z}A_{s+t}\otimes B_t$. If $i,s\ge0$, then $A_i\otimes B_0\subseteq S_i$. Hence $$S_iS_s\supseteq \bigoplus_{t\in\mathbb Z}(A_i\otimes B_0)(A_{s+t}\otimes B_t)=\bigoplus_{t\in\mathbb Z}(A_{i+s+t}\otimes B_t)=S_{i+s},$$ where the first  equality follows from the condition that $A$ is generated in degree 1 and the twisting map $\psi$ satisfies the normal conditions (\ref{tm2}) and (\ref{tm3}). Hence $S_iS_s=S_{i+s}$ if $i,s\ge0$. Similarly, we see $S_iS_s=S_{i+s}$ if $i,s\leq 0$.

Now assume $i>0$ and $s<0$. Then $S_i=\bigoplus_{j\ge0}A_{i+j}\otimes B_j$ and $S_s=\bigoplus_{t\ge -s}A_{s+t}\otimes B_t$. Hence $$S_iS_s\supseteq \bigoplus_{t\ge-s}(A_i\otimes B_0)(A_{s+t}\otimes B_t)=\bigoplus_{t\ge-s}A_{i+s+t}\otimes B_t.$$ Therefore $S_{i+s}/S_iS_s$ must be finite dimensional since $\dim A_j<\infty$ and $\dim B_j<\infty$ for all $j$.

Similarly, $S_{i+s}/S_iS_s$ is finite dimensional if $i<0$ and $s>0$.

(ii) Assume $i\ge0$. Note that the twisting map $\psi$ is bijective, so
$\psi (B_j\otimes A_i)=A_i\otimes B_j$ for all $i,j$. We have, as a graded left $S_0$-module, \[S_i=\bigoplus_{j\ge0}A_{i+j}\otimes B_j=(\bigoplus_{j\ge0}A_{j}\otimes B_j)(A_i\otimes B_0)=S_0(A_i\otimes B_0).\]
Therefore, $S_i$ is finitely generated as a graded left $S_0$-module. Similarly, we may prove $S_i$ is finitely generated as a graded right $S_0$-module.

When $i<0$,
\[S_i=\bigoplus_{j\ge-i}A_{i+j}\otimes B_j=(\bigoplus_{j\ge0}A_{j}\otimes B_j)(A_0\otimes B_{-i})=S_0(A_0\otimes B_{-i}).\]
Hence $S_i$ is finitely generated as a graded left $S_0$-module. Similarly, we see that $S_i$ is finitely generated as a graded right $S_0$-module.
\end{proof}

\begin{rem}
Note that in \cite{VR}, a $\mathbb Z$-grading and an $\mathbb N \times \mathbb N$-bigrading were introduced for the tensor product $A\otimes B$.
We remark that the bigrading in this paper is different from that of \cite{VR}, and hence the corresponding categories of bigraded modules are also different.
\end{rem}

Combining Theorem \ref{thm-d2} and Proposition \ref{prop-d1}, we obtain the following result.

\begin{thm}\label{thm-d3} Assume that $A\circ_\psi B$ is a noetherian algebra. Then we have the following equivalence of abelian categories
\[\QBiGr_{\L} A\#_\psi B\cong \QGr A\circ_\psi B
\quad \text{and} \quad
\qlbigr_{\l} A\#_\psi B\cong \qgr A\circ_\psi B.\]
\end{thm}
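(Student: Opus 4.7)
The plan is a direct application of the densely bigraded Dade's Theorem (Theorem \ref{thm-d2}) to $S := A \#_\psi B$, so the work consists of checking that the hypotheses of Convention \ref{cvn-biden} are in place.

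First I would fix the bigrading on $S = A\#_\psi B$ introduced just before Proposition \ref{prop-d1}, namely $S_{(i,j)} = A_{i+j}\otimes B_j$ for $i,j\in\mathbb Z$ (where $A_n = 0$ for $n<0$), and compute
\[ S_0 \;=\; \bigoplus_{j\in\mathbb N} S_{(0,j)} \;=\; \bigoplus_{j\in\mathbb N} A_j \otimes B_j, \]
observing that this graded vector space, with the multiplication inherited from $A\#_\psi B$, coincides exactly with the twisted Segre product $A\circ_\psi B$ (cf.\ Remark \ref{rem-tsp}(3), which already identifies $A\circ_\psi B$ with the subalgebra of $A\#_\psi B$ sitting in bidegrees $(0,j)$). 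Hence the standing hypothesis that $A\circ_\psi B$ be noetherian translates directly into the noetherianity of $S_0$ required in Convention \ref{cvn-biden}.

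Second, I would invoke Proposition \ref{prop-d1}: part (i) gives that $S$ is densely bigraded, and part (ii) gives that each slice $S_i = \bigoplus_{j\in\mathbb N}S_{(i,j)}$ is finitely generated both as a graded left $S_0$-module and as a graded right $S_0$-module. Together with the noetherianity of $S_0$ from the previous step, this verifies every assumption in Convention \ref{cvn-biden} for the densely bigraded algebra $S$.

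With the hypotheses in force, Theorem \ref{thm-d2} immediately yields the two equivalences of abelian categories
\[ (-)_0 : \QBiGr_{\L}(A\#_\psi B) \overset{\cong}{\longrightarrow} \QGr(A\circ_\psi B), \qquad (-)_0 : \qlbigr_{\l}(A\#_\psi B) \overset{\cong}{\longrightarrow} \qgr(A\circ_\psi B), \]
which is precisely the assertion of Theorem \ref{thm-d3}. There is no genuine obstacle here, since the heavy lifting was done in proving Theorem \ref{thm-d2} and Proposition \ref{prop-d1}; the only subtlety worth spelling out is the canonical identification of $S_0$ with $A\circ_\psi B$ as graded algebras (rather than merely as graded vector spaces), so that the quotient categories on the right-hand sides are indeed the stated ones.
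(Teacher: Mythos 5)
Your proposal is correct and follows exactly the paper's own route: identify $S_0$ of the bigraded smash product $A\#_\psi B$ with $A\circ_\psi B$ (so the noetherian hypothesis gives noetherianity of $S_0$), use Proposition \ref{prop-d1} to verify Convention \ref{cvn-biden}, and then apply Theorem \ref{thm-d2}. Nothing further is needed.
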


\begin{rem}
In general it is not known whether the twisted Segre product $A\circ_\psi B$ is noetherian
when $A$ and $B$ are noetherian.
Note that if $S=A\#_\psi B$ is a noetherian algebra, then we see that $S_0=A\circ_\psi B$ is also a noetherian algebra, so we can use Theorem \ref{thm-d3}.
\end{rem}

\begin{rem}
By \eqref{tm2}, the map $A \to A \#_\psi B; a \mapsto a \otimes 1$ is an algebra homomorphism. This homomorphism induces an exact functor \[\GrMod A \longrightarrow \BiGrMod A \#_\psi B;\; M \mapsto M\otimes_A(A \#_\psi B),\] where
$(M\otimes_A(A \#_\psi B))_{(i,j)}=\sum_{p+q=i+j}M_p \otimes_A A_q \otimes B_{j}$. If $M \in \Tors A$, then $M\otimes_A(A \#_\psi B) \in \BiLTors A \#_\psi B$.
(Indeed, for any $m \in M$, there exist $s \in \NN$ such that $ma=0$ for all $a \in \bigoplus_{i \geq s}A_i$. Then for any $m \otimes a \otimes b \in M\otimes_A(A \#_\psi B)$ and any $c \otimes d \in \bigoplus_{i \geq s} A_i \otimes B_i$, we have
$(m \otimes a \otimes b)(c\otimes d)= m \otimes ac_\psi\otimes b^\psi d=
m \otimes ((ac_\psi \otimes 1)(1\otimes b^\psi d))=mac_\psi\otimes 1\otimes b^\psi d=0$, because $\deg ac_\psi \geq s$. Thus $m \otimes a \otimes b\in M\otimes_A(A \#_\psi B)$ is a locally torsion element.) Hence we obtain an induced functor $\QGr A \longrightarrow \QBiGr_{\L} A \#_\psi B$. If $A\circ_\psi B$ is a noetherian algebra, then by Theorem \ref{thm-d3}, we have a functor $\QGr A \longrightarrow \QGr A\circ_\psi B$, which sends $\pi A$ to $\pi (A\circ_\psi B)$. This means that there exists a map $\Proj A\circ_\psi B \to \Proj A$ in the sense of \cite[Section 2]{AZ} (see also \cite{Sm}). Similarly, we also have a map $\Proj A\circ_\psi B \to \Proj B$.
This consequence is analogous to \cite[Corollary 2.4]{VR}.
 \end{rem}

\subsection{Twisted Segre products of Koszul AS-regular algebras}

Now let $A$ and $B$ be noetherian Koszul AS-regular algebras, and let $\psi:B\otimes A\to A\otimes B$ be a twisting map. %The structure of the Yoneda algebra of
Then the smash product $A\#_\psi B$ has been studied in \cite{WS, SZL}.
Using the results for $A\#_\psi B$, we can establish the following theorem for the twisted Segre product $A\circ_\psi B$.

\begin{thm}\label{thm-d4}
Let $A$ and $B$ be noetherian Koszul AS-regular algebras, and let $\psi:B\otimes A\to A\otimes B$ be a twisting map.
Assume that  the twisted Segre product $A\circ_\psi B$ is noetherian.
Then $A\circ_\psi B$ is a noncommutative graded isolated singularity.
\end{thm}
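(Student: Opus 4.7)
\smallskip

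\noindent\textbf{Proof proposal.} The plan is to reduce the question to a statement about the bigraded smash product $A\#_\psi B$ via Theorem \ref{thm-d3}, and then exploit the AS-regularity of the smash product proved in the literature.

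First, since $A\circ_\psi B$ is noetherian by hypothesis, Theorem \ref{thm-d3} gives an equivalence of abelian categories $\qgr A\circ_\psi B \cong \qlbigr_{\l} A\#_\psi B$. Hence it suffices to prove that the right-hand side has finite global dimension.

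Second, I would invoke the results of \cite{WS, SZL} on twisted tensor products: the smash product $S := A\#_\psi B$ of two noetherian Koszul AS-regular algebras along a twisting map is again noetherian AS-regular, with (bi)graded global dimension equal to $\dim A + \dim B$. In particular, $\bigrmod S$, and hence its full subcategory $\lbigrmod S$, has finite global dimension.

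Third, I would transfer this bound to the Serre quotient $\qlbigr_{\l} S = \lbigrmod S / \lbiltors S$. Since the projection functor $\pi_{\l}$ is exact and the embedding of $\lbiltors S$ is that of a Serre subcategory, I expect
$$\gldim \qlbigr_{\l} S \;\leq\; \gldim \lbigrmod S \;<\; \infty,$$
which, combined with Theorem \ref{thm-d3}, yields $\gldim \qgr A\circ_\psi B < \infty$ and hence the desired isolated-singularity conclusion.

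The main obstacle is the third step: the passage from $\lbigrmod S$ to $\qlbigr_{\l} S$. This is the bigraded analogue of the well-known fact that $\gldim \qgr R \leq \gldim R$ for a noetherian graded algebra $R$, but the specific locally-torsion setting of $\lbiltors$ has to be handled with care. The cleanest route is to compute $\Ext$ groups in the quotient as a direct limit
$$\Ext^i_{\qlbigr_{\l} S}(\pi_{\l} M, \pi_{\l} N) \;\cong\; \varinjlim \, \Ext^i_{\lbigrmod S}(M', N),$$
taken over bigraded submodules $M' \subseteq M$ with $M/M' \in \lbiltors S$, and observe that vanishing above degree $\gldim S$ on the right forces the same vanishing on the left. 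Once this is in hand, steps one and two finish the argument.
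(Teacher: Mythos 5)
Your proposal follows essentially the same route as the paper: reduce via Theorem \ref{thm-d3} to the quotient category of the bigraded smash product, invoke \cite{WS,SZL} (valid because $\psi$ bijective makes $\sigma_\psi$ invertible) to get that $A\#_\psi B$ is Koszul of finite global dimension $d=\gldim A+\gldim B$, and then descend finiteness of global dimension to the Serre quotient. The one place where you deviate is also the one place where your argument is incomplete, namely your step three. The colimit formula $\Ext^i_{\qlbigr_{\l} S}(\pi M,\pi N)\cong\varinjlim \Ext^i(M',N)$ is not automatic for Serre quotients: even for $i=0$, Gabriel's formula is $\varinjlim\Hom(M',N/N')$ with $N'$ running over torsion subobjects of $N$ (so your version needs $N$ torsion-free, or $N$ replaced by $N$ modulo its locally torsion part), and for $i>0$ the standard proofs of such formulas (compare \cite[Proposition 7.2]{AZ}) are $\delta$-functor/effaceability arguments whose essential input is that the projection functor carries injectives to objects that are acyclic --- in fact injective --- in the quotient; also note that $\Ext$ here should be computed in $\BiGrMod A\#_\psi B$, since $\lbigrmod A\#_\psi B$ need not have enough injectives. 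The paper short-circuits all of this: the projection $\BiGrMod A\#_\psi B\to \QBiGr_{\L}A\#_\psi B$ is exact and preserves injective objects by \cite[Corollary 5.4]{P}, so a finite injective resolution upstairs maps to one in the quotient, giving finite global dimension of $\QBiGr_{\L}A\#_\psi B$ and $\qlbigr_{\l}A\#_\psi B$ immediately. So either quote that fact (or prove stability of the locally torsion theory, which is what the colimit route ultimately requires) or replace your step three by this injective-resolution argument. One further small point: \cite{WS,SZL} give the global dimension of $A\#_\psi B$ for its total $\mathbb N$-grading; to use it for bigraded modules you need the comparison ``bigraded global dimension $\le$ ungraded global dimension,'' which the paper takes from \cite[I.2.7]{NVO} --- you assert this in passing, and it is standard, but it is a genuine step in the argument.
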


\begin{proof}
Since $\psi$ is bijective, $\sigma_{\psi}$ is invertible, so the conditions in \cite[Theorem 2]{WS} (see also \cite[Theorem 2.11]{SZL}) are satisfied.
Hence $A\#_\psi B$, equipped with the $\mathbb N$-graded structure, is a Koszul algebra of finite global dimension $d=\gldim A+\gldim B$.
Thus we see that $A\#_\psi B$ has ungraded global dimension $d$.
Since the bigraded global dimension of $A\#_\psi B$ is not larger than $d$ by \cite[I.2.7]{NVO}, and the projection functor $\BiGrMod A\#_\psi B \longrightarrow \QBiGr_{\L} A\#_\psi B$ is exact and preserves injective objects by \cite[Corollary 5.4]{P},
it follows that the quotient categories $\QBiGr_{\L} A\#_\psi B$ and $\qlbigr_{\l} A\#_\psi B$ have finite global dimension.
In particular, we see that $\qgr A\circ_\psi B$ has finite global dimension by Theorem \ref{thm-d3}.
\end{proof}

% % % % % % % % % % % % % % % % % % % % %
\section{Twisted Segre products of quadratic algebras}
% % % % % % % % % % % % % % % % % % % % %
In this section, we study twisted Segre products of quadratic algebras.
Let $V$ and $U$ be finite dimensional vector spaces, and let $A=T(V)/(R_A)$ and $B=T(U)/(R_B)$ be quadratic algebras. Let $\psi:B\otimes A\to A\otimes B$ be a twisting map. We identify $A_1$ with $V$ and $B_1$ with $U$. Denote by $\psi_0: U\otimes V\to V\otimes U$  the restriction of $\psi$ to $B_1\otimes A_1$. By Remark \ref{rem-tsp}(2), $\psi_0$ determines uniquely a twisting map $\psi_T:T(U)\otimes T(V)\to T(V)\otimes T(U)$. By using Conditions (\ref{tm4}) and (\ref{tm5}), we can check that $\psi_T(R_B\otimes V)\subseteq V\otimes R_B$ and $\psi_T(U\otimes R_A)\subseteq R_A\otimes U$, which make the following diagram commutative
\begin{equation}\label{diag-qd}
  \xymatrix{
  T(U)\otimes T(V) \ar[d]_{\pi_B\otimes \pi_A} \ar[r]^{\psi_T} & T(V)\otimes T(U) \ar[d]^{\pi_A\otimes \pi_B} \\
  B\otimes A \ar[r]^{\psi} & A\otimes B,   }
\end{equation}
where the vertical maps are the natural projections. By Lemma \ref{lem-tspmorp}, we obtain a homomorphism of graded algebras
\begin{equation}\label{eq-mp1}
  h_{\pi_A\pi_B}:T(V)\circ_{\psi_T} T(U)\longrightarrow A\circ_\psi B.
\end{equation}
Combining the above homomorphism with the isomorphism $\Xi$ obtained in Remark \ref{rem-tsp}(2), we have
\begin{equation}\label{eq-mp2}
  \Theta:T(V\otimes U)\overset{\Xi}\longrightarrow T(V)\circ_{\psi_T} T(U)\overset{h_{\pi_A\pi_B}}\longrightarrow A\circ_\psi B.
\end{equation}
Since $h_{\pi_A\pi_B}$ is surjective, the composition homomorphism $\Theta$ is epic. We next compute the kernel of $\Theta$. Note that $$\Ker h_{\psi_A\pi_B}=(R_A)\circ_{\psi_T}  T(U)+T(V)\circ_{\psi_T} (R_B).$$
The ideal $\Ker h_{\psi_A\psi_B}$ is generated by $R_A\otimes (U\otimes U)+(V\otimes V)\otimes R_B$. Since $\Xi$ is an isomorphism, $\Ker \Theta$ is generated by
\begin{eqnarray*}
% \nonumber to remove numbering (before each equation)
  &\Xi^{-1}(R_A\otimes (U\otimes U)+(V\otimes V)\otimes R_B)\quad\qquad\qquad\\
  &=(1\otimes \psi_0^{-1}\otimes 1)(R_A\otimes (U\otimes U)+(V\otimes V)\otimes R_B).
\end{eqnarray*}
Summarizing the above narratives, we obtain the following result.

\begin{prop} Let $V$ and $U$ be finite dimensional vector spaces, and let $A=T(V)/(R_A)$ and $B=T(U)/(R_B)$ be quadratic algebras. Assume that  $\psi:B\otimes A \to A\otimes B$ is a twisting map. Let $\psi_0: U\otimes V\to V\otimes U$ be the restriction of $\psi$ to $U\otimes V$. Then $A\circ_\psi B\cong T(V\otimes U)/I$, where $I$ is the two-sided ideal of $T(V\otimes U)$ generated by $$(1\otimes \psi_0^{-1}\otimes 1)(R_A\otimes (U\otimes U)+(V\otimes V)\otimes R_B) \quad \subseteq (V\otimes U)\otimes (V\otimes U).$$
\end{prop}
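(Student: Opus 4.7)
The plan is to follow the sequence of constructions already indicated in the discussion preceding the statement, making each step precise.

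First, I would invoke Remark~\ref{rem-tsp}(2) to extend the bijective linear map $\psi_0 : U \otimes V \to V \otimes U$ to a twisting map $\psi_T : T(U) \otimes T(V) \to T(V) \otimes T(U)$, together with the graded algebra isomorphism $\Xi : T(V\otimes U) \xrightarrow{\cong} T(V)\circ_{\psi_T} T(U)$ of~(\ref{eq-tenalg1}). The next step is to verify that $\psi_T(R_B \otimes V) \subseteq V \otimes R_B$ and $\psi_T(U \otimes R_A) \subseteq R_A \otimes U$, so that the diagram~(\ref{diag-qd}) commutes with the natural projections $\pi_A$ and $\pi_B$ on the vertical arrows. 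Intuitively, this is forced by the fact that the given $\psi$ is well defined on the quotient $B\otimes A = (T(U)/(R_B))\otimes(T(V)/(R_A))$: conditions (\ref{tm4}) and (\ref{tm5}) say $\psi_T$ is compatible with multiplication on both sides, so it must carry any element killed in one quotient into the kernel of the projection on the other side.

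Once (\ref{diag-qd}) is established, Lemma~\ref{lem-tspmorp} applied to $\pi_A$ and $\pi_B$ produces a surjective graded algebra homomorphism $h_{\pi_A\pi_B} : T(V)\circ_{\psi_T} T(U) \to A\circ_\psi B$, and precomposing with $\Xi$ gives the surjection $\Theta : T(V\otimes U) \to A\circ_\psi B$ of~(\ref{eq-mp2}). The problem then reduces to computing $\Ker \Theta = \Xi^{-1}(\Ker h_{\pi_A\pi_B})$. To identify $\Ker h_{\pi_A\pi_B}$, I would use Remark~\ref{rem-tsp}(3) to view $T(V)\circ_{\psi_T} T(U)$ inside the smash product $T(V)\#_{\psi_T} T(U)$; since $h_{\pi_A\pi_B}$ acts as $\pi_A$ on the first tensorand and as $\pi_B$ on the second and preserves each bigraded slice $T(V)_n \otimes T(U)_n$, a standard presentation argument shows that $\Ker h_{\pi_A\pi_B}$ is the two-sided ideal generated by the degree-$2$ piece, namely $R_A \otimes (U\otimes U) + (V\otimes V)\otimes R_B$ inside $(V\otimes V)\otimes (U\otimes U)$.

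Finally, transport through $\Xi^{-1}$ is explicit on degree $2$: the component $\Xi_2 : (V\otimes U)\otimes(V\otimes U) \to (V\otimes V)\otimes(U\otimes U)$ is the middle-swap $1\otimes \psi_0\otimes 1$, by construction of $\Xi$ via the algebra map sending $v\otimes u$ to $v\otimes u$. Hence $\Xi^{-1}$ sends the generators of $\Ker h_{\pi_A\pi_B}$ to $(1\otimes \psi_0^{-1}\otimes 1)(R_A\otimes (U\otimes U) + (V\otimes V)\otimes R_B)$, yielding the stated presentation. The delicate step is really the verification that $\psi_T$ preserves $R_A$ and $R_B$ in the required sense; everything else is bookkeeping via the functoriality of the twisted Segre product (Lemma~\ref{lem-tspmorp}) and transport of structure along~$\Xi$.
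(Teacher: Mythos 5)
Your proposal is correct and follows essentially the same route as the paper: extend $\psi_0$ to $\psi_T$, check via (\ref{tm4}) and (\ref{tm5}) that $\psi_T$ preserves $U\otimes R_A$ and $R_B\otimes V$ so that (\ref{diag-qd}) commutes, apply Lemma \ref{lem-tspmorp} to get the surjection $h_{\pi_A\pi_B}$, compose with $\Xi$, and identify the kernel's degree-two generators via the middle swap $1\otimes\psi_0^{-1}\otimes 1$. The paper likewise asserts (without further detail) that $\Ker h_{\pi_A\pi_B}=(R_A)\circ_{\psi_T}T(U)+T(V)\circ_{\psi_T}(R_B)$ is generated by its degree-two part, so your level of justification matches the paper's.
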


We next give a condition to define a twisting map for quadratic algebras.

\begin{prop}\label{prop-quad} Let $A=T(V)/(R_A)$ and $B=T(U)/(R_B)$ be quadratic algebras, and let $\psi_0: U\otimes V\to V\otimes U$ be a bijective linear map.
Let $\psi_T:T(U)\otimes T(V)\to T(V)\otimes T(U)$ be the twisting map induced by $\psi_0$ (cf. Remark \ref{rem-tsp}(2)).
If $\psi_T$ satisfies
\[ \psi_T(R_B\otimes V)\subseteq V\otimes R_B  \quad  \text{and} \quad \psi_T(U\otimes R_A)\subseteq R_A\otimes U,\]
then it induces a twisting map $\psi:B\otimes A\to A\otimes B$ which fits into the commutative diagram (\ref{diag-qd}).
\end{prop}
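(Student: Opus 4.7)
The plan is to show that $\psi_T$ respects the kernels of the vertical projections in diagram (\ref{diag-qd}), so that it descends to a well-defined bijective linear map $\psi:B\otimes A\to A\otimes B$ inheriting conditions (\ref{tm1})--(\ref{tm5}) from $\psi_T$. Denote by $(R_A)\subseteq T(V)$ and $(R_B)\subseteq T(U)$ the two-sided ideals generated by the quadratic relations. Then
\[K:=\Ker(\pi_B\otimes\pi_A)=(R_B)\otimes T(V)+T(U)\otimes(R_A)\]
and
\[K':=\Ker(\pi_A\otimes\pi_B)=(R_A)\otimes T(U)+T(V)\otimes(R_B),\]
and the crux is to prove $\psi_T(K)\subseteq K'$.

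The key technical step is to propagate the degree-one hypotheses to the full ideals. By induction on $n$, I would first show $\psi_T(R_B\otimes V^{\otimes n})\subseteq V^{\otimes n}\otimes R_B$: using condition (\ref{tm5}) for $\psi_T$, the passage of an element of $T(U)$ past $v_1\cdots v_{n+1}$ factors as first the passage past $v_1\cdots v_n$ (which, applied to $r\in R_B$, lands in $V^{\otimes n}\otimes R_B\otimes V$ by the inductive hypothesis) and then past $v_{n+1}$ (which, by the base case $\psi_T(R_B\otimes V)\subseteq V\otimes R_B$, lands in $V^{\otimes n+1}\otimes R_B$). Second, using condition (\ref{tm4}) in the analogous way, I would promote this to the inclusion
\[\psi_T\bigl(U^{\otimes m}\otimes R_B\otimes U^{\otimes \ell}\otimes T(V)\bigr)\subseteq T(V)\otimes U^{\otimes m}\otimes R_B\otimes U^{\otimes \ell}:\]
the outer factors $U^{\otimes m}$ and $U^{\otimes \ell}$ are moved across $T(V)$ separately, leaving the $R_B$-part to be handled by the first stage. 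Summing over bihomogeneous components yields $\psi_T((R_B)\otimes T(V))\subseteq T(V)\otimes (R_B)$, and a symmetric argument based on the hypothesis $\psi_T(U\otimes R_A)\subseteq R_A\otimes U$ gives $\psi_T(T(U)\otimes (R_A))\subseteq (R_A)\otimes T(U)$; hence $\psi_T(K)\subseteq K'$.

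Finally, since $\psi_T$ is bijective and each bihomogeneous piece of the two inclusions above matches dimensions on the two sides, the inclusions are in fact equalities, so $\psi_T(K)=K'$ and $\psi_T$ descends to a bijective linear map $\psi:B\otimes A\to A\otimes B$ making (\ref{diag-qd}) commute. Conditions (\ref{tm1})--(\ref{tm3}) for $\psi$ are immediate from the corresponding conditions for $\psi_T$ together with unitality and grading-preservation of $\pi_A$ and $\pi_B$, while (\ref{tm4}) and (\ref{tm5}) follow by projecting the identities satisfied by $\psi_T$ through the surjections $\pi_B\otimes\pi_A$ and $\pi_A\otimes\pi_B$. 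The principal obstacle is the careful bookkeeping in the two-stage inductive extension from the generating subspace $R_B\otimes V$ to the two-sided ideal $(R_B)\otimes T(V)$ (and its analogue for $R_A$), but this is essentially mechanical once the order of passages dictated by (\ref{tm4}) and (\ref{tm5}) is fixed.
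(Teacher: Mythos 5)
Your proposal is correct and follows essentially the same route as the paper: the paper's proof likewise deduces $\psi_T(I_B\otimes T(V))\subseteq T(V)\otimes I_B$ and $\psi_T(T(U)\otimes I_A)\subseteq I_A\otimes T(U)$ from conditions (\ref{tm4}) and (\ref{tm5}) and concludes that $\psi_T$ descends; you merely spell out the inductive bookkeeping (and the dimension-count argument for bijectivity of the induced map) that the paper leaves implicit.
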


\begin{proof} Let $I_A=(R_A)$ and $I_B=(R_B)$. Conditions (\ref{tm4}) and (\ref{tm5}) imply that $\psi_T(I_B\otimes T(V))\subseteq T(V)\otimes I_B$ and $\psi_T(T(U)\otimes I_A)\subseteq I_A\otimes T(U)$, which in turn imply that $\psi_T$ induces a twisting map $\psi:B\otimes A\to A\otimes B$.
\end{proof}

% % % % % % % % % % % % % % % % % % % % % %
\section{Twisted Segre products of $k[u,v]$ and $k[x,y]$} \label{sec-2poly}
% % % % % % % % % % % % % % % % % % % % % %
Let $A=k[u,v]$ and $B=k[x,y]$ be polynomial algebras of two variables.
In this section, we study twisted Segre products of $A$ and $B$.

\subsection{Twisting maps}
In this subsection, we calculate twisting maps for $A=k[u,v]$ and $B=k[x,y]$. Let $V=k u\oplus k v$ and $U=k x\oplus k y$,
and let $\psi_0:U\otimes V\to V\otimes U$ be a bijective linear map.
The map $\psi$ may be represented in the following way:
 \begin{equation}\label{twmm1}
   \psi_0(x\otimes \left(
                    \begin{array}{c}
                      u \\
                      v \\
                    \end{array}
                  \right)
   )=C\left(
                    \begin{array}{c}
                      u \\
                      v \\
                    \end{array}
                  \right)\otimes x+D\left(
                    \begin{array}{c}
                      u \\
                      v \\
                    \end{array}
                  \right)\otimes y,
 \end{equation}
 \begin{equation}\label{twmm2}
   \psi_0(y\otimes \left(
                    \begin{array}{c}
                      u \\
                      v \\
                    \end{array}
                  \right)
   )=P\left(
                    \begin{array}{c}
                      u \\
                      v \\
                    \end{array}
                  \right)\otimes x+Q\left(
                    \begin{array}{c}
                      u \\
                      v \\
                    \end{array}
                  \right)\otimes y,
 \end{equation}
 where $C,D,P,Q$ are $2\times 2$-matrices.

 Assume $C=(c_{ij})$, $D=(d_{ij})$, $P=(p_{ij})$ and $Q=(q_{ij})$. Consider the $4\times 4$-matrix
 $$H=\left(
      \begin{array}{cc}
        C & D \\
        P & Q \\
      \end{array}
    \right).
 $$
 Define a new matrix $S(H)$ by permuting entries of $H$ in the following way
 $$S(H)=\left(
          \begin{array}{cccc}
            c_{11} & d_{11} & c_{12} & d_{12} \\
            p_{11} & q_{11} & p_{12} & q_{12} \\
            c_{21} & d_{21} & c_{22} & d_{22} \\
            p_{21} & q_{21} & p_{22} & q_{22} \\
          \end{array}
        \right).
 $$
 Set $$\widetilde{C}=\left(
                      \begin{array}{cc}
                        c_{11} & d_{11}  \\
                        p_{11} & q_{11}
                      \end{array}
                    \right),
 \widetilde{D}=\left(
                      \begin{array}{cc}
                        c_{12} & d_{12} \\
                       p_{12} & q_{12}
                      \end{array}
                    \right),$$
                     $$\widetilde{P}= \left(
                                       \begin{array}{cc}
                                          c_{21} & d_{21}  \\
                                         p_{21} & q_{21}
                                       \end{array}
                                     \right), \widetilde{Q}=\left(
                                            \begin{array}{cc}
                                              c_{22} & d_{22} \\
                                             p_{22} & q_{22}
                                            \end{array}
                                          \right).$$
 Then we have
 \begin{equation}\label{twmm3}
   \psi_0( \left(
                    \begin{array}{c}
                      x \\
                      y \\
                    \end{array}
                  \right)\otimes u
   )=u\otimes\widetilde{C}\left(
                    \begin{array}{c}
                      x \\
                      y \\
                    \end{array}
                  \right) +v\otimes \widetilde{D}\left(
                    \begin{array}{c}
                      x \\
                      y \\
                    \end{array}
                  \right),
 \end{equation}
 \begin{equation}\label{twmm4}
   \psi_0(\left(
                    \begin{array}{c}
                      x \\
                      y \\
                    \end{array}
                  \right)\otimes v
   )=u\otimes \widetilde{P}\left(
                    \begin{array}{c}
                      x \\
                      y \\
                    \end{array}
                  \right)+v\otimes \widetilde{Q}\left(
                    \begin{array}{c}
                      x \\
                      y \\
                    \end{array}
                  \right).
 \end{equation}

 \begin{rem} (1) We indeed obtain a map $S:M_4(k)\to M_4(k)$, where $M_4(k)$ is the algebra of $4\times 4$-matrices over $k$. By the above definition, $S^2(H)=H$ for every $H\in M_4(k)$.

 (2) $S$ preserves the multiplication of matrices, and hence $S$ is an algebra automorphism of $M_4(k)$.
 \end{rem}

 Let $\psi:B\otimes A \to A\otimes B$ be a twisting map such that $\psi|_{U\otimes V}=\psi_0$. The we have the following equalities:
 \begin{equation}\label{eq-ctw1}
   \psi(xy\otimes \left(
                    \begin{array}{c}
                      u \\
                      v \\
                    \end{array}
                  \right)
   )=\psi(yx\otimes \left(
                    \begin{array}{c}
                      u \\
                      v \\
                    \end{array}
                  \right)),
 \end{equation}

 \begin{equation}\label{eq-ctw2}
   \psi(\left(
                    \begin{array}{c}
                      x \\
                      y \\
                    \end{array}
                  \right)\otimes uv)=\psi(\left(
                    \begin{array}{c}
                      x \\
                      y \\
                    \end{array}
                  \right)\otimes vu).
 \end{equation}

 By Equations (\ref{twmm1}) and (\ref{twmm2}), we have
 \begin{eqnarray*}
 % \nonumber to remove numbering (before each equation)
   \psi(xy\otimes \left(
                    \begin{array}{c}
                      u \\
                      v \\
                    \end{array}
                  \right)
   )=&PC\left(
                    \begin{array}{c}
                      u \\
                      v \\
                    \end{array}
                  \right)\otimes x^2+PD\left(
                    \begin{array}{c}
                      u \\
                      v \\
                    \end{array}
                  \right)\otimes yx\\
                  &+QC\left(
                    \begin{array}{c}
                      u \\
                      v \\
                    \end{array}
                  \right)\otimes xy+ QD\left(
                    \begin{array}{c}
                      u \\
                      v \\
                    \end{array}
                  \right)\otimes y^2,
 \end{eqnarray*}
 \begin{eqnarray*}
 % \nonumber to remove numbering (before each equation)
   \psi(yx \otimes \left(
                    \begin{array}{c}
                      u \\
                      v \\
                    \end{array}
                  \right)
   )=&CP\left(
                    \begin{array}{c}
                      u \\
                      v \\
                    \end{array}
                  \right)\otimes x^2+DP\left(
                    \begin{array}{c}
                      u \\
                      v \\
                    \end{array}
                  \right)\otimes xy\\
                  &+CQ\left(
                    \begin{array}{c}
                      u \\
                      v \\
                    \end{array}
                  \right)\otimes yx+D Q\left(
                    \begin{array}{c}
                      u \\
                      v \\
                    \end{array}
                  \right)\otimes y^2.
 \end{eqnarray*}
 Since $xy=yx$, Equation (\ref{eq-ctw1}) implies $$CP=PC, DQ=QD, DP+CQ=PD+QC.$$
 Similarly, Equations (\ref{twmm1}), (\ref{twmm2}) and (\ref{eq-ctw2}) imply $$\widetilde{C}\widetilde{P}=\widetilde{P}\widetilde{C}, \widetilde{D}\widetilde{Q}=\widetilde{Q}\widetilde{D}, \widetilde{D}\widetilde{P}+\widetilde{C}\widetilde{Q}=\widetilde{P}\widetilde{D}+\widetilde{Q}\widetilde{C}.$$

 Summarizing the above narratives, we obtain the following result.

 \begin{prop}\label{prop-twmc}
 Let $A=k[u,v], B=k[x,y]$ and let $V=k u\oplus k v, U=k x\oplus k y$. Assume that $\psi_0:U\otimes V\to V\otimes U$ is a bijective linear map, and $\psi_0$ is represented by (\ref{twmm1}) and (\ref{twmm2}). Then $\psi_0$ uniquely determines a twisting map $\psi:B\otimes A\to A\otimes B$ if and only if
 \begin{equation}\label{eq-cond1}
   CP=PC, DQ=QD, DP+CQ=PD+QC,
 \end{equation}
 and
 \begin{equation}\label{eq-cond2}
   \widetilde{C}\widetilde{P}=\widetilde{P}\widetilde{C}, \widetilde{D}\widetilde{Q}=\widetilde{Q}\widetilde{D}, \widetilde{D}\widetilde{P}+\widetilde{C}\widetilde{Q}=\widetilde{P}\widetilde{D}+\widetilde{Q}\widetilde{C}.
 \end{equation}
 \end{prop}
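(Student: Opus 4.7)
My plan is to reduce the statement to a direct application of Proposition \ref{prop-quad}. Since $A = k[u,v]$ and $B=k[x,y]$ are quadratic with relation spaces $R_A = k(uv-vu) \subseteq V\otimes V$ and $R_B = k(xy-yx) \subseteq U\otimes U$, Proposition \ref{prop-quad} says that the linear map $\psi_0$ extends to a twisting map $\psi : B\otimes A \to A\otimes B$ (necessarily unique, since $A,B$ are generated in degree one and such a $\psi$ is determined by $\psi_0$ by Remark \ref{rem-tsp}(1)) if and only if
\[\psi_T(R_B\otimes V)\subseteq V\otimes R_B \quad \text{and} \quad \psi_T(U\otimes R_A)\subseteq R_A\otimes U.\]
The first inclusion is equivalent to the equality (\ref{eq-ctw1}) holding for both choices of the right-hand variable; the second is equivalent to (\ref{eq-ctw2}).

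For the forward direction, I would simply note that the computations already displayed before the statement expand $\psi_T(xy\otimes u), \psi_T(xy\otimes v), \psi_T(yx\otimes u), \psi_T(yx\otimes v)$ in the basis $\{x^2, xy, yx, y^2\}$ of $T(U)_2$ using the iterative definition of $\psi_T$ from $\psi_0$ (equations (\ref{twmm1})--(\ref{twmm2})). An element of $V\otimes U\otimes U$ lies in $V\otimes R_B$ precisely when its $x^2$ and $y^2$ coefficients vanish and its $xy$-coefficient is the negative of its $yx$-coefficient. Applying these vanishing conditions coefficient-wise to $\psi_T(xy\otimes u - yx\otimes u)$ and $\psi_T(xy\otimes v - yx\otimes v)$, and using that the result must hold for the pair $(u,v)$ as a free row input, gives the three matrix identities $PC=CP$, $QD=DQ$, and $PD+QC=DP+CQ$, which are exactly (\ref{eq-cond1}). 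The symmetric computation using (\ref{twmm3})--(\ref{twmm4}) in place of (\ref{twmm1})--(\ref{twmm2}) produces the tilded identities (\ref{eq-cond2}).

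For the converse, assume (\ref{eq-cond1}) and (\ref{eq-cond2}). Reversing the coefficient comparison above shows that $\psi_T(xy\otimes u - yx\otimes u)$ and $\psi_T(xy\otimes v - yx\otimes v)$ lie in $V\otimes R_B$, so $\psi_T(R_B\otimes V)\subseteq V\otimes R_B$; analogously (\ref{eq-cond2}) yields $\psi_T(U\otimes R_A)\subseteq R_A\otimes U$. Proposition \ref{prop-quad} then produces the required twisting map $\psi : B\otimes A \to A\otimes B$ extending $\psi_0$, and uniqueness is immediate from the compatibility with the multiplications of $A$ and $B$.

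I do not expect any real obstacle here; the content is bookkeeping. The one point requiring slight care is the reformulation of the tensor-factor conditions coming from $V\otimes R_A$ (versus $R_B\otimes V$): the auxiliary matrices $\widetilde{C},\widetilde{D},\widetilde{P},\widetilde{Q}$ and the repackaging map $S$ introduced earlier are precisely what allow the second batch of computations, based on (\ref{twmm3})--(\ref{twmm4}), to look formally identical to the first batch based on (\ref{twmm1})--(\ref{twmm2}), so once one records that $S$ is an algebra homomorphism of $M_4(k)$, the derivation of (\ref{eq-cond2}) is verbatim the same as that of (\ref{eq-cond1}).
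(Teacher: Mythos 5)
Your proposal is correct and takes essentially the same route as the paper: expand $\psi_T$ on $xy,yx$ against $u,v$ (and symmetrically, via the tilded matrices, on $x,y$ against $uv,vu$), compare coefficients to translate preservation of $R_A$ and $R_B$ into (\ref{eq-cond1}) and (\ref{eq-cond2}), and get sufficiency from Proposition \ref{prop-quad} and uniqueness from Remark \ref{rem-tsp}. The only nuance is that Proposition \ref{prop-quad} is stated as an ``if'', not an ``if and only if''; the converse inclusions $\psi_T(R_B\otimes V)\subseteq V\otimes R_B$ and $\psi_T(U\otimes R_A)\subseteq R_A\otimes U$ for an existing twisting map $\psi$ are exactly what the paper verifies in the paragraph preceding diagram (\ref{diag-qd}), so this is a citation detail rather than a gap.
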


\subsection{Diagonal twisting maps}
In this subsection, we focus on a special case of twisting maps for $A=k[u,v]$ and $B=k[x,y]$,
specifically the case when twisting maps are diagonal (cf. Definition \ref{def-diag}).
By Proposition \ref{prop-twmc}, we have the following.

\begin{prop}\label{prop-dtwmc}
Let $A=k[u,v], B=k[x,y]$ and let $V=k u\oplus k v, U=k x\oplus k y$. Assume that $\psi_0:U\otimes V\to V\otimes U$ is a bijective linear map, and $\psi_0$ is represented by (\ref{twmm1}) and (\ref{twmm2}). Then $\psi_0$ uniquely determines a diagonal twisting map $\psi:B\otimes A\to A\otimes B$ if and only if
\begin{equation}\label{eq-condd1}
   D=P=0 \quad \text{and} \quad  CQ=QC.
\end{equation}
% Equations in (\ref{eq-cond2}) are automatically satisfied).
\end{prop}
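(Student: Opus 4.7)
The plan is to reduce everything to Proposition \ref{prop-twmc}, so the only real work is to translate the ``diagonal'' hypothesis into a condition on the four $2\times 2$-blocks $C, D, P, Q$ and then to check which of the six commutation relations in \eqref{eq-cond1} and \eqref{eq-cond2} become automatic.

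First I would unpack the diagonality condition. With the basis $y_1=x, y_2=y$ of $U$, Definition \ref{def-diag} requires $\sigma_{12}=\sigma_{21}=0$, i.e.\ the component of $\psi(x\otimes a)$ along $y=y_2$ vanishes and the component of $\psi(y\otimes a)$ along $x=y_1$ vanishes for every $a\in A$. Applied to $a\in A_1=V$, formulas \eqref{twmm1} and \eqref{twmm2} force $D\binom{u}{v}=0$ and $P\binom{u}{v}=0$, hence $D=P=0$. Conversely, once $D=P=0$ holds, the restriction $\psi_0$ is diagonal on generators, and the inductive construction (Conditions \eqref{tm4} and \eqref{tm5}) propagates diagonality to all of $B\otimes A$, so the resulting $\psi$ (whenever it exists) is automatically diagonal.

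Next I would substitute $D=P=0$ into \eqref{eq-cond1} and \eqref{eq-cond2}. In \eqref{eq-cond1} the first two identities $CP=PC$ and $DQ=QD$ become $0=0$, while the third collapses to $CQ=QC$. For \eqref{eq-cond2} I expect the key observation: with $D=P=0$, the off-diagonal entries of the $4\times 4$ matrix $H$ that would contribute to the off-diagonal entries of $\widetilde{C},\widetilde{D},\widetilde{P},\widetilde{Q}$ all vanish. Concretely,
\begin{align*}
\widetilde{C}=\begin{pmatrix} c_{11} & 0 \\ 0 & q_{11}\end{pmatrix},\
\widetilde{D}=\begin{pmatrix} c_{12} & 0 \\ 0 & q_{12}\end{pmatrix},\
\widetilde{P}=\begin{pmatrix} c_{21} & 0 \\ 0 & q_{21}\end{pmatrix},\
\widetilde{Q}=\begin{pmatrix} c_{22} & 0 \\ 0 & q_{22}\end{pmatrix},
\end{align*}
which are all diagonal and hence pairwise commute. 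Thus every identity in \eqref{eq-cond2} is satisfied for free.

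Combining these observations: diagonality is equivalent to $D=P=0$, and under this reduction Proposition \ref{prop-twmc} gives that $\psi_0$ extends to a (necessarily diagonal) twisting map if and only if the single surviving relation $CQ=QC$ holds. I do not anticipate a genuine obstacle; the only thing to be careful about is the bookkeeping for $\widetilde{C},\widetilde{D},\widetilde{P},\widetilde{Q}$, to confirm that all four become diagonal and that no residual off-diagonal block commutation relation remains.
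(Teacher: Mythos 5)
Your proposal is correct and follows exactly the route the paper intends: diagonality is equivalent to $D=P=0$ (since $\sigma_\psi$ is an algebra homomorphism determined on $A_1$, where the off-diagonal components are given by $D$ and $P$), and substituting into Proposition \ref{prop-twmc} leaves only $CQ=QC$, because with $D=P=0$ the matrices $\widetilde{C},\widetilde{D},\widetilde{P},\widetilde{Q}$ are all diagonal and the relations \eqref{eq-cond2} hold automatically. The paper gives no further detail beyond citing Proposition \ref{prop-twmc}, so your write-up simply supplies the same reduction with the routine bookkeeping made explicit.
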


Thus if $\psi:B\otimes A\to A\otimes B$ is a diagonal twisting map, then $\psi_0$ is given by
 \begin{equation}
   \psi_0(x\otimes \left(
                    \begin{array}{c}
                      u \\
                      v \\
                    \end{array}
                  \right)
   )=C\left(
                    \begin{array}{c}
                      u \\
                      v \\
                    \end{array}
                  \right)\otimes x,
 \end{equation}
 \begin{equation}
   \psi_0(y\otimes \left(
                    \begin{array}{c}
                      u \\
                      v \\
                    \end{array}
                  \right)
   )=Q\left(
                    \begin{array}{c}
                      u \\
                      v \\
                    \end{array}
                  \right)\otimes y,
 \end{equation}
where $C, Q$ are invertible $2\times 2$-matrices with $CQ=QC$.

In this case, since $CQ=QC$, there exists an invertible matrix $X$ such that
\[X^{-1}CX=\left(\begin{array}{cc}
a_{11} & 0 \\
a_{21} & a_{22} \\
 \end{array}
\right)
\quad \text{and} \quad
X^{-1}QX=\left(
 \begin{array}{cc}
b_{11} & 0 \\
b_{21} & b_{22} \\
 \end{array}
 \right).
\]
%Since $C$ and $Q$ are invertible, $a_{11}a_{22}\neq0, b_{11}b_{22}\neq0$.
Define an automorphism $\sigma\in \Aut(k[u,v])$ by setting
\[\sigma\left(
          \begin{array}{c}
            u \\
            v \\
          \end{array}
        \right)=X\left(
          \begin{array}{c}
            u \\
            v \\
          \end{array}
        \right),
\]
 and a linear bijective map
$\psi'_0:U\otimes V\to V\otimes U$ by setting
 \begin{equation}\label{eq-twsm1}
 \psi'_0(x\otimes\left(
          \begin{array}{c}
            u \\
            v \\
          \end{array}
        \right))=
        X^{-1}CX
        \left(
          \begin{array}{c}
            u \\
            v \\
          \end{array}
        \right)\otimes x,
 \end{equation}
 \begin{equation}\label{eq-twsm2}
       \psi'_0(y\otimes\left(
          \begin{array}{c}
            u \\
            v \\
          \end{array}
        \right))=
        X^{-1}QX
        \left(
          \begin{array}{c}
            u \\
            v \\
          \end{array}
        \right)\otimes y.
\end{equation}
Since $(X^{-1}CX)(X^{-1}QX)=(X^{-1}QX)(X^{-1}CX)$, it follows from Proposition \ref{prop-twmc} that $\psi'_0$ determines a twisting map
\[\psi':B\otimes A\longrightarrow A\otimes B.\]

We have the following computations
 \begin{eqnarray*}
 % \nonumber to remove numbering (before each equation)
   (\sigma\otimes\id)\psi_0(x\otimes\left(
          \begin{array}{c}
            u \\
            v \\
          \end{array}
        \right)) = (\sigma\otimes\id)(C\left(
          \begin{array}{c}
            u \\
            v \\
          \end{array}
        \right)\otimes x)
   = CX\left(
          \begin{array}{c}
            u \\
            v \\
          \end{array}
        \right)\otimes x,
 \end{eqnarray*}
 \begin{eqnarray*}
 % \nonumber to remove numbering (before each equation)
   \psi'_0(\id\otimes\sigma)(x\otimes\left(
          \begin{array}{c}
            u \\
            v \\
          \end{array}
        \right)) = \psi'_0(x\otimes X\left(
          \begin{array}{c}
            u \\
            v \\
          \end{array}
        \right))
    = X(X^{-1}CX)
    \left(
          \begin{array}{c}
            u \\
            v \\
          \end{array}
        \right)\otimes x.
 \end{eqnarray*}
Thus we have
\[(\sigma\otimes \id)\psi_0(x\otimes\left(
          \begin{array}{c}
            u \\
            v \\
          \end{array}
        \right))=\psi'_0(\id\otimes\sigma)(x\otimes\left(
          \begin{array}{c}
            u \\
            v \\
          \end{array}
        \right)).
\]
Similarly, we have
\[(\sigma\otimes \id)\psi_0(y\otimes\left(
          \begin{array}{c}
            u \\
            v \\
          \end{array}
        \right))=\psi'_0(\id\otimes\sigma)(y\otimes\left(
          \begin{array}{c}
            u \\
            v \\
          \end{array}
        \right)).
\]
Therefore, we have $(\sigma\otimes \id)\psi_0=\psi'_0(\id\otimes\sigma)$.
Since the twisting maps $\psi$ and $\psi'$ are determined by $\psi_0$ and $\psi'_0$ respectively, it follows that $(\sigma\otimes \id)\psi=\psi'(\id\otimes\sigma)$.
By Lemma \ref{lem-tspmorp}, we obtain an isomorphism
\[h_{\sigma,\id}: A\circ_\psi B\overset{\cong}\longrightarrow A\circ_{\psi'}B.\]

Considering this fact, up to isomorphism of twisted Segre products, we may assume that
\[
C=
\left(\begin{array}{cc}
a_{11} & 0 \\
a_{21} & a_{22} \\
 \end{array}
\right)
\quad \text{and} \quad
Q=
\left(\begin{array}{cc}
b_{11} & 0 \\
b_{21} & b_{22} \\
 \end{array}
\right)
\]
are invertible lower triangular matrices such that $CQ=QC$.

\subsection{Defining relations}
Let $A=k[u,v], B=k[x,y]$ and let $\psi:B\otimes  A\to A\otimes B$ be a diagonal twisting map.
By the discussion in the previous subsection, we may assume that $\psi_0$ is given by
 \begin{equation}
   \psi_0(x\otimes \left(
                    \begin{array}{c}
                      u \\
                      v \\
                    \end{array}
                  \right)
   )=C
   \left(
                    \begin{array}{c}
                      u \\
                      v \\
                    \end{array}
                  \right)\otimes x,
\end{equation}
\begin{equation}
   \psi_0(y\otimes \left(
                    \begin{array}{c}
                      u \\
                      v \\
                    \end{array}
                  \right)
   )=Q\left(
                    \begin{array}{c}
                      u \\
                      v \\
                    \end{array}
                  \right)\otimes y,
\end{equation}
where
\[
C=
\left(\begin{array}{cc}
a_{11} & 0 \\
a_{21} & a_{22} \\
 \end{array}
\right)
\quad \text{and} \quad
Q=
\left(\begin{array}{cc}
b_{11} & 0 \\
b_{21} & b_{22} \\
 \end{array}
\right)
\]
are invertible lower triangular matrices such that $CQ=QC$. Thus we have $a_{11}a_{22} \neq0$, $b_{11}b_{22}\neq0$ and
\begin{align}\label{eq.comm}
a_{11}b_{21}+a_{21}b_{22}=a_{21}b_{11}+a_{22}b_{21}.
\end{align}

The aim of this subsection is to show the following.

\begin{thm} \label{thm.rel}
Let $A$, $B$, and $\psi$ be as above.

\begin{enumerate}
\item $A\circ_{\psi} B$ is presented by
\[k\<X,Y,Z,W\>/(f_1, f_2, f_3, f_4, f_5, f_6, f_7),\]
where
\begin{align*}
&f_1=a_{11}XY-a_{22}YX-a_{21}X^2,\\
&f_2= a_{11} ZX -b_{11}XZ,\\
&f_3= a_{11} ZY -b_{21}XZ-b_{22}YZ,\\
&f_4= a_{11}a_{22}  WX +a_{21}b_{11}XZ-a_{11}b_{11}XW,\\
&f_5= a_{11}a_{22}  WY +a_{21}b_{21}XZ+a_{21}b_{22}YZ-a_{11}b_{21}XW-a_{11}b_{22}YW,\\
&f_6=b_{11}ZW-b_{22}WZ-b_{21}Z^2,\\
&f_7=a_{11}XW-a_{22}YZ-a_{21}XZ.
\end{align*}
\item $A\circ_{\psi} B$ is a noncommutative quadric surface.
In particular, it is noetherian, Koszul, and AS-Gorenstein.
\end{enumerate}
\end{thm}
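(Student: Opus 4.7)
My plan is to prove (i) by a direct application of the quadratic-presentation formula of Proposition~\ref{prop-quad}, and then to deduce (ii) by realizing $S := k\langle X,Y,Z,W\rangle/(f_1,\ldots,f_6)$ as a twisted tensor product $R_1\#_\phi R_2$ of two $2$-dimensional AS-regular algebras, and showing that $f_7$ is a regular normal element of $S_2$.

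For (i), Proposition~\ref{prop-quad} gives $A\circ_\psi B\cong T(V\otimes U)/I$, where $I$ is generated by $(1\otimes \psi_0^{-1}\otimes 1)\bigl(R_A\otimes U^{\otimes 2}+V^{\otimes 2}\otimes R_B\bigr)$. Since $\psi$ is diagonal, the matrix of $\psi_0$ on the basis $\{x\otimes u,x\otimes v,y\otimes u,y\otimes v\}$ is block-diagonal with blocks the invertible lower-triangular matrices $C$ and $Q$, which I would invert explicitly. I would then apply $1\otimes \psi_0^{-1}\otimes 1$ to the eight natural generators of $R_A\otimes U^{\otimes 2}+V^{\otimes 2}\otimes R_B$, one for each pair in $\{x,y\}^2\sqcup\{u,v\}^2$, and rewrite each image in the basis $X=u\otimes x$, $Y=v\otimes x$, $Z=u\otimes y$, $W=v\otimes y$ of $V\otimes U$. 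A routine computation shows that the pairs $(x,x),(x,y),(y,y),(u,u)$ yield scalar multiples of $f_1,f_7,f_6,f_2$ respectively, while the remaining four outputs lie in the linear span of $f_2,\ldots,f_7$; hence the eight outputs span a $7$-dimensional subspace with basis $f_1,\ldots,f_7$, the one linear dependence reflecting the $1$-dimensional intersection $R_A\otimes R_B$. This establishes (i).

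For (ii), set $R_1 := k\langle X,Y\rangle/(f_1)$ and $R_2 := k\langle Z,W\rangle/(f_6)$; both are $2$-dimensional Koszul AS-regular algebras, being quantum or Jordan planes (depending on whether $a_{21}$ or $b_{21}$ vanishes) with leading coefficients $a_{11},a_{22},b_{11},b_{22}$ all non-zero. I would then define a linear map $\phi_T:T(Z,W)\otimes T(X,Y)\to T(X,Y)\otimes T(Z,W)$ on generators so that $f_2,f_3,f_4,f_5$ become its commutation identities; its matrix on $\mathrm{span}(Z,W)\otimes\mathrm{span}(X,Y)$ is upper triangular with non-zero diagonal $(b_{11}/a_{11},b_{22}/a_{11},b_{11}/a_{22},b_{22}/a_{22})$, so $\phi_T$ is bijective. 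To verify that $\phi_T$ descends to a twisting map $\phi:R_2\otimes R_1\to R_1\otimes R_2$, I would apply Proposition~\ref{prop-quad}, checking the inclusions $\phi_T((f_6)\otimes\mathrm{span}(X,Y))\subseteq\mathrm{span}(X,Y)\otimes (f_6)$ and $\phi_T(\mathrm{span}(Z,W)\otimes (f_1))\subseteq (f_1)\otimes\mathrm{span}(Z,W)$; after expansion these four compatibilities collapse to the single identity $a_{11}b_{21}+a_{21}b_{22}=a_{21}b_{11}+a_{22}b_{21}$, which is exactly the constraint \eqref{eq.comm} coming from $CQ=QC$. Once $\phi$ is known to be a twisting map, the Wang-Shen theorem cited in the proof of Theorem~\ref{thm-d4} gives that $S\cong R_1\#_\phi R_2$ is a noetherian Koszul AS-regular algebra of global dimension $2+2=4$, with Hilbert series $H_S(t)=1/(1-t)^4$.

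It remains to show that $f_7\in S_2$ is a regular normal element. For normality I would compute, for each generator $g\in\{X,Y,Z,W\}$, an explicit $g'\in S_1$ with $f_7 g=g' f_7$ in $S$; the matching scalars are controlled by $C,Q$ and close thanks to $CQ=QC$. Given normality, the short exact sequence $0\to S(-2)\xrightarrow{\cdot f_7}S\to S/(f_7)\to 0$ yields $H_{S/(f_7)}(t)\ge (1-t^2)H_S(t)=(1+t)/(1-t)^3$, with equality if and only if $\cdot f_7$ is injective. On the other hand, $\dim_k(A\circ_\psi B)_n=\dim_k A_n\cdot\dim_k B_n=(n+1)^2$, so $H_{A\circ_\psi B}(t)=(1+t)/(1-t)^3$ exactly; equality thus forces $f_7$ to be a non-zero-divisor, and $A\circ_\psi B=S/(f_7)$ is by definition a noncommutative quadric surface. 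The three additional conclusions (noetherian, Koszul, AS-Gorenstein) then follow from the general properties of noncommutative quadric hypersurfaces recalled in Section~2. The main obstacle is the verification that $\phi_T$ descends to a twisting map on $R_2\otimes R_1$, since this is where $f_2,\ldots,f_5$ must be simultaneously consistent with both base relations $f_1$ and $f_6$, and is precisely the step where the constraint $CQ=QC$ is indispensable; the normality check for $f_7$ is routine but unavoidable, as producing a $W'\in S_1$ with $f_7 W=W' f_7$ requires invoking all four of $f_2,\ldots,f_5$ together.
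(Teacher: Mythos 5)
Your route is genuinely different from the paper's: the paper proves (i) by checking $f_1,\dots,f_7\in\Ker\pi$ and then comparing Hilbert series, which requires Lemmas \ref{lem.ASrel} and \ref{lem.nor} first, whereas you compute the quadratic relation space directly from the presentation result for twisted Segre products of quadratic algebras (note that this is the unnumbered proposition at the beginning of Section 4, not Proposition \ref{prop-quad}, which is the descent criterion you correctly invoke later). That computation of $(1\otimes\psi_0^{-1}\otimes 1)(R_A\otimes U^{\otimes 2}+V^{\otimes 2}\otimes R_B)$, and your Hilbert-series argument deducing regularity of $f_7$ from normality together with (i) instead of from $S$ being a domain, are sound in outline and are legitimate alternatives to the paper's steps.

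The genuine gap is in (ii), at the step ``the Wang--Shen theorem gives that $S\cong R_1\#_\phi R_2$ is a \emph{noetherian} Koszul AS-regular algebra.'' In this paper AS-regularity includes noetherianity by definition, and \cite[Theorem 2]{WS}, as used in the proof of Theorem \ref{thm-d4}, is only invoked to yield that the twisted tensor product is Koszul of finite global dimension; it does not yield noetherianity. Indeed, noetherianity of $A\#_\psi B$ and of $A\circ_\psi B$ is precisely what the paper cannot deduce in this generality: it is an explicit hypothesis in Theorem \ref{thm-d4} and item (i) of the Question in the introduction. Without noetherianity of $S$ you cannot conclude that $S$ is a noetherian Koszul AS-regular algebra, hence neither that $A\circ_\psi B=S/(f_7)$ is a noncommutative quadric surface, nor the ``noetherian'' and ``AS-Gorenstein'' assertions of (ii) via the Section 2 facts on quadric hypersurfaces. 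Note that $f_4$ and $f_5$ mix $Z$ and $W$, so $S$ is not an iterated Ore extension and noetherianity is not available from the Hilbert basis theorem either; this is exactly why Lemma \ref{lem.ASrel} exhibits $S$ as a double Ore extension of $k\langle X,Y\rangle/(f_1)$ in the sense of Zhang--Zhang and cites \cite[Theorem 0.1]{ZZ2}, which supplies strong noetherianity (and the domain property used there for regularity of $f_7$). To repair your argument you must replace the appeal to \cite{WS} by such a structural result, or otherwise prove $S$ noetherian. A smaller point: even granting \cite{WS}, the isomorphism $S\cong R_1\#_\phi R_2$ is not part of that theorem and needs the short standard verification (the natural surjection from $S$ onto $R_1\#_\phi R_2$, plus the fact that $f_2,\dots,f_5$ rewrite every monomial as a combination of words in $X,Y$ followed by words in $Z,W$, forcing equal Hilbert series $(1-t)^{-4}$).
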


To prove this theorem, we prepare two lemmas.

\begin{lem} \label{lem.ASrel}
If $S$ is $k\<X,Y,Z,W\>/(f_1, f_2, f_3, f_4, f_5, f_6),$ where
\begin{align*}
&f_1=a_{11}XY-a_{22}YX-a_{21}X^2,\\
&f_2= a_{11} ZX -b_{11}XZ,\\
&f_3= a_{11} ZY -b_{21}XZ-b_{22}YZ,\\
&f_4= a_{11}a_{22}  WX +a_{21}b_{11}XZ-a_{11}b_{11}XW,\\
&f_5= a_{11}a_{22}  WY +a_{21}b_{21}XZ+a_{21}b_{22}YZ-a_{11}b_{21}XW-a_{11}b_{22}YW,\\
&f_6=b_{11}ZW-b_{22}WZ-b_{21}Z^2,
\end{align*}
then $S$ is a $4$-dimensional noetherian Koszul AS-regular domain.
\end{lem}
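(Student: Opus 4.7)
My plan is to realize $S$ as a smash product $S\cong C_1\#_\tau C_2$, where $C_1 = k\<X,Y\>/(f_1)$ and $C_2 = k\<Z,W\>/(f_6)$, and then invoke the Koszulness/AS-regularity results for smash products from \cite{WS,SZL} already used in the proof of Theorem \ref{thm-d4}. Since $a_{11},a_{22},b_{11},b_{22}$ are nonzero, each of $C_1, C_2$ can be presented as an Ore extension of a polynomial algebra in one variable by a graded automorphism of degree $0$ and a graded derivation of degree $+1$ (for instance, $C_1\cong k[X][Y;\sigma,\delta]$ with $\sigma(X)=(a_{11}/a_{22})X$ and $\delta(X)=-(a_{21}/a_{22})X^2$), hence each is a $2$-dimensional noetherian Koszul AS-regular domain.

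Next, I would define a linear map $\tau_0:(kZ\oplus kW)\otimes(kX\oplus kY)\to(kX\oplus kY)\otimes(kZ\oplus kW)$ by solving $f_2,f_3,f_4,f_5$ for $Z\otimes X,\,Z\otimes Y,\,W\otimes X,\,W\otimes Y$, respectively. Its matrix is triangular in the obvious bases with diagonal entries $b_{11}/a_{11},\,b_{22}/a_{11},\,b_{11}/a_{22},\,b_{22}/a_{22}$, so $\tau_0$ is bijective. By Proposition \ref{prop-quad}, $\tau_0$ extends to a twisting map $\tau:C_2\otimes C_1\to C_1\otimes C_2$ provided the induced tensor-algebra twisting map $\tau_T$ satisfies $\tau_T((f_6)\otimes(kX+kY))\subseteq(kX+kY)\otimes(f_6)$ and $\tau_T((kZ+kW)\otimes(f_1))\subseteq(f_1)\otimes(kZ+kW)$. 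Verifying these two inclusions is the main technical step: expanding $\tau_T$ on the generators of $(f_1)$ and $(f_6)$ produces several coefficient identities, and the one that is not automatic reduces precisely to the commutativity condition $CQ=QC$, i.e.\ $a_{11}b_{21}+a_{21}b_{22}=a_{21}b_{11}+a_{22}b_{21}$ from (\ref{eq.comm}). I expect this coefficient-chasing to be the main obstacle.

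Once $\tau$ is in hand, the defining relations of the smash product $C_1\#_\tau C_2$ match exactly $f_1,\ldots,f_6$, so $S\cong C_1\#_\tau C_2$. The bijectivity of $\tau_0$ is equivalent to the invertibility of the associated structure map $\sigma_\tau$ from (\ref{eq-sigma}), so the hypotheses of \cite[Theorem 2]{WS} (cf.\ \cite[Theorem 2.11]{SZL}) are satisfied. That theorem gives that $S$ is a noetherian Koszul algebra of global dimension $\gldim C_1+\gldim C_2=4$, and hence a $4$-dimensional noetherian Koszul AS-regular algebra. Finally, the domain property follows because $S$ can alternatively be built as an iterated graded Ore extension $k[X][Y;\sigma_1,\delta_1][Z;\sigma_2][W;\sigma_3,\delta_3]$ (the data $\sigma_2,\sigma_3,\delta_3$ being precisely the moves encoded by $\tau$), and iterated Ore extensions of a domain by an automorphism and a derivation are domains.
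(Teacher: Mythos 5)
Your argument is essentially correct, but it takes a genuinely different route from the paper. The paper never decomposes $S$ as a twisted tensor product of $C_1=k\<X,Y\>/(f_1)$ and $C_2=k\<Z,W\>/(f_6)$; instead it realizes $S$ as a double Ore extension, in the sense of \cite{ZZ1,ZZ2}, of the single algebra $T=k\<X,Y\>/(f_1)$, via an $M_2(T)$-valued homomorphism $\sigma$ built from the automorphism $\phi$ of $T$ given by the matrix $Q$ (with $CQ=QC$ making $\phi$ an automorphism), and then quotes \cite[Theorem 0.1]{ZZ2} to obtain in one stroke that $S$ is a strongly noetherian, Koszul, Auslander regular, Cohen--Macaulay domain of global dimension $4$. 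Your plan does go through: the inclusions required by Proposition \ref{prop-quad} hold, e.g. $\tau_T(Z\otimes f_1)=\tfrac{b_{11}b_{22}}{a_{11}^2}\,f_1\otimes Z$ and $\tau_T(f_6\otimes X)=\tfrac{b_{11}^2}{a_{11}a_{22}}\,X\otimes f_6$, and every nontrivial coefficient identity reduces to (\ref{eq.comm}) in the form $b_{21}(a_{11}-a_{22})=a_{21}(b_{11}-b_{22})$, exactly the identity the paper uses when resolving $(WZ)X$ against $W(ZX)$. Two points need more care than you give them. First, ``the relations match'' does not by itself give $S\cong C_1\#_\tau C_2$: you need the surjection $S\to C_1\#_\tau C_2$ plus the straightening/Hilbert-series estimate showing that the six quadratic relations already rewrite every monomial as a combination of (word in $X,Y$)(word in $Z,W$). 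Second, \cite[Theorem 2]{WS} and \cite[Theorem 2.11]{SZL} are used in this paper (proof of Theorem \ref{thm-d4}) only for Koszulity, the Gorenstein condition and finite global dimension; they are not a source for the noetherian property, and certainly not for $S$ being a domain, so in your write-up both of those conclusions actually rest on your closing remark that $S$ is the iterated graded Ore extension $k[X][Y;\sigma_1,\delta_1][Z;\sigma_2][W;\sigma_3,\delta_3]$. That remark is correct, and verifying it again uses (\ref{eq.comm}) (that $\sigma_2,\sigma_3$, which act on $kX+kY$ by scalar multiples of $\phi$, are automorphisms, and that $\delta_3$ is a well-defined $\sigma_3$-derivation); but once verified it already yields noetherianness, the domain property, AS-regularity of dimension $4$, and Koszulity by standard facts about graded Ore extensions, so the smash-product detour through \cite{WS} becomes optional rather than load-bearing. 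In short: a valid alternative to the paper's double-extension argument, provided you carry out the (\ref{eq.comm})-based verifications and promote the Ore-extension observation from a side remark to the step that actually delivers ``noetherian'' and ``domain''.
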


\begin{proof}
It is easy to see that $S$ is isomorphic to $k\<X,Y,Z,W\>/(f_1', f_2',f_3',f_4', f_5', f_6'),$ where
\begin{align*}
&f_1'=a_{11}'XY-a_{22}'YX-a_{21}'X^2,\\
&f_2'= a_{11}' ZX -b_{11}'XZ,\\
&f_3'= a_{11}' ZY -b_{21}'XZ-YZ,\\
&f_4'= a_{11}'a_{22}'  WX +a_{21}'b_{11}'XZ-a_{11}'b_{11}'XW,\\
&f_5'= a_{11}'a_{22}'WY +a_{21}'b_{21}'XZ+a_{21}'YZ-a_{11}'b_{21}'XW-a_{11}'YW,\\
&f_6'=b_{11}'ZW-WZ-b_{21}'Z^2,
\end{align*}
with $a_{ij}'=a_{ij}b_{22}^{-1}$ and $b_{ij}'=b_{ij}b_{22}^{-1}$.
Therefore, in the rest of this proof,
we assume that $b_{22}=1$ without loss of generality.

One can easily check that $T=k\<X,Y\>/(f_1)$ is a $2$-dimensional noetherian AS-regular algebra. We now verify that $S$ is a double extension of $T$ in the sense of \cite{ZZ1, ZZ2}.
Since $CQ=QC$, the assignment
\[\phi\left(
          \begin{array}{c}
            X \\
            Y \\
          \end{array}
        \right)=Q\left(
          \begin{array}{c}
            X \\
            Y \\
          \end{array}
        \right)
        =\left(\begin{array}{cc}
        b_{11} & 0 \\
        b_{21} & 1 \\
         \end{array}\right)\left(
                  \begin{array}{c}
                    X \\
                    Y \\
                  \end{array}
                \right)
\]
defines a graded algebra automorphism of $T$.
We define an algebra  homomorphism $\sigma: T \to M_2(T)$ by
\[ \sigma
=\begin{pmatrix} \sigma_{11} &0 \\ \sigma_{21} &\sigma_{22} \end{pmatrix}
=\begin{pmatrix} a_{11}^{-1}\phi &0 \\ -a_{21}(a_{11}a_{22})^{-1} \phi  &a_{22}^{-1} \phi  \end{pmatrix}.
\]
Then we can rewrite $f_2,f_3,f_4,f_5$ as
\begin{align*}
&ZX=  a_{11}^{-1}b_{11}XZ = \sigma_{11}(X)Z,\\
&ZY=  a_{11}^{-1}(b_{21}XZ+YZ)=\sigma_{11}(Y)Z,\\
&WX=(a_{11}a_{22})^{-1}(-a_{21}b_{11}XZ+a_{11}b_{11}XW)=\sigma_{21}(X)Z+\sigma_{22}(X)W,\\
&WY=(a_{11}a_{22})^{-1}(-a_{21}b_{21}XZ-a_{21}YZ+a_{11}b_{21}XW+a_{11}YW)
=\sigma_{21}(Y)Z+\sigma_{22}(Y)W.
\end{align*}
Using  (\ref{eq.comm}), under the assumption that $f_2=0,f_3=0,\cdots,f_6=0$, we can check that
the resulting elements obtained from resolving $(WZ)X$ and from resolving
$W(ZX)$ are same,
and the resulting elements obtained from resolving $(WZ)Y$ and from resolving
$W(ZY)$ are same in the sense of \cite[Section 1]{ZZ1}. Thus \cite[Lemma 1.10 (c) and Proposition 1.11]{ZZ1} implies that $S$ is the right double extension of $T$ associated with $\sigma$.
Since
\begin{align*}
&\begin{pmatrix} a_{11}^{-1}\phi&0 \\ -a_{21}(a_{11}a_{22})^{-1}\phi &a_{22}^{-1} \phi \end{pmatrix}
\begin{pmatrix} a_{11}\phi^{-1}  &0  \\ a_{21}\phi^{-1}  &a_{22}\phi^{-1}  \end{pmatrix}\\
&=
\begin{pmatrix} a_{11}\phi^{-1}  &0  \\ a_{21}\phi^{-1}  &a_{22}\phi^{-1}  \end{pmatrix}
\begin{pmatrix} a_{11}^{-1}\phi &0\\ -a_{21}(a_{11}a_{22})^{-1}\phi &a_{22}^{-1}\phi \end{pmatrix}
=\begin{pmatrix}\id_T  &0  \\ 0  &\id_T  \end{pmatrix},
\end{align*}
$\sigma$ is invertible in the sense of \cite{ZZ1, ZZ2}, so it follows from \cite[Proposition 1.13]{ZZ1} that $S$ is the double extension of $T$ associated with $\sigma$.
Hence $T$ is a strongly noetherian, Koszul, Auslander regular and Cohen-Macaulay domain of global dimension $4$ by \cite[Theorem 0.1]{ZZ2}. In particular, it is a noetherian AS-regular algebra.
\end{proof}

\begin{lem} \label{lem.nor}
Let $S$ be as in Lemma \ref{lem.ASrel}. Then $f_7=a_{11}XW-a_{22}YZ-a_{21}XZ$ is a regular normal element of $S$.
\end{lem}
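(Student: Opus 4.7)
The plan is to establish normality by exhibiting an explicit graded algebra automorphism $\nu$ of $S$ such that $f_7 s = \nu(s) f_7$ for all $s \in S$, and then derive regularity from the domain property of $S$ supplied by Lemma \ref{lem.ASrel}.

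Since $S$ is generated in degree one by $X,Y,Z,W$, it suffices to define $\nu$ on these four generators, then to check (a) that $\nu$ preserves each defining relation $f_1,\ldots,f_6$, and (b) that $f_7 s = \nu(s) f_7$ holds in $S$ for each $s \in \{X,Y,Z,W\}$. To discover $\nu$, I would compute each product $f_7 s$ in $S$ by expanding and iteratively applying the rewriting rules encoded in $f_2,\ldots,f_6$ (which put $ZX,ZY,WX,WY,WZ$ as $k$-combinations of monomials of the form $X\ast$ or $Y\ast$ with $\ast\in\{Z,W\}$, together with $Z^2$), together with $f_1$ (which relates $YX$ to $XY$ and $X^2$). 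Then, imposing an ansatz $\nu(s)\in S_1$ and matching coefficients on the resulting PBW-type monomials gives $\nu$ uniquely. A short computation shows $\nu(X)=(b_{11}/a_{22})X$ and $\nu(Z)=(a_{11}/b_{22})Z$ immediately, from the essentially diagonal rules $f_2$ and $f_4$ (resp.\ $f_3$ and $f_6$), while $\nu(Y)$ and $\nu(W)$ acquire off-diagonal corrections of the form $\alpha X$ and $\beta Z$ respectively; the commutation identity \eqref{eq.comm} is exactly what guarantees that the several linear equations in $\alpha$ (and likewise in $\beta$) coming from distinct monomial coefficients are consistent.

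Once the values of $\nu$ on the generators are fixed, one must verify that $\nu$ respects each relation $f_1,\ldots,f_6$. Each such check reduces, after substitution and cancellation, to a short polynomial identity among the scalars $a_{ij},b_{ij}$, and these identities in turn follow from \eqref{eq.comm} together with the obvious $a_{11}a_{22},b_{11}b_{22}\neq 0$. The main obstacle is purely bookkeeping: six relations and four generators lead to a number of computations, but each is mechanical. The diagonal part of $\nu$ is invertible because $\det\nu|_{S_1}=(b_{11}/a_{22})(b_{22}/a_{11})(a_{11}/b_{22})(a_{22}/b_{11})=1$ (times the diagonal on the nilpotent corrections), so $\nu$ is indeed an automorphism.

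Finally, the regularity of $f_7$ is immediate. By Lemma \ref{lem.ASrel}, $S$ is a noetherian connected graded domain, and $f_7$ is visibly nonzero (its coefficient of $XW$ is $a_{11}\neq 0$). Any nonzero normal element of a domain is regular, so $f_7$ is a regular normal element of $S$, completing the proof.
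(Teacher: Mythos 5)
Your proposal is correct and takes essentially the same route as the paper: one shows $f_7 s=\nu(s)f_7$ for $s\in\{X,Y,Z,W\}$ by rewriting with $f_1,\dots,f_6$ and the identity \eqref{eq.comm} (the paper records exactly $\nu(X)=\frac{b_{11}}{a_{22}}X$, $\nu(Z)=\frac{a_{11}}{b_{22}}Z$, and lower-triangular corrections for $Y$ and $W$), and regularity follows because $S$ is a domain by Lemma \ref{lem.ASrel}. The only cosmetic difference is that once the degree-one commutation rules hold with an invertible linear map on $S_1$, the equality $f_7S=Sf_7$ is automatic, so your separate verification that $\nu$ preserves $f_1,\dots,f_6$ is unnecessary.
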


\begin{proof}
Since $S$ is a domain by Lemma \ref{lem.ASrel}, $f_7$ is a regular element.
Moreover, one can check
\begin{align*}
f_7X &= \frac{b_{11}}{a_{22}}Xf_7,\\
f_7Y &= \frac{(a_{11}b_{21}+a_{21}b_{22})X+a_{22}b_{22}Y}{a_{11}a_{22}}f_7,\\
f_7Z &= \frac{a_{11}}{b_{22}}Zf_7,\\
f_7W &= \frac{(a_{11}b_{21}+a_{21}b_{22})Z+a_{22}b_{22}W}{b_{11}b_{22}}f_7,
\end{align*}
so $f_7$ is a normal element. Indeed, we have
\begin{align*}
&(a_{11}XW-a_{22}YZ-a_{21}XZ)X\\
&=-a_{22}^{-1}X(a_{21}b_{11}XZ-a_{11}b_{11}XW)
-a_{22}b_{11}a_{11}^{-1}YXZ -a_{21}b_{11}a_{11}^{-1}XXZ\\
&=-a_{22}^{-1}X(a_{21}b_{11}XZ-a_{11}b_{11}XW) -b_{11}a_{11}^{-1}(a_{11}XY-a_{21}X^2)Z -a_{21}b_{11}a_{11}^{-1}XXZ\\
&=\frac{b_{11}}{a_{22}}X(a_{11}XW-a_{22}YZ-a_{21}XZ)
\end{align*}
and
\begin{align*}
&(a_{11}XW-a_{22}YZ-a_{21}XZ)Y\\
&=a_{22}^{-1}X(X(-a_{21}b_{21}Z+a_{11}b_{21}W)+Y(-a_{21}b_{22}Z+a_{11}b_{22}W))\\
&\qquad\qquad -a_{22}a_{11}^{-1}Y(b_{21}XZ+b_{22}YZ) -a_{21}a_{11}^{-1}X(b_{21}XZ+b_{22}YZ)\\
&=a_{22}^{-1}X^2(-a_{21}b_{21}Z+a_{11}b_{21}W)
+(a_{11}a_{22})^{-1}b_{22}(a_{22}YX+a_{21}X^2)(-a_{21}Z+a_{11}W)\\
&\qquad\qquad +b_{21}a_{11}^{-1}(a_{11}XY-a_{21}X^2)Z-a_{22}b_{22}a_{11}^{-1}Y^2Z -a_{21}a_{11}^{-1}X(b_{21}XZ+b_{22}YZ)\\
&=(b_{21}a_{22}^{-1}+a_{21}b_{22}(a_{11}a_{22})^{-1})a_{11}X^2W
+b_{22}YXW\\
&\qquad\qquad -(b_{21}a_{22}^{-1}+a_{21}b_{22}(a_{11}a_{22})^{-1})a_{22}XYZ
-a_{11}^{-1}b_{22}a_{22}Y^2Z\\
&\qquad\qquad\qquad\qquad -(b_{21}a_{22}^{-1}+a_{21}b_{22}(a_{11}a_{22})^{-1})a_{21}X^2Z
-a_{11}^{-1}b_{22}a_{21}YXZ\\
&=\frac{(a_{11}b_{21}+a_{21}b_{22})X+a_{22}b_{22}Y}{a_{11}a_{22}}(a_{11}XW-a_{22}YZ-a_{21}XZ).
\end{align*}
In a similar way, using (\ref{eq.comm}), we can verify that the other equations also hold.
\end{proof}

\begin{proof}[\textit{\textbf{Proof of Theorem \ref{thm.rel}}}]
Let $\pi: k\<X,Y,Z,W\> \to A\circ_{\psi} B$ be the surjection defined by
\[ X \mapsto u\otimes x, \quad Y \mapsto v\otimes x,\quad  Z \mapsto u\otimes y, \quad W \mapsto v\otimes y.\]
Since
\begin{align*}
\pi(f_1) &= a_{11}(u\otimes x)(v\otimes x)-a_{22}(v\otimes x)(u\otimes x)-a_{21}(u\otimes x)(u\otimes x)\\
&=a_{11}(u(a_{21}u+a_{22}v)\otimes x^2)-a_{22}(v(a_{11}u)\otimes x^2)-a_{21}(u(a_{11}u)\otimes x^2)\\
&=(a_{11}u(a_{21}u+a_{22}v)
-a_{11}a_{22}uv
-a_{21}a_{11}u^2
)\otimes x^2=0,
\end{align*}
we have $f_1 \in \Ker \pi$.
Moreover, since
\begin{align*}
\pi(f_2)
&= -a_{11}(u\otimes y)(u\otimes x)
+b_{11}(u\otimes x)(u\otimes y)\\
&= -a_{11}(u(b_{11}u)\otimes yx)
+b_{11}(u(a_{11}u)\otimes xy)\\
&= (-a_{11}b_{11}u^2+b_{11}a_{11}u^2)\otimes xy=0,
\end{align*}
we have $f_2 \in \Ker \pi$. Similarly, one can check $f_3,\dots, f_7 \in \Ker \pi$, so
$(f_1, \dots, f_7) \subseteq \Ker \pi$.
By Lemmas \ref{lem.ASrel} and \ref{lem.nor}, we see that the Hilbert series of $k\<X,Y,Z,W\>/(f_1, \dots, f_7)$ is $(1+t)(1-t)^{-3}$,
so the Hilbert series of $k\<X,Y,Z,W\>/(f_1, \dots, f_7)$ and $ A\circ_{\psi} B$ coincide. Therefore, $\pi$ induces an isomorphism
 $k\<X,Y,Z,W\>/(f_1, \dots, f_7) \xrightarrow{\sim} A\circ_{\psi} B$.
Hence the assertion \textrm{(i)} holds. Furthermore, the assertion \textrm{(ii)} directly follows from Lemmas \ref{lem.ASrel} and \ref{lem.nor}.
\end{proof}

By observing the following simple example, we can see that the twisted Segre product is a proper (and probably very wide) extension of the usual Segre product (compare with Proposition \ref{prop.spas}).

\begin{ex}\label{ex.nqs}
Let us consider the case $C= \begin{pmatrix} a_{11} &0 \\ 0 &a_{22} \end{pmatrix}$ and  $Q= \begin{pmatrix} b_{11} &0 \\ 0 &b_{22} \end{pmatrix}$.
In this case, $A\circ_{\psi} B$ is presented by $k\<X,Y,Z,W\>/(f_1, f_2, f_3, f_4, f_5, f_6, f_7)$, where
\begin{align*}
&f_1=a_{11}XY-a_{22}YX,\\
&f_2=a_{11}ZX-b_{11}XZ,\\
&f_3=a_{22}WX-b_{11}XW,\\
&f_4=a_{11}ZY-b_{22}YZ,\\
&f_5=a_{22}WY-b_{22}YW,\\
&f_6=b_{11}ZW-b_{22}WZ,\\
&f_7=a_{11}XW-a_{22}YZ
\end{align*}
with $a_{11}a_{22} \neq 0, b_{11}b_{22}\neq 0$.
Then we have
\[ \GrMod A\circ_{\psi} B  \cong \GrMod A\circ B \quad \Longleftrightarrow \quad a_{11}b_{22}= a_{22}b_{11}.\]

($\Leftarrow$) We have that $\phi(X) = b_{11}^{-1}X, \phi(Y) = b_{22}^{-1}Y, \phi(Z) = a_{11}^{-1}Z, \phi(W) = a_{22}^{-1}W$ defines an automorphism $\phi$ of $A\circ_{\psi} B$. Moreover, the Zhang twist $(A\circ_{\psi} B)^{\phi}$ is isomorphic to $A\circ B$ (see \cite{Z} for Zhang twists). Indeed, one can check
\begin{align*}
&X*Y = X\phi(Y)= \frac{1}{b_{22}}XY=  \frac{a_{22}}{a_{11}b_{22}}YX= \frac{1}{b_{11}}YX = Y\phi(X)= Y*X,\\
&Z*X = Z\phi(X)= \frac{1}{b_{11}}ZX =\frac{b_{11}}{a_{11}b_{11}}XZ = \frac{1}{a_{11}}XZ= X\phi(Z)= X*Z,
\end{align*}
and so on. Hence it follows that $\GrMod A\circ_{\psi} B  \cong \GrMod A\circ B$ by \cite[Theorem 1.1]{Z}.

($\Rightarrow$) We now assume that $\GrMod A\circ_{\psi} B  \cong \GrMod A\circ  B$ and $a_{11}b_{22} \neq a_{22}b_{11}$ and show that it induces a contradiction.
Since $A\circ  B$ is commutative and $\GrMod A\circ_{\psi} B  \cong \GrMod A\circ  B$,
the point scheme of $A\circ_{\psi} B$ is isomorphic to $\Proj (A\circ  B) \cong V(XW-YZ) \subset  \PP^3$ (a smooth quadric surface).
Let $S= k\<X,Y,Z,W\>/(f_1, f_2, f_3, f_4, f_5, f_6)$. Since a point module over $A\circ_{\psi} B$ can be regarded as a point module over $S$, the point scheme of $A\circ_{\psi} B$ is contained in the point scheme of $S$. On the other hand, the condition $\frac{a_{11}b_{22}}{a_{22}b_{11}}\neq 1$ implies that the point scheme of $S$ is isomorphic to
$V (X,Y) \cup V (X,Z) \cup V (X,W) \cup V (Y,Z) \cup V (Y,W) \cup V (Z,W) \subset  \PP^3$
(a union of six lines) by \cite[Proposition 4.2]{Vi} or \cite[Theorem 1(1)]{BDL}. Thus a smooth quadric surface is included in a union of six lines, which is a contradiction.
\end{ex}

\subsection{Stable categories of graded maximal Cohen-Macaulay modules}
We continue to use the notation of the previous subsection, that is, $A=k[u,v], B=k[x,y]$, and $\psi:B\otimes  A\to A\otimes B$ is a diagonal twisting map induced by
 \begin{equation}
   \psi_0(x\otimes \left(
                    \begin{array}{c}
                      u \\
                      v \\
                    \end{array}
                  \right)
   )=\left(\begin{array}{cc}
   a_{11} & 0 \\
   a_{21} & a_{22} \\
    \end{array}
   \right)
   \left(
                    \begin{array}{c}
                      u \\
                      v \\
                    \end{array}
                  \right)\otimes x,
\end{equation}
\begin{equation}
   \psi_0(y\otimes \left(
                    \begin{array}{c}
                      u \\
                      v \\
                    \end{array}
                  \right)
   )=\left(\begin{array}{cc}
   b_{11} & 0 \\
   b_{21} & b_{22} \\
    \end{array}
   \right)\left(
                    \begin{array}{c}
                      u \\
                      v \\
                    \end{array}
                  \right)\otimes y,
\end{equation}
where $a_{11}a_{22} \neq0$, $b_{11}b_{22}\neq0$ and
$a_{21}b_{11}+a_{22}b_{21}=b_{21}a_{11}+b_{22}a_{21}$.
In this subsection, we compute $\uCM^{\mathbb Z}(A\circ_{\psi} B)$.

\begin{lem}\label{lem.Kos}
Let $A=k[u,v], B=k[x,y]$, and let $\psi$ be as in the beginning of this subsection.
Then the Koszul dual $(A\circ_{\psi} B)^!$ is presented by
\[k\<X,Y,Z,W\>/(g_1, g_2, g_3, g_4, g_5, g_6, g_7, g_8, g_9),\]
where
\begin{align*}
&g_1=a_{11}YX+a_{22}XY,\\
&g_2=a_{11}XZ+a_{21}XW+b_{11}ZX+b_{21}ZY,\\
&g_3=b_{22}WY+a_{22}YW,\\
&g_4=b_{22}ZW+b_{11}WZ,\\
&g_5=a_{11}X^2+a_{21}XY,\\
&g_6=Y^2,\\
&g_7=b_{11}Z^2+b_{21}ZW,\\
&g_8=W^2,\\
&g_9=b_{22}ZY+a_{21}YW+a_{11}YZ+a_{22}XW+b_{11}WX+b_{21}WY.
\end{align*}
\end{lem}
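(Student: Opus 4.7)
The plan is to apply the standard Koszul duality formula $A^! = T(V^*)/(R^\perp)$. By Theorem \ref{thm.rel}\textrm{(ii)}, $A\circ_\psi B$ is Koszul, and by Theorem \ref{thm.rel}\textrm{(i)}, it is quadratic of the form $T(V)/(R)$, where $V = kX \oplus kY \oplus kZ \oplus kW$ and $R \subseteq V \otimes V$ is the span of the seven relations $f_1,\ldots,f_7$. Using the dual basis $\{X^*,Y^*,Z^*,W^*\}$ of $V^*$ (relabeled $X,Y,Z,W$ in $(A\circ_\psi B)^!$ by abuse of notation), the task is to show that $\{g_1,\ldots,g_9\}$ is a basis of $R^\perp \subseteq V^* \otimes V^*$.

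First, I check that $f_1,\ldots,f_7$ are linearly independent in $V\otimes V$. This is visible by inspection: the monomials $XY$, $ZX$, $ZY$, $WX$, $WY$, $ZW$, $XW$ appear with nonzero scalar only in $f_1,f_2,f_3,f_4,f_5,f_6,f_7$ respectively. Hence $\dim R = 7$ and $\dim R^\perp = 16 - 7 = 9$. Next, I check that $g_1,\ldots,g_9$ are linearly independent in $V^*\otimes V^*$: the squared monomials $X^2, Y^2, Z^2, W^2$ appear only in $g_5, g_6, g_7, g_8$ respectively, and a routine inspection of the remaining mixed monomials (noting that $YX$, $WY$, $WZ$ appear with nonzero scalar only in $g_1, g_3, g_4$ respectively, while $g_2$ and $g_9$ are distinguished by supports on $\{XZ, XW, ZX, ZY\}$ and $\{ZY, YW, YZ, XW, WX, WY\}$) gives independence.

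It therefore suffices to verify the orthogonality relations $\langle f_i, g_j\rangle = 0$ for $1 \le i \le 7$ and $1 \le j \le 9$ under the standard bilinear pairing $\langle e_a \otimes e_b, e_c^* \otimes e_d^*\rangle = \delta_{ac}\delta_{bd}$. The majority of these $63$ pairings vanish trivially because the monomial supports of $f_i$ and $g_j$ are disjoint. The nontrivial cases are routine cancellations: for example, $\langle f_1, g_1\rangle = a_{11}\!\cdot\!a_{22} + (-a_{22})\!\cdot\!a_{11} = 0$, $\langle f_1, g_5\rangle = (-a_{21})\!\cdot\!a_{11} + a_{11}\!\cdot\!a_{21} = 0$, $\langle f_2, g_2\rangle = a_{11}\!\cdot\!b_{11} + (-b_{11})\!\cdot\!a_{11} = 0$, and $\langle f_5, g_9\rangle = a_{11}a_{22}\!\cdot\!b_{21} + a_{21}b_{22}\!\cdot\!a_{11} + (-a_{11}b_{21})\!\cdot\!a_{22} + (-a_{11}b_{22})\!\cdot\!a_{21} = 0$.

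The main obstacle is purely the calculational bulk—no conceptual difficulty arises. Notably, the compatibility relation (\ref{eq.comm}) is already baked into the existence of the presentation of $A\circ_\psi B$, so it need not be invoked explicitly in the orthogonality check; each pairing vanishes through direct scalar cancellation.
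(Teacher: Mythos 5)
Your approach---identify $A\circ_\psi B$ with $T(V)/(R)$ via Theorem \ref{thm.rel}(i), count $\dim R=7$ so $\dim R^\perp=16-7=9$, and then verify that $g_1,\dots,g_9$ are independent and orthogonal to all $f_i$ under the pairing $\langle e_a\otimes e_b, e_c^*\otimes e_d^*\rangle=\delta_{ac}\delta_{bd}$---is precisely the ``straightforward calculation'' the paper invokes (its proof gives no details), and your sample pairings are correct; the full set of $63$ pairings does vanish by direct scalar cancellation, without appealing to (\ref{eq.comm}), as you observe.

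Two of the supporting observations in your independence checks are inaccurate as stated, though neither affects the conclusion. First, $XW$ occurs not only in $f_7$ but also in $f_4$ and $f_5$, so $f_7$ has no ``private'' monomial; independence of the $f_i$ still follows by triangular elimination (the coefficients of $XY$, $ZX$, $ZY$, $WX$, $WY$, $ZW$ in a vanishing combination force $c_1=\dots=c_6=0$, using $a_{11}a_{22}b_{11}\neq 0$, and then $c_7=0$), or more quickly from the Hilbert series $(1+t)(1-t)^{-3}$ of $A\circ_\psi B$, which gives $\dim (A\circ_\psi B)_2=9$ and hence $\dim R=7$. Second, $WY$ occurs not only in $g_3$ but also in $g_9$ (with coefficient $b_{21}$), so it cannot be used to isolate $g_3$; instead, the coefficients of $X^2$, $Y^2$, $Z^2$, $W^2$, $YX$, $ZX$, $WZ$, $WX$ kill $c_5,c_6,c_7,c_8,c_1,c_2,c_4,c_9$ in turn, and then $c_3=0$. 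With these small repairs your argument is complete and matches the intended computation.
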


\begin{proof}
This follows by a straightforward calculation.
\end{proof}

\begin{lem}\label{lem.nord}
In the setting of Lemma \ref{lem.Kos}, $w=b_{22}ZY+a_{21}YW+a_{11}YZ$ is a regular normal element of $(A\circ_{\psi} B)^!=k\<X,Y,Z,W\>/(g_1, g_2, \dots, g_9)$ such that
\[ (A\circ_{\psi} B)^!/(w)\cong  k\<X,Y,Z,W\>/(g_1, g_2, \dots , g_9, w) \cong S^!,\]
where $S$ is as in Lemma \ref{lem.ASrel}.
\end{lem}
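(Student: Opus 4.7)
The plan is to invoke the general theory from Section~2 and then pin down $w$ explicitly by a bilinear computation. By Theorem~\ref{thm.rel}(ii) we have $A\circ_\psi B = S/(f_7)$, where $S$ is the $4$-dimensional noetherian Koszul AS-regular algebra of Lemma~\ref{lem.ASrel} and $f_7\in S_2$ is the regular normal element of Lemma~\ref{lem.nor}. Hence \cite[Corollary~1.4]{ST}, recalled in the preliminaries, guarantees the existence of a regular normal element $w'\in (A\circ_\psi B)^!_2$, unique up to nonzero scalar, such that $(A\circ_\psi B)^!/(w')\cong S^!$. The lemma therefore reduces to showing that the explicit $w = b_{22}ZY+a_{21}YW+a_{11}YZ$ agrees with $w'$ up to a scalar.

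Next, put the standard pairing on $V^*\otimes V^*$ dual to the monomial basis of $V\otimes V$, where $V=\langle X,Y,Z,W\rangle$. The space of quadratic relations of $A\circ_\psi B$ is $R=\langle f_1,\dots,f_7\rangle$, of dimension $7$, and that of $S$ is $R'=\langle f_1,\dots,f_6\rangle$, of dimension $6$. By the quadratic dual construction, $R^\perp=\langle g_1,\dots,g_9\rangle$ (from Lemma~\ref{lem.Kos}) has dimension $9$ and sits inside the $10$-dimensional space $(R')^\perp$ as a codimension-one hyperplane. The class of $w'$ is therefore characterized, up to nonzero scalar, as any element of $(R')^\perp\setminus R^\perp$; equivalently, any $v\in V^*\otimes V^*$ with $\langle v,f_i\rangle=0$ for $1\leq i\leq 6$ and $\langle v,f_7\rangle\neq 0$.

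A short pairing check confirms that $w$ has exactly this property. The relations $f_1, f_2, f_4, f_6$ have monomial supports disjoint from $\{ZY,\,YW,\,YZ\}$, so those pairings vanish trivially. For $f_3$ and $f_5$ the pairings vanish by internal cancellation; for instance $\langle w,f_3\rangle = b_{22}\!\cdot\! a_{11}+a_{11}\!\cdot\!(-b_{22})=0$, arising from the $ZY$- and $YZ$-terms, and the analogous $YZ$/$YW$ contributions in $f_5$ cancel using the coefficients listed in Theorem~\ref{thm.rel}(i). Finally, $\langle w,f_7\rangle = a_{11}\!\cdot\!(-a_{22}) = -a_{11}a_{22}\neq 0$. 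Thus $w = cw'$ for some $c\in k^\times$, and regularity, normality, and the quotient isomorphism $(A\circ_\psi B)^!/(w)\cong S^!$ all follow from \cite[Corollary~1.4]{ST}. The presentation $(A\circ_\psi B)^!/(w)\cong k\<X,Y,Z,W\>/(g_1,\dots,g_9,w)$ is then immediate from Lemma~\ref{lem.Kos}.

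The main obstacle is purely bookkeeping: keeping track of the coefficients $a_{ij},b_{ij}$ and invoking the commutation relation~\eqref{eq.comm} when needed. No deeper conceptual difficulty arises, since the uniqueness clause of \cite[Corollary~1.4]{ST} supplies the structural content and reduces the lemma to an orthogonality verification in a $16$-dimensional space.
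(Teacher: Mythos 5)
Your proposal is correct, but it takes a genuinely different route from the paper's proof. You outsource the hard content (normality and regularity of $w$) to \cite[Corollary 1.4]{ST}, applied to the presentation $A\circ_\psi B\cong S/(f_7)$ provided by Theorem \ref{thm.rel} together with Lemmas \ref{lem.ASrel} and \ref{lem.nor} (no circularity, since none of these depend on the present lemma), and then reduce the lemma to a linear-algebra statement: the class of $w$ spans the one-dimensional degree-two part of the kernel of the natural surjection $(A\circ_{\psi}B)^!\to S^!$. Your orthogonality checks are correct under the pairing convention implicit in Lemma \ref{lem.Kos}: the supports of $f_1,f_2,f_4,f_6$ miss $\{ZY,YW,YZ\}$, the $ZY$/$YZ$ and $YZ$/$YW$ contributions cancel in $\<w,f_3\>$ and $\<w,f_5\>$, and $\<w,f_7\>=-a_{11}a_{22}\neq0$. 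The paper instead argues entirely by hand: it exhibits the explicit commutation relations $wX=\cdots w$, $wY=\cdots w$, $wZ=\cdots w$, $wW=\cdots w$ (using \eqref{eq.comm}) to prove normality, verifies $k\<X,Y,Z,W\>/(g_1,\dots,g_9,w)\cong S^!$ by direct calculation, and deduces regularity from a Hilbert series comparison. Your route is shorter and less error-prone, at the mild cost of resting on the reading of \cite[Corollary 1.4]{ST} (as quoted in the paper's preliminaries) in which $A^!/(w')=S^!$ is the natural quotient induced by the inclusion of relation spaces $R'\subset R$ --- the standard interpretation, but worth stating explicitly, since it is exactly what lets you characterize $w'$ up to scalar as any element of $(R')^{\perp}\setminus R^{\perp}$. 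What the paper's brute-force computation buys is the explicit normalizing automorphism $\nu_w$, read off from those commutation relations and used immediately afterwards in the proof of Theorem \ref{thm-c} to compute $C(A\circ_{\psi}B)$; with your argument that automorphism would still have to be computed separately.
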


\begin{proof}
One can verify that
\begin{align*}
wX &= \frac{a_{22}b_{22}X-(a_{11}b_{21}+a_{21}b_{22})Y}{b_{11}b_{22}}w,\\
wY &=\frac{a_{11}}{b_{22}}Yw,\\
wZ &= \frac{a_{22}b_{22}Z-(a_{11}b_{21}+a_{21}b_{22})W}{a_{11}a_{22}}w,\\
wW &= \frac{b_{11}}{a_{22}}Ww,
\end{align*}
so $w$ is a normal element of $k\<X,Y,Z,W\>/(g_1,\dots, g_9)$.
Indeed, we have
\begin{align*}
&(b_{22}ZY+a_{21}YW+a_{11}YZ)X\\
&=-(a_{22}XW+b_{11}WX+b_{21}WY)X\\
&=-(a_{22}XWX-b_{11}a_{21}a_{11}^{-1}WXY-b_{21}a_{22}b_{22}^{-1}YWX)\\
&=-(a_{22}XWX-b_{11}a_{21}b_{22}^{-1}YWX-b_{21}a_{22}b_{22}^{-1}YWX)\\
&=-a_{22}XWX+(b_{11}a_{21}+b_{21}a_{22})b_{22}^{-1}YWX\\
&=-a_{22}XWX+(a_{11}b_{21}+a_{21}b_{22})b_{22}^{-1}YWX
\end{align*}
by (\ref{eq.comm}), and
\begin{align*}
&\frac{a_{22}b_{22}X-(a_{11}b_{21}+a_{21}b_{22})Y}{b_{11}b_{22}}(b_{22}ZY+a_{21}YW+a_{11}YZ)\\
&=-(a_{22}b_{11}^{-1}X-(a_{11}b_{21}+a_{21}b_{22})b_{11}^{-1}b_{22}^{-1}Y)(a_{22}XW+b_{11}WX+b_{21}WY)\\
&=-a_{22}^2b_{11}^{-1}XXW+(a_{11}b_{21}+a_{21}b_{22})a_{22}b_{11}^{-1}b_{22}^{-1}YXW\\
&\qquad\qquad-a_{22}XWX+(a_{11}b_{21}+a_{21}b_{22})b_{22}^{-1}YWX\\
&\qquad\qquad\qquad\qquad-a_{22}b_{21}b_{11}^{-1}XWY+(a_{11}b_{21}+a_{21}b_{22})b_{21}b_{11}^{-1}b_{22}^{-1}YWY\\
&=a_{22}^2a_{21}a_{11}^{-1}b_{11}^{-1}XYW-(a_{11}b_{21}+a_{21}b_{22})a_{22}^{2}a_{11}^{-1}b_{11}^{-1}b_{22}^{-1}XYW\\
&\qquad\qquad-a_{22}XWX+(a_{11}b_{21}+a_{21}b_{22})b_{22}^{-1}YWX\\
&\qquad\qquad\qquad\qquad+a_{22}^2b_{21}b_{11}^{-1}b_{22}^{-1}XYW+(a_{11}b_{21}+a_{21}b_{22})a_{22}b_{21}b_{11}^{-1}b_{22}^{-2}Y^2W\\
&=-a_{22}XWX+(a_{11}b_{21}+a_{21}b_{22})b_{22}^{-1}YWX,
\end{align*}
so
\[(b_{22}ZY+a_{21}YW+a_{11}YZ)X = \frac{a_{22}b_{22}X-(a_{11}b_{21}+a_{21}b_{22})Y}{b_{11}b_{22}}(b_{22}ZY+a_{21}YW+a_{11}YZ).\]
In addition, the other equations hold in the same way.
By direct calculations, we have
\[k\<X,Y,Z,W\>/(g_1,\dots, g_9 ,w) \cong S^!.\]
Moreover, by use of the Hilbert series, it follows that $w$ is regular.
\end{proof}

Note that it follows from the proof of Lemma \ref{lem.nord} that the normalizing automorphism $\nu:=\nu_w$ of the normal element  $w \in k\<X,Y,Z,W\>/(g_1, \dots, g_9)=(A\circ_{\psi} B)^!$ is given by
\begin{align*}
&\nu(X)= \frac{a_{11}b_{11}X+(a_{11}b_{21}+a_{21}b_{22})Y}{a_{11}a_{22}}\\
&\nu(Y)= \frac{b_{22}}{a_{11}}Y\\
&\nu(Z)= \frac{a_{11}b_{11}Z+(a_{11}b_{21}+a_{21}b_{22})W}{b_{11}b_{22}}\\
&\nu(W)= \frac{a_{22}}{b_{11}}W.
\end{align*}

We then calculate the algebra $C(A\circ_{\psi} B)$.
Recall that $C(A\circ_{\psi} B) = (A\circ_{\psi} B)^![w^{-1}]_0=\{aw^{-i}\mid a\in (A\circ_{\psi} B)^!_{2i}, i\in \NN\}$ is a finite dimensional algebra with the multiplication $(aw^{-i})(bw^{-j})=a\nu^i(b)w^{-i-j}$. Moreover, we have an equivalence of triangulated categories $\uCM^{\ZZ}(A\circ_{\psi} B) \overset{\cong}{\longrightarrow} \Db(\mod C(A\circ_{\psi} B))$ by Proposition \ref{prop.C(A)}.

\begin{thm}\label{thm-c}
Let $A=k[u,v], B=k[x,y]$, and let $\psi$ be as in the beginning of this subsection.
Let $w$ be the normal element of $(A\circ_{\psi} B)^!$ as in Lemma \ref{lem.nord}.
Then $C(A\circ_{\psi} B)$ is isomorphic to $M_2(k) \times M_2(k)$.
\end{thm}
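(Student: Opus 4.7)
The plan is to pin down the structure of $C(A\circ_\psi B)$ by determining its dimension, then its semisimplicity, and finally its precise Wedderburn form.

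First, I would compute $\dim_k C(A\circ_\psi B)=8$. By Theorem \ref{thm.rel}(ii), the algebra $A\circ_\psi B$ is Koszul with Hilbert series $(1+t)/(1-t)^3$, so $(A\circ_\psi B)^!$ has Hilbert series $(1+t)^3/(1-t)$. Using the short exact sequence $0\to(A\circ_\psi B)^!(-2)\xrightarrow{\cdot w}(A\circ_\psi B)^!\to S^!\to 0$ from Lemma \ref{lem.nord} together with $H_{S^!}(t)=(1+t)^4$, the filtration $F_i=(A\circ_\psi B)^!_{2i}\cdot w^{-i}$ of $C(A\circ_\psi B)$ has successive quotient dimensions $\dim S^!_{2i}$. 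This yields $\dim F_0=1$, $\dim F_1=7$, $\dim F_2=8$, and $F_i=F_2$ for $i\geq 2$, so $\dim_k C(A\circ_\psi B)=8$.

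Second, by Theorem \ref{thm-d4}, $A\circ_\psi B$ is a noncommutative graded isolated singularity, hence Proposition \ref{prop.isol} gives that $C(A\circ_\psi B)$ is semisimple. Since $k$ is algebraically closed, $C(A\circ_\psi B)\cong\prod_i M_{n_i}(k)$ with $\sum n_i^2=8$, leaving only three possibilities: $M_2(k)\times M_2(k)$, $M_2(k)\times k^4$, or $k^8$, distinguished by the number of simple factors (2, 5, or 8 respectively).

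Third, to single out $M_2(k)\times M_2(k)$, I would explicitly produce two orthogonal idempotents in $C(A\circ_\psi B)$ whose corner algebras are each copies of $M_2(k)$. Motivated by the two rulings of the Segre quadric in the commutative case and by the appearance of $XW$, $YZ$, $XZ$ in the defining relation $f_7$ of Theorem \ref{thm.rel}(i), I would use the formula for $\nu$ following Lemma \ref{lem.nord} to compute the degree-$4$ products $(YZ)\nu(YZ)$, $(XW)\nu(XW)$, and their cross terms, reducing them via the relations $g_1,\ldots,g_9$. Suitable linear combinations and scalar normalizations should then yield the required matrix units and idempotents in $C(A\circ_\psi B)$.

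The main obstacle is this third step: the intricate formulas for $\nu$ together with the non-commutativity of $(A\circ_\psi B)^!$ make the reductions delicate. An alternative strategy is to construct two non-isomorphic $2$-dimensional simple $C(A\circ_\psi B)$-modules (for instance, from rank-$1$ graded maximal Cohen-Macaulay modules over $A\circ_\psi B$ attached to the two rulings, via the equivalence in Proposition \ref{prop.C(A)}) and then conclude by dimension matching that $C(A\circ_\psi B)\cong M_2(k)\times M_2(k)$.
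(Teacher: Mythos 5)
Your first two steps are sound and genuinely different in flavor from the paper's argument: the Hilbert series computation $H_{(A\circ_\psi B)^!}(t)=(1+t)^3/(1-t)$, the filtration by $F_i=(A\circ_\psi B)^!_{2i}w^{-i}$ with quotients of dimension $\dim S^!_{2i}$ (valid because $w$ is regular, by Lemma \ref{lem.nord}), and the conclusion $\dim_k C(A\circ_\psi B)=8$ are all correct; and invoking Theorem \ref{thm-d4} (applicable since $A\circ_\psi B$ is noetherian by Theorem \ref{thm.rel}(ii)) together with Proposition \ref{prop.isol} to get semisimplicity is legitimate and not circular, since Theorem \ref{thm-d4} is proved independently via the densely graded Dade theorem. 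This reduces the problem to the three Wedderburn types $M_2(k)\times M_2(k)$, $M_2(k)\times k^4$, $k^8$, which is a reduction the paper does not make (the paper instead exhibits an explicit homomorphism $\rho$ on elements $t_1,\dots,t_7$ and verifies a full multiplication table, obtaining semisimplicity as a byproduct, as noted in the remark after the theorem).

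The genuine gap is your third step, which is exactly where the content of the theorem lies and which you do not carry out. Nothing in your proposal rules out $M_2(k)\times k^4$ or $k^8$: you describe an intention to compute $(YZ)\nu(YZ)$, $(XW)\nu(XW)$ and cross terms and assert that ``suitable linear combinations and scalar normalizations should then yield the required matrix units,'' but no idempotents or matrix units are actually produced, and it is not evident that the span of those particular elements suffices (the paper's generators are of the form $YWw^{-1}$, $YXw^{-1}$, $Y(a_{11}Z+a_{21}W)w^{-1}$, $W(b_{11}X+b_{21}Y)w^{-1}$, $WZw^{-1}$, $X(a_{11}Z+a_{21}W)w^{-1}$ and one element over $w^{-2}$, and verifying their relations requires the identity (\ref{eq.comm}) repeatedly). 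The fallback route is also unsubstantiated: to produce two non-isomorphic $2$-dimensional simple $C(A\circ_\psi B)$-modules from rank-one graded maximal Cohen--Macaulay modules ``attached to the two rulings'' you would first have to construct and classify such modules over the noncommutative quadric and identify their images under the equivalence of Proposition \ref{prop.C(A)}; in this setting the existence of exactly two such families is essentially equivalent to knowing $C(A\circ_\psi B)\cong M_2(k)\times M_2(k)$, so as stated this is close to assuming what is to be proved. To complete your argument you would need either the explicit computation of idempotents (in effect redoing the paper's table) or an independent computation of an invariant distinguishing the three cases, e.g.\ the dimension of the center of $C(A\circ_\psi B)$ or the number of its simple modules.
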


\begin{proof}
The map $\rho : C(A\circ_{\psi} B) \to M_2(k) \times M_2(k)$ defined by
\begin{align*}
&1 &&\mapsto &&\left(\begin{pmatrix} 1&0 \\ 0&1 \end{pmatrix}, \begin{pmatrix} 1&0 \\ 0&1 \end{pmatrix}\right),\\
&t_1=a_{22}YWw^{-1} &&\mapsto &&\left(\begin{pmatrix} 0&1 \\ 0&0 \end{pmatrix}, \begin{pmatrix} 0&0 \\ 0&0 \end{pmatrix}\right),\\
&t_2=a_{11}YXw^{-1} &&\mapsto &&\left(\begin{pmatrix} 0&0 \\ 0&0 \end{pmatrix}, \begin{pmatrix} 0&1 \\ 0&0 \end{pmatrix}\right),\\
&t_3=Y(a_{11}Z+a_{21}W)w^{-1} &&\mapsto &&\left(\begin{pmatrix} 1&0 \\ 0&0 \end{pmatrix}, \begin{pmatrix} 1&0 \\ 0&0 \end{pmatrix}\right),\\
&t_4=W(b_{11}X+b_{21}Y)w^{-1}  &&\mapsto &&\left(\begin{pmatrix} -1&0 \\ 0&0 \end{pmatrix}, \begin{pmatrix} 0&0 \\ 0&-1 \end{pmatrix}\right),\\
&t_5=b_{11}WZw^{-1}  &&\mapsto &&\left(\begin{pmatrix} 0&0 \\ 0&0 \end{pmatrix}, \begin{pmatrix} 0&0 \\ -1&0 \end{pmatrix}\right),\\
&t_6=X(a_{11}Z+a_{21}W)w^{-1}  &&\mapsto &&\left(\begin{pmatrix} 0&0 \\ -1&0 \end{pmatrix}, \begin{pmatrix} 0&0 \\ 0&0 \end{pmatrix}\right),\\
&t_7=a_{11}^2a_{22}b_{22}^{-1}YXWZw^{-2}  &&\mapsto &&\left(\begin{pmatrix} 0&0 \\ 0&0 \end{pmatrix}, \begin{pmatrix} -1&0 \\ 0&0 \end{pmatrix}\right)
\end{align*}
is an algebra homomorphism because the multiplication table of $\{ 1,t_2, t_3,t_4,t_5,t_6,t_7\}$ given below coincides with the multiplication table of the set of the corresponding elements.
\begin{center}
\begin{tabular}{ c|c c c c c c c c}
   & $1$ & $t_1$ & $t_2$ & $t_3$ & $t_4$ & $t_5$ & $t_6$ & $t_7$\\
\hline
$1$ &$1$ & $t_1$ & $t_2$ & $t_3$ & $t_4$ & $t_5$ & $t_6$ & $t_7$\\
%\hline
$t_1$ & $t_1$ & $0$ & $0$ & $0$ & $0$ & $0$ & $-t_3-t_7$ & $0$ \\
%\hline
$t_2$ & $t_2$ & $0$ & $0$ & $0$ & $-t_2$ & $t_7$ & $0$ & $0$ \\
%\hline
$t_3$ & $t_3$ & $t_1$ & $t_2$ & $t_3$ & $-t_3-t_7$ & $0$ & $0$ & $t_7$ \\
%\hline
$t_4$ & $t_4$ & $-t_1$ & $0$ & $-t_3-t_7$ & $-t_4$ & $-t_5$ & $0$ &$0$ \\
%\hline
$t_5$ & $t_5$ & $0$ & $t_3+t_4+t_7$ & $t_5$ & $0$ & $0$ & $0$ &$-t_5$ \\
%\hline
$t_6$  & $t_6$ &$-1-t_4-t_7$ & $0$ & $t_6$ & $-t_6$ & $0$ & $0$ & $0$ \\
%\hline
$t_7$ & $t_7$ & $0$ & $-t_2$ & $t_7$ & $0$ & $0$ & $0$ &$-t_7$ \\
%\hline
\end{tabular}
\end{center}
For example,
\begin{align*}
t_1^2&= a_{22}^2YWw^{-1}YWw^{-1}
=-a_{22}b_{22}WYw^{-1}YWw^{-1}\\
&=-a_{22}b_{22}WY\nu(Y)\nu(W)w^{-2}
=-\frac{a_{22}b_{22}^2}{a_{11}}WY^2\nu(W)w^{-2}=0,
\end{align*}
\begin{align*}
t_1t_6&=a_{22}YWw^{-1}X(a_{11}Z+a_{21}W)w^{-1}
=-b_{22}WY\nu(X)\nu(a_{11}Z+a_{21}W)w^{-2}\\
&=-b_{22}WY(\frac{a_{11}b_{11}X+(a_{11}b_{21}+a_{21}b_{22})Y}{a_{11}a_{22}})\nu(a_{11}Z+a_{21}W)w^{-2}\\
&=-\frac{b_{11}b_{22}}{a_{22}}WYX\nu(a_{11}Z+a_{21}W)w^{-2}
=b_{11}YWX\nu(a_{11}Z+a_{21}W)w^{-2}\\
&=Y(-w-a_{22}XW-b_{21}WY)\nu(a_{11}Z+a_{21}W)w^{-2}\\
&=-Yw\nu(a_{11}Z+a_{21}W)w^{-2}-a_{22}YXW\nu(a_{11}Z+a_{21}W)w^{-2}\\
&=-Y(a_{11}Z+a_{21}W)w^{-1}-a_{22}YXW(\frac{a_{11}^2}{b_{22}}Z)w^{-2}=-t_3-t_7,
\end{align*}
\begin{align*}
t_2t_4&=a_{11}YXw^{-1}W(b_{11}X+b_{21}Y)w^{-1}=
a_{11}YXw^{-1}(-w- a_{22}XW)w^{-1}\\
&=a_{11}YXw^{-1}(-1- a_{22}XWw^{-1})
=-a_{11}YXw^{-1}-a_{11}a_{22}YXw^{-1}XWw^{-1}\\
&=-a_{11}YXw^{-1}-a_{11}a_{22}YX\nu(X)\nu(W)w^{-2}\\
&=-a_{11}YXw^{-1}-a_{11}a_{22}YX(\frac{a_{11}b_{11}X+(a_{11}b_{21}+a_{21}b_{22})Y}{a_{11}a_{22}})\nu(W)w^{-2}\\
&=-a_{11}YXw^{-1}-a_{11}Y(\frac{a_{11}b_{11}a_{21}YX-(a_{11}b_{21}+a_{21}b_{22})a_{11}YX}{a_{11}a_{22}})\nu(W)w^{-2}\\
&=-a_{11}YXw^{-1}=-t_2.
\end{align*}
The rest of the cases can be calculated in the similar way.
Clearly $\rho$ is a bijection.
\end{proof}

\begin{rem}
By Theorems \ref{thm-d4} and \ref{thm.rel}, we see that the twisted Segre product $A\circ_{\psi} B$ of $A=k[u,v]$ and $B=k[x,y]$ with respect to a diagonal twisting map $\psi$ is a noncommutative graded isolated singularity.
Combining of Theorem \ref{thm-c} and Proposition \ref{prop.isol} yields another proof of this fact.
\end{rem}

By Theorem \ref{thm-c} and Proposition \ref{prop.C(A)}, we obtain the following conclusion.

\begin{cor}\label{cor-sing}
Let $A=k[u,v], B=k[x,y]$, and let $\psi$ be as in the beginning of this subsection.
Then there exists an equivalence of triangulated categories
\[\uCM^{\ZZ}(A\circ_{\psi} B) \overset{\cong}{\longrightarrow} \Db(\mod k \times k).\]
\end{cor}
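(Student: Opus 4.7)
The plan is to chain together the three main results already established in this subsection. First, by Theorem \ref{thm.rel}(ii), the twisted Segre product $A\circ_{\psi} B$ is a noncommutative quadric surface; that is, it has the form $S/(f)$ for a $4$-dimensional noetherian Koszul AS-regular algebra $S$ and a regular normal element $f \in S_2$. This is precisely the setting in which Proposition \ref{prop.C(A)} applies, yielding an equivalence of triangulated categories
\[
\uCM^{\ZZ}(A\circ_{\psi} B) \overset{\cong}{\longrightarrow} \Db(\mod C(A\circ_{\psi} B)).
\]

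Second, Theorem \ref{thm-c} identifies $C(A\circ_{\psi} B)$ with $M_2(k) \times M_2(k)$. The only remaining step is the observation that the algebras $M_2(k) \times M_2(k)$ and $k \times k$ are Morita equivalent (each is a finite product of matrix algebras over $k$ with the same number of simple factors). Morita equivalence of algebras induces an equivalence $\mod M_2(k) \times M_2(k) \simeq \mod k \times k$ of abelian categories, which in turn descends to an equivalence of bounded derived categories
\[
\Db(\mod C(A\circ_{\psi} B)) \cong \Db(\mod M_2(k) \times M_2(k)) \cong \Db(\mod k \times k).
\]

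Composing the two equivalences yields the desired triangulated equivalence. All of the technical content has already been done in Theorem \ref{thm.rel} (realizing $A\circ_{\psi} B$ as a noncommutative quadric surface), in Lemmas \ref{lem.Kos} and \ref{lem.nord} (computing $(A\circ_{\psi} B)^!$ and identifying the normal element $w$), and in Theorem \ref{thm-c} (the explicit identification of $C(A\circ_{\psi} B)$ with $M_2(k) \times M_2(k)$ by hand). Thus no genuine obstacle remains at this stage; the corollary is essentially a formal consequence, and the whole of the real work was dispatched in the verification of the multiplication table for the homomorphism $\rho$ in the proof of Theorem \ref{thm-c}.
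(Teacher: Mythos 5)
Your proposal is correct and follows essentially the same route as the paper: the paper derives the corollary directly from Theorem \ref{thm-c} and Proposition \ref{prop.C(A)} (with Theorem \ref{thm.rel}(ii) guaranteeing the quadric-surface hypothesis), the Morita equivalence between $M_2(k)\times M_2(k)$ and $k\times k$ being left implicit there and merely spelled out in your write-up.
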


\section*{Acknowledgments}
The authors thank the referees for their valuable suggestions and comments.
J.-W. He was supported by NSFC No.~11971141.
K. Ueyama was supported by JSPS KAKENHI No.~JP18K13381 and JP22K03222.

% % % % % % % % % % % % % % % % % % % % % % % % % % % %
% % % % % % % % % % % % % % % % % % % % % % % % % % % %

\end{document}